\newtheorem{theorem}{Theorem}
\numberwithin{theorem}{section}
\newtheorem{proposition}[theorem]{Proposition}
\newtheorem{lemma}[theorem]{Lemma}
\newtheorem{corollary}[theorem]{Corollary}
\theoremstyle{definition}
\newtheorem{definition}[theorem]{Definition}
\newtheorem{example}[theorem]{Example}
\theoremstyle{remark}
\newtheorem{remark}[theorem]{Remark}
\newcommand{\GT}{\mathrm{GT}}
\newcommand{\PD}{\mathrm{PD}}
\newcommand{\RR}{\mathbb{R}}
\newcommand{\CC}{\mathbb{C}}
\newcommand{\ZZ}{\mathbb{Z}}
\newcommand{\KK}{\mathbb{K}}
\newcommand{\capac}{\mathrm{cap}}
\newcommand{\supp}{\mathrm{supp}}
\newcommand{\SO}{\operatorname{SO}}
\newcommand{\SL}{\operatorname{SL}}
\newcommand{\GL}{\operatorname{GL}}
\newcommand{\tr}{\mathrm{tr}}
\newcommand{\cp}{\mathrm{cp}}
\newcommand{\GSL}{G_{\SL}}
\newcommand{\mlt}{\mathrm{mlt}}
\newcommand{\mltb}{\mathrm{mlt_b}}
\newcommand{\T}{^\mathsf{T}}
\newcommand{\St}{\mathrm{Stab}}
\newcommand{\exSymbol}{$\diamondsuit$}
\title[Invariant theory and scaling algorithms for ML estimation]{Invariant theory and scaling algorithms \\ for maximum likelihood estimation}
\author[Carlos Am\'{e}ndola, Kathl\'{e}n  Kohn, Philipp Reichenbach, Anna Seigal]{{\normalsize Carlos Am\'{e}ndola, Kathl\'{e}n  Kohn, Philipp Reichenbach, Anna Seigal}}
\begin{document}

\begingroup
\let\MakeUppercase\relax 
\maketitle
\endgroup
\vspace*{-0.5cm}
\begin{abstract}
We uncover connections between maximum likelihood estimation in statistics and norm minimization over a group orbit in invariant theory. 
We focus on Gaussian transformation families, which include
matrix normal models and Gaussian graphical models given by transitive directed acyclic graphs.
We use stability under group actions to characterize boundedness of the likelihood, and existence and uniqueness of the maximum likelihood estimate. 
Our approach reveals promising consequences of the interplay between invariant theory and statistics. 
In particular, existing scaling algorithms from statistics can be used in invariant theory, and vice versa.
\end{abstract}

\section{Introduction}

The task of fitting data to a model is fundamental in statistics. A statistical model is a set of probability distributions.
We seek a point in a model that best fits some empirical data. 
A widespread approach is to maximize the likelihood of observing the data as we range over the model. A point that maximizes the likelihood is called a \emph{maximum likelihood estimate} (MLE). There are several ways to compute an MLE for different statistical models, usually via optimization approaches that find a local maximum~\cite{MLEoptimization,MLEtutorial}.
There is growing interest in understanding when algorithms to find an MLE are guaranteed to work, and under which conditions an MLE exists or is unique. 
In this paper, we approach such questions using invariant theory. 

Invariant theory studies actions of groups on vector spaces or, more generally, on algebraic varieties.
An important concept is the orbit of a point under the group action, which is the set of all points that differ from the original point by a transformation in the group.
The \emph{capacity} of a point is the infimal norm along its orbit.
If the orbit is closed, the capacity is attained; otherwise the capacity is attained only on the orbit closure.
Points with zero capacity are called unstable;
they form the null cone, a classical object in invariant theory dating back to Hilbert~\cite{hilbert},
which is of particular interest for moduli spaces of algebraic objects.
More recently, algorithmic approaches to stability questions have been taken, with a special focus on testing  null cone membership~\cite{Allen-Zhu,PeterAvi1, PeterAvi3, DerksenMakam, GGOWoperatorScaling, IQS}. 
A number of applied problems have been cast within an invariant theoretic framework,  including questions in quantum information theory, complexity theory and analytic inequalities, see e.g.~\cite[\S1.2]{PeterAvi3}. 

There is a close connection between statistical models and group actions, dating back to Fisher \cite{fisher1934}. 
We build a bridge between invariant theory and maximum likelihood estimation. 
In this paper, we study this connection in the setting of multivariate Gaussian models.
 We define \emph{Gaussian group models}, multivariate Gaussian models whose concentration matrices are of the form $g\T g$, where $g$ lies in a group. 
 Examples of Gaussian group models are matrix normal models and Gaussian graphical models defined by transitive directed acyclic graphs.

The connection between invariant theory and maximum likelihood estimation also holds for discrete statistical models, as we discuss in our companion paper~\cite{companion}. 
There, we show that 
maximum likelihood estimation in log-linear models is equivalent to computing the capacity under a torus action.
Both Gaussian group models and log-linear models fall within the framework of exponential families.

\subsection*{Main contributions} 
We show that finding the MLE can be cast as the problem of computing the capacity, see
 Propositions~\ref{prop:twoStepGeneral} and~\ref{prop:twoStepGeneralComplex}.
Viewing maximum likelihood estimation as a norm minimization problem allows us to build a correspondence between notions of stability from invariant theory and MLE properties:
\[
\left\{ \begin{matrix}
\text{unstable} \\ 
\text{semistable} \\ 
\text{polystable} \\ 
\text{stable} 
\end{matrix} \right\} 
\qquad \longleftrightarrow \qquad 
\left\{ \begin{matrix}
\text{likelihood unbounded from above} \\ 
\text{likelihood bounded from above} \\ 
\text{MLE exists} \\ 
\text{MLE exists uniquely} 
\end{matrix} \right\} 
\]
For some models we prove an exact equivalence between the four notions of stability on the left and the four properties of the MLE on the right, see Theorem~\ref{thm:MLEversusStabilityComplex} for complex Gaussian group models.
For real statistical models, we prove real analogues of the correspondence that hold at two levels of generality; see Theorems~\ref{thm:gaussianMLEstable} and~\ref{thm:MLEversusStabilityReal}.
The two levels of generality correspond to non-reductive and reductive groups. 

While invariant theory often focuses on reductive groups, Gaussian group models are natural to study in both settings.
For matrix normal models, which are given by reductive groups, 
we use descriptions of the null cone to give improved bounds on the number of samples generically required for a bounded likelihood function, see Theorem~\ref{thm:nullconeFills} and Corollary~\ref{cor:newMLEbound}.
Gaussian models defined by transitive directed acyclic graphs are in general given by non-reductive groups.
For such models, our results translate to exact conditions for MLE existence in terms of linear independence of the rows of the sample matrix, see Theorem~\ref{thm:tdagnullcone}.

Our connection between invariant theory and maximum likelihood estimation leads to the algorithmic consequences that we detail below.

\begin{figure}[htbp]
\centering
\begin{tikzpicture}[
roundnode/.style={ellipse, draw=black, thick, minimum size=10mm},
squarednode/.style={rectangle, draw=black, thick, minimum size=7mm},
description/.style={rectangle, thick, minimum size=5mm},
]
\node[roundnode] (sl){$\GL_{m_1} \times \GL_{m_2}$};
\node[roundnode] (g)[right=4cm of sl]{$G$};
\node[squarednode] (operator)[above=of sl] {operator scaling};
\node[squarednode] (flip) [below =of sl] {flip-flop algorithm};
\node[squarednode] (null) [above=of g] {$\qquad$norm minimization$\qquad$};
\node[squarednode] (ips) [below=of g] {IPS for Gaussian group models};
\node[description] (left) [above=0.3cm of operator] {Left-right action};
\node[description] (torus) [above=0.3cm of null] {General group action};
\node[description] (inv) [left=0.5cm of operator] {Invariant Theory:};
\node[description] (stat) [left=0.5cm of flip] {Statistics:};

\draw[thick] (sl.north) -- (operator.south);
\draw[thick] (sl.south) -- (flip.north);
\draw[thick] (g.south) -- (ips.north);
\draw[thick] (g.north) -- (null.south);
\draw[<->, dashed, thick] (flip.north east) to[bend right] (operator.south east);
\draw[->, thick] (operator.east) -- (null.west);
\draw[->, thick] (flip.east) -- (ips.west);
\end{tikzpicture}

\caption{Overview of different scaling algorithms.  For the invariant theory algorithms, we use matrices of determinant one, e.g. $\SL_{m_1} \times \SL_{m_2} \subseteq \GL_{m_1} 
\times \GL_{m_2}$.}
    \label{fig:algorithms}
\end{figure}
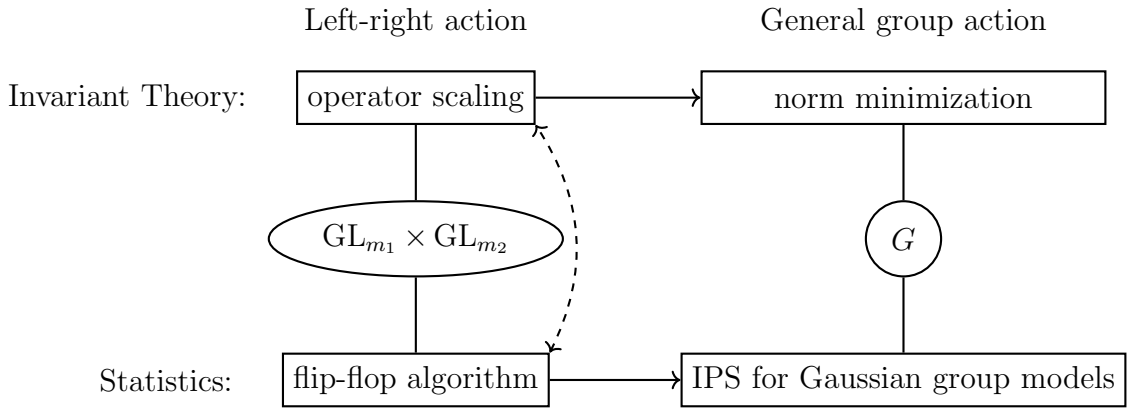

\subsection*{Algorithmic implications}
Scaling algorithms are iterative algorithms existing both in statistics and in invariant theory. They are characterized by update steps, which are given by a group action in many instances.
For matrix normal models, we show the equivalence of two alternating algorithms: 
operator scaling from invariant theory for null cone membership testing \cite{GurvitsOperatorScaling, GGOWoperatorScaling}, and the flip-flop algorithm from statistics for maximum likelihood estimation \cite{dutilleul1999mle,lu2005likelihood}; see the left of Figure~\ref{fig:algorithms} and Section~\ref{sec:matrixnormalalg}.
This equivalence enables us to obtain a 
complexity analysis for the flip-flop algorithm (see Theorem~\ref{thm:flipflopcomplexity}) by directly adapting the result for the corresponding null cone membership problem from~\cite[Theorem~1.1]{PeterAvi1}.

We now describe how this can be extended to more general scaling algorithms, see the right hand side of Figure~\ref{fig:algorithms}.
The flip-flop algorithm can be thought of as an instance of {\em iterative proportional scaling (IPS)} (or iterative proportional fitting (IPF)), a family of methods to find the MLE in a statistical model~\cite{IPFienberg,GaussianIPF}.
For Gaussian group models, 
we can find an MLE via the geodesically convex optimization approaches from~\cite{PeterAvi3} that minimize the norm over an orbit.
These algorithms can be thought of as generalizations of operator scaling. We therefore regard them as IPS for Gaussian group models.
Properties (such as complexity or efficiency) of scaling algorithms for testing stability translate, under our correspondence, to properties of the corresponding IPS algorithm for finding the MLE.

The connection between norm minimization in invariant theory and IPS in statistics is discussed for torus actions and discrete models in our companion paper~\cite{companion}.
There, \cite[Figure~4]{companion} gives the analogue of Figure~\ref{fig:algorithms} for the setting of a discrete model and a torus action (rather than a Gaussian model and a general group action). The starting point of both Figures is Sinkhorn scaling~\cite{sinkhorn1964}, an alternating method that involves the left-right action of a product of two tori. The alternating idea from Sinkhorn's scaling generalizes to products of groups, e.g. to operator scaling and the flip-flop algorithm in Figure~\ref{fig:algorithms}.

We see that algorithms in invariant theory can be used in maximum likelihood estimation, and vice versa. 
In statistics, many iterative algorithms for finding the MLE are well-known. 
It is a more recent question to understand when they converge, i.e. when an MLE exists, and when convergence is to a unique solution, i.e. when the MLE is unique. 
The historical progression is the opposite in invariant theory: the distinction between different types of stability is classical, while more recent approaches use algorithms to test instability. 
Our results are intended to stimulate further research to deepen the connection between the fields.\footnote{Since the preprint of this paper first appeared, our dictionary between ML estimation and invariant theory has been used to obtain ML thresholds in two families of multivariate Gaussian models: matrix normal models (see Section~\ref{sec:matrixnormal}) and their higher-order generalization, tensor normal models (see Example~\ref{ex:matrixnormal})~\cite{derksen2020amaximum,derksen2020bmaximum}.}

\subsection*{Organization} To address readers with different backgrounds, we present preliminaries from invariant theory and statistics in Section~\ref{sec:prelim}. We consider the general setting of a Gaussian group model in Section~\ref{sec:generalG}. 
We then study matrix normal models in Section~\ref{sec:matrixnormal}, followed by transitive directed acyclic graphs, in Section~\ref{sec:tdag}.

\section{Preliminaries} \label{sec:prelim}

\subsection{Maximum likelihood estimation}
\label{sec:MLEprelim}
A statistical model is a set of probability distributions. In this paper we consider multivariate Gaussian distributions with mean zero.
The density function of an $m$-dimensional Gaussian 
with mean zero and covariance matrix $\Sigma$~is 
\begin{align*}
    f_\Sigma(y) = \frac{1}{\sqrt{\det( 2 \pi \Sigma)}}
    \exp \left( -\frac{1}{2} y\T \Sigma^{-1} y \right),
\end{align*}
where $y \in \RR^m$ and $\Sigma$ is in the cone of $m \times m$ positive definite matrices, which we denote by $\PD_m$. 
We often consider the concentration matrix $\Psi = \Sigma^{-1}$. A Gaussian model is determined by a set of concentration matrices, i.e. a subset of $\PD_m$.

A maximum likelihood estimate (MLE) is a point in the model that maximizes the likelihood of observing some data $y = (y_1, \dots, y_n)$, where $n$ is the sample size. That is, an MLE maximizes the \emph{likelihood function}
\begin{equation}\label{eq:likelihood}
   L_y(\theta) = f_\theta(y_1) \cdots f_\theta(y_n), 
\end{equation}
where the model is parametrized by $\theta \in \Theta$. It is often convenient to work with the \emph{log-likelihood function} $\ell_y = \log L_y$, which has the same maximizers.

For Gaussian models $\mathcal{M} \subseteq \PD_m$, the data is a tuple $Y = (Y_1, \ldots, Y_n) \in (\RR^m)^n$. The likelihood function \eqref{eq:likelihood} is
\[L_Y(\Psi) = \prod_{i=1}^n f_{\Psi^{-1}}(Y_i).\]
The log-likelihood function can be written, up to additive and multiplicative constants, as
\begin{equation}\label{eqn:gaussianlikelihood}
    \ell_{Y} (\Psi) = \log \det (\Psi) - \mathrm{tr} (\Psi S_Y),
\end{equation}
where $S_Y = \frac{1}{n} \sum_{i=1}^n Y_i Y_i\T$ is the sample covariance matrix, an $m\times m$ positive semi-definite matrix. It is well-known that the unique maximizer of the likelihood over the positive definite cone is $\hat{\Psi} = S_Y^{-1}$, if $S_Y$ is invertible. If $S_Y$ is not invertible, the likelihood function is unbounded and the MLE does not exist, see \cite[Proposition~ 5.3.7]{Sullivant}. 

The minimum number of samples needed for an MLE to generically exist is the \emph{maximum likelihood threshold} ($\mlt$) of a model. The minimum number of samples needed for the likelihood to be generically bounded is denoted by $\mltb$. By \emph{generically}, we mean that a property holds away from an algebraic hypersurface. Hence, it will hold almost surely, i.e. outside of a set of Lebesgue measure zero. As an example, the discussion above says that $\mlt = \mltb = m$ when the Gaussian model is the full positive definite cone, $\mathcal{M} = \PD_m$.

\subsection{Invariant theory}
\label{subsec:InvariantTheory}

This section gives a friendly guide to our invariant theory setting, following~\cite{Wallach}.
We explain how our seemingly special setting fits into  usual terminology  of invariant theory in Remark~\ref{rem:invTheory}.

Invariant theory studies actions of a group $G$ and notions of stability with respect to this action. In this article we work with linear actions on a real or complex vector space. Such a linear action corresponds to a representation $\varrho \colon G \to \GL_m(\KK)$, i.e. each group element $g \in G$ is assigned an invertible matrix in $\GL_m(\KK)$ where $\KK$ is $\RR$ or $\CC$. The group element $g \in G$ acts on $\KK^m$ by left multiplication with the matrix $\varrho(g)$. For a vector $v \in \KK^m$, we define the capacity to be $\capac(v) := \inf_{g \in G} \| g \cdot v \|^2$.
Here and throughout the paper $\Vert \cdot \Vert$ denotes the Euclidean norm for vectors and Frobenius norm for matrices.
We now define the four notions of stability for such an action.

\begin{definition}
\label{def:StabilityNotions}
Let $v \in \KK^m$. We denote the orbit of $v$ by $G \cdot v$, the orbit closure with respect to the Euclidean topology by $\overline{G \cdot v}$ and the stabilizer of $v$ by $G_v$. We say $v$ is
    \begin{itemize}\itemsep 3pt
        \item[(a)] \emph{unstable}, if $0 \in \overline{G \cdot v}$, i.e. $\capac(v) = 0$.
        \item[(b)] \emph{semistable}, if $0 \notin \overline{G \cdot v}$, i.e. $\capac(v) > 0$.
        \item[(c)] \emph{polystable}, if $v \neq 0$ and $G \cdot v$ is closed.
        \item[(d)] \emph{stable}, if $v$ is polystable and $G_v$ is finite.
    \end{itemize}
The set of unstable points is called the \emph{null cone} of the group action.
\end{definition}

The orbit and orbit closure of $v$ only depend on the group $\varrho(G)$. Thus, when studying the notions from Definition~\ref{def:StabilityNotions}(a)--(c) we can assume $G \subseteq \GL_m$ after restricting to the image of $\varrho$. 
We call $G \subseteq \GL_m$ \emph{Zariski closed} if $G$ is the zero locus of a set of polynomials in the matrix entries. 
The transpose of $g \in G$ is denoted by $g\T$ and the Hermitian transpose by $g^\ast$.
We say that a group $G$ is \emph{self-adjoint} if $g \in G$ implies $g\T \in G$ (for $\KK = \RR$), or if $g \in G$ implies $g^\ast \in G$ (for $\KK = \CC$).

Next, we introduce the moment map and state the Kempf-Ness theorem, a crucial ingredient for many of our results.
We consider $G \subseteq \GL_m(\KK)$, a Zariski closed and self-adjoint subgroup.
For each vector $v \in \KK^m$, we study the map
\begin{equation*}
    \gamma_v \colon G \longrightarrow \RR, \quad
    g \longmapsto \Vert g  v \Vert^2,
\end{equation*}
and note that the infimum of $\gamma_v$ is the capacity of $v$.
Since $G$ is defined by polynomial equations, we can consider its tangent space $T_{I_m} G \subseteq \KK^{m \times m}$ at the identity matrix $I_m$, 
and we can compute the differential of the map $\gamma_v$ at the identity:
\begin{equation*}
    D_{I_m} \gamma_v \colon 
    T_{I_m} G
    \longrightarrow \RR, \quad
    \dot g \longmapsto  2 \, \mathrm{Re} [ \tr (\dot{g} v v^{\ast}) ].
\end{equation*}
The \emph{moment map} $\mu$ assigns this differential to each vector $v$, i.e.
\begin{equation*}
    \mu \colon \KK^m \longrightarrow \mathrm{Hom}_{\RR}( T_{I_m} G, \RR ), \quad
    v \longmapsto D_{I_m} \gamma_v.
\end{equation*}
The moment map vanishes at a vector $v$ if and only if the identity matrix $I_m$ is a critical point of the map $\gamma_v$.
Now we are ready to formulate the Kempf-Ness theorem, which is due to \cite{KempfNess} for $\KK=\CC$. The first proof for $\KK = \RR$ was given in \cite{RichardsonSlodowy}.

\begin{theorem}[Kempf-Ness]
\label{thm:KempfNess}
Let $G \subseteq \GL_m(\KK)$ be a Zariski closed self-adjoint subgroup with moment map $\mu$, where $\KK \in \lbrace \RR, \CC \rbrace$.
If $\KK=\RR$, let $K$ be the set of orthogonal matrices in $G$. 
If $\KK = \CC$, let $K$ be the set of unitary matrices in $G$.
For $v \in \KK^m$, we have:
    \begin{itemize} \itemsep 3pt
        \item[(a)] The vector $v$ is of minimal norm in its orbit if and only if $\mu(v)=0$.
        
        \item[(b)] If $\mu(v)=0$ and $w \in G \cdot v$ is such that $\|v\| = \| w \|$, then $w \in K \cdot v$.
        
        \item[(c)] If the orbit $G \cdot v$ is closed, then there exists some $w \in G \cdot v$ with $\mu(w)=0$.
        
        \item[(d)] If $\mu(v)=0$, then the orbit $G \cdot v$ is closed.
        
        \item[(e)] The vector $v$ is polystable if and only if there exists $0 \neq w \in G \cdot v$ with  $\mu(w)=0$.
        
        \item[(f)] The vector $v$ is semistable if and only if there exists $0 \neq w \in \overline{G \cdot v}$ with $\mu(w)=0$.
    \end{itemize}
\end{theorem}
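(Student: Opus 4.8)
The plan is to build everything on one analytic fact. The self-adjointness of $G$ gives a Cartan decomposition of its Lie algebra $T_{I_m}G = \mathfrak{k}\oplus\mathfrak{p}$, where $\mathfrak{k}$ is the (anti-Hermitian) Lie algebra of $K$ and $\mathfrak{p}$ consists of the Hermitian elements, together with a polar decomposition $G = K\cdot\exp(\mathfrak{p})$. The role of $\mathfrak{p}$ is that for Hermitian $H$ the curve $t\mapsto \exp(tH)$ is a geodesic along which $\gamma_v$ is convex: diagonalizing $H=UDU^\ast$ with real eigenvalues $\lambda_i$ and setting $c=U^\ast v$ gives $\|\exp(tH)v\|^2=\sum_i |c_i|^2 e^{2t\lambda_i}$, a sum of convex exponentials whose derivative at $t=0$ is $2\sum_i|c_i|^2\lambda_i=2\,\mathrm{Re}\,\tr(Hvv^\ast)=D_{I_m}\gamma_v(H)$. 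First I would record these two facts; everything else is extracting consequences.

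For (a), if $v$ has minimal norm then $I_m$ minimizes $\gamma_v$, so $D_{I_m}\gamma_v=0$, i.e. $\mu(v)=0$. Conversely, given $\mu(v)=0$ and any $g\in G$, write $g=k\exp(H)$; since $k\in K$ preserves the norm, $\|gv\|^2=\|\exp(H)v\|^2$, and the convex function $t\mapsto\|\exp(tH)v\|^2$ has a critical point at $t=0$, hence a global minimum there, so $\|gv\|^2\ge\|v\|^2$. For (b), if in addition $\|gv\|=\|v\|$ then this convex function attains its minimum at both $t=0$ and $t=1$, forcing it to be constant; reading off the exponential expansion, $c_i=0$ whenever $\lambda_i\neq0$, so $Hv=0$, $\exp(H)v=v$, and $gv=kv\in K\cdot v$.

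For (c), on a closed orbit a minimizing sequence for the norm is bounded, hence has a limit, which lies in the orbit by closedness and realizes $\capac(v)$; being of minimal norm, it satisfies $\mu=0$ by (a). The hard direction is (d): assuming $\mu(v)=0$, I would take $g_nv\to w$, polar-decompose $g_n=k_n\exp(H_n)$, and show the Hermitian exponents $H_n$ stay bounded modulo the stabilizer, after which compactness of $K$ and a subsequence produce $g\in G$ with $gv=w\in G\cdot v$. The boundedness is the technical heart: because $\mu(v)=0$, the function $\gamma_v$ grows along every geodesic direction transverse to the stabilizer, so if $\|H_n\|\to\infty$ one extracts a limiting direction $\hat H\in\mathfrak{p}$ along which $\|\exp(tH_n)v\|^2$ would blow up, contradicting $\|\exp(H_n)v\|^2=\|g_nv\|^2\to\|w\|^2$. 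Making ``transverse to the stabilizer'' precise — splitting off the part of $H_n$ that annihilates $v$, where $\exp$ does not factor through a product — is exactly where the argument needs the most care, and I expect this properness statement to be the main obstacle.

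Finally, (e) and (f) are assembled from (a)--(d) together with the fact that the capacity is attained on $\overline{G\cdot v}$. For (e): if $v$ is polystable then $0\notin G\cdot v$ (as each $g$ is invertible and $v\neq0$), so (c) yields $0\neq w\in G\cdot v$ with $\mu(w)=0$; conversely such a $w$ has closed orbit by (d), and $G\cdot w=G\cdot v$ is then closed with $v\neq0$. For (f): if $v$ is semistable the minimizer of the norm over $\overline{G\cdot v}$ is a nonzero $w$, which is then of minimal norm in its own orbit and satisfies $\mu(w)=0$ by (a); conversely, given $0\neq w\in\overline{G\cdot v}$ with $\mu(w)=0$, part (d) makes $G\cdot w$ a nonzero closed orbit inside $\overline{G\cdot v}$, and the standard fact that an orbit closure contains a unique closed orbit forces it to be the minimal-norm orbit, so $\capac(v)=\|w\|^2>0$ and $0\notin\overline{G\cdot v}$.
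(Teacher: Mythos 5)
You have set out to give a self-contained proof of a theorem that the paper itself does not prove but imports from the literature (parts (a)--(d) are cited to Wallach, and (f) to the uniqueness of the closed orbit in an orbit closure, with separate references for $\RR$ and $\CC$). Your convexity-along-geodesics argument is the standard and correct route to (a), (b) and (c): the polar decomposition $G=K\exp(\mathfrak{p})$, the formula $\|\exp(tH)v\|^2=\sum_i|c_i|^2e^{2t\lambda_i}$, and the identification of its derivative at $t=0$ with $\mu(v)(H)$ do give the ``critical point $\Rightarrow$ global minimum'' implication and the rigidity statement in (b). Those parts are fine.

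The genuine gap is part (d), and you have correctly diagnosed it yourself: the claim that the $H_n$ in $g_n=k_n\exp(H_n)$ stay bounded (modulo the stabilizer) when $g_nv$ converges is not a consequence of the convexity fact alone, and the difficulty you name --- that $\exp$ does not split along the decomposition of $\mathfrak{p}$ into the part annihilating $v$ and a complement --- is exactly where Kempf--Ness and Richardson--Slodowy invest the real work (via properness of the norm function on the symmetric space $K\backslash G$ modulo $K\backslash KG_v$, or a Hilbert--Mumford-type reduction to one-parameter subgroups). As written, (d) is a plan rather than a proof, and since your (e) and the backward direction of (f) both rest on (d), they inherit the gap. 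A secondary point: the backward direction of (f) also invokes ``an orbit closure contains a unique closed orbit'' as a standard fact; over $\CC$ this is classical, but over $\RR$ it requires a nontrivial hypothesis on the group, which the paper explicitly verifies by appealing to the polar decomposition for Zariski closed self-adjoint groups. If you want a complete argument you should either carry out the properness estimate in (d) in detail or, as the authors do, cite it; everything else in your write-up is sound.
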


\begin{proof}
Parts (a)--(d) are \cite[Theorems~3.26 and 3.28]{Wallach} while part~(e) is a direct consequence of (c) and (d). Part~(f) follows from the fact that any orbit closure $\overline{G \cdot v}$ contains a unique closed orbit, which is not the zero orbit if and only if $v$ is semistable. For $\KK = \CC$ this can be found in \cite[Theorem~3.20]{Wallach} and for $\KK = \RR$ we refer to \cite[Section~9.3]{RichardsonSlodowy} or \cite[Theorem~1.1(iii)]{RealGIT}. For the latter, note that \cite[Condition~(1)]{RealGIT} is satisfied in our setting by \cite[Theorem~2.16]{Wallach}.

The assumption that $G$ is connected, which appears in \cite[Theorem~3.26]{Wallach}, is not needed here, by the following argument.
If $G^\circ$ is the identity component of $G$, then the quotient group $G/G^\circ$ is finite and its elements can be represented by unitary matrices, by the polar decomposition \cite[Theorem~2.16]{Wallach}.
Hence (a)-(f) above depend only on $G^\circ$.
\end{proof}

The following result relates the capacity over $\CC$ to the capacity over $\RR$.

\begin{proposition}\label{prop:realVsComplexCapacity}
Let $G_{\RR}$ be a Zariski closed self-adjoint subgroup of $\GL_m(\RR)$ and denote by $G_{\CC}$ its Zariski closure in $\GL_m(\CC)$. Let $\capac_{\KK}(v)$ be the capacity of $v \in \KK^m$ under $G_{\KK}$ and denote the null cone under left multiplication with $G_{\KK}$ by $\mathcal{N}_{\KK}$. Then, for $v \in \RR^m$, we have the equality of capacities $\capac_{\RR}(v) = \capac_{\CC}(v)$.
In particular, $\mathcal{N}_{\RR} = \mathcal{N}_{\CC} \cap \RR^m$.
\end{proposition}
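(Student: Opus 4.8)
The plan is to prove the two inequalities $\capac_{\CC}(v) \le \capac_{\RR}(v)$ and $\capac_{\RR}(v) \le \capac_{\CC}(v)$ separately and then read off the null cone statement from the resulting equality. The first inequality is immediate: since $G_{\RR} \subseteq G_{\CC}$ and the complex Euclidean norm of $g v$ agrees with the real one whenever $g$ and $v$ are real, the complex infimum is taken over a superset, so $\capac_{\CC}(v) \le \capac_{\RR}(v)$. The substance is the reverse inequality, for which I would first record a symmetry. Because $G_{\RR}$ is a real subset, the ideal of polynomials vanishing on it is generated by real polynomials, so its Zariski closure $G_{\CC}$ is cut out by real equations and is therefore stable under entrywise complex conjugation $g \mapsto \bar g$. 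Since $G_{\RR}$ is also closed under transpose and transpose is a polynomial automorphism, $G_{\CC}$ is closed under $g \mapsto g\T$ as well, and combining the two shows that $G_{\CC}$ is self-adjoint; hence the complex polar decomposition applies to it.

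Next I would reduce each capacity to a linear optimization over a symmetric space. By the polar decomposition for Zariski closed self-adjoint groups (\cite[Theorem~2.16]{Wallach}), every $g \in G_{\KK}$ factors as a norm-preserving (orthogonal/unitary) part times $\exp(X)$, with $X$ in the symmetric/Hermitian part $\mathfrak{p}_{\KK}$ of $T_{I_m} G_{\KK}$. Thus $\|g v\|^2 = v^\ast (g^\ast g) v$ depends only on $P := g^\ast g$, and $\capac_{\KK}(v) = \inf_{P \in \mathcal P_{\KK}} v^\ast P v$, where $\mathcal P_{\KK} = \exp(\mathfrak p_{\KK})$ is the associated totally geodesic submanifold of positive definite matrices. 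A direct computation with the conjugation-stable Lie algebra gives $\mathfrak p_{\CC} = \mathfrak p_{\RR} \oplus i\,\mathfrak k_{\RR}$, where $\mathfrak k_{\RR}$ is the antisymmetric part of $T_{I_m} G_{\RR}$; consequently a real matrix in $\mathcal P_{\CC}$ has a real symmetric logarithm lying in $\mathfrak p_{\RR}$, so it already belongs to $\mathcal P_{\RR}$.

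For the reverse inequality I would argue via a conjugation-symmetric geodesic midpoint. Fix $\epsilon > 0$ and choose $P_0 \in \mathcal P_{\CC}$ with $v^\ast P_0 v \le \capac_{\CC}(v) + \epsilon$. Since $v$ is real and $v^\ast P_0 v$ is real, its conjugate $\overline{P_0} \in \mathcal P_{\CC}$ satisfies $v^\ast \overline{P_0} v = v^\ast P_0 v$. Complex conjugation is an isometry of the symmetric space of positive definite matrices that interchanges $P_0$ and $\overline{P_0}$, hence fixes the midpoint $M$ of the unique geodesic joining them; thus $M$ is real and, as $\mathcal P_{\CC}$ is totally geodesic, $M \in \mathcal P_{\CC}$. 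The objective is geodesically convex: along the geodesic $P(s) = P_0^{1/2}\exp(sQ)P_0^{1/2}$ one has $v^\ast P(s) v = \sum_k c_k e^{s\lambda_k}$ with $c_k \ge 0$, which is log-convex and hence convex. Evaluating at the midpoint gives $v^\ast M v \le \tfrac12\bigl(v^\ast P_0 v + v^\ast \overline{P_0} v\bigr) = v^\ast P_0 v \le \capac_{\CC}(v) + \epsilon$. Since $M$ is real and lies in $\mathcal P_{\CC}$, it lies in $\mathcal P_{\RR}$, so $\capac_{\RR}(v) \le v^\ast M v \le \capac_{\CC}(v) + \epsilon$; letting $\epsilon \to 0$ yields $\capac_{\RR}(v) \le \capac_{\CC}(v)$, and hence equality.

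The equality of capacities gives the null cone claim at once: for $v \in \RR^m$ we have $v \in \mathcal N_{\RR} \Leftrightarrow \capac_{\RR}(v) = 0 \Leftrightarrow \capac_{\CC}(v) = 0 \Leftrightarrow v \in \mathcal N_{\CC}$, i.e. $\mathcal N_{\RR} = \mathcal N_{\CC} \cap \RR^m$. The main obstacle I anticipate is the geometric core of the reverse inequality: verifying that the conjugation-fixed geodesic midpoint is simultaneously real and contained in $\mathcal P_{\CC}$, together with the geodesic convexity of the linear functional $P \mapsto v^\ast P v$, since this is exactly where the symmetric-space structure and the self-adjointness of $G$ must be combined. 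By contrast, the algebraic identity $\mathfrak p_{\CC} = \mathfrak p_{\RR} \oplus i\,\mathfrak k_{\RR}$ and the descent of real points of $\mathcal P_{\CC}$ into $\mathcal P_{\RR}$ should be routine.
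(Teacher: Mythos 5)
Your proof is correct, but it takes a genuinely different route from the paper's. The paper's argument is short and leans on heavier machinery: it first notes $G_{\CC}$ is self-adjoint (citing \cite[Lemma~3.29]{Wallach}), uses Kempf--Ness to reduce to the closed orbit inside $\overline{G_{\KK}\cdot v}$ where the capacity is attained, and then invokes \cite[Lemma~3.31]{Wallach} (equivalently \cite[Lemma~8.1]{RichardsonSlodowy}) to conclude that a vector of minimal norm in a closed real orbit is already of minimal norm in the complex orbit. You instead reprove the substance of that cited lemma from scratch: after reducing $\capac_{\KK}(v)$ to the linear functional $P \mapsto v^\ast P v$ on $\exp(\mathfrak p_{\KK})$ via polar decomposition, you exploit that complex conjugation is an isometry of $\PD_m$ fixing the geodesic midpoint of $P_0$ and $\overline{P_0}$, and that the functional is geodesically convex, to produce a \emph{real} point of $\exp(\mathfrak p_{\CC})$ (hence of $\exp(\mathfrak p_{\RR})$, by your Lie-algebra decomposition $\mathfrak p_{\CC} = \mathfrak p_{\RR} \oplus i\,\mathfrak k_{\RR}$) whose value is at most that of $P_0$. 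What your approach buys: it works with $\epsilon$-approximate minimizers, so it never needs the infimum to be attained and thus avoids Kempf--Ness entirely, and it makes visible the geodesic-convexity mechanism that underlies the scaling algorithms discussed later in the paper. What it costs: you must supply (or cite) the facts that $\exp(\mathfrak p)$ is a totally geodesic submanifold of $\PD_m$ for a Zariski closed self-adjoint group (Cartan's Lie-triple-system criterion, via $[\mathfrak p,\mathfrak p]\subseteq\mathfrak k$ and $[\mathfrak k,\mathfrak p]\subseteq\mathfrak p$) and that $T_{I_m}G_{\CC} = T_{I_m}G_{\RR}\otimes_{\RR}\CC$ for the Zariski closure; you correctly flag the former as the crux, and both are standard, so there is no gap. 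The trivial inequality $\capac_{\CC}(v)\le\capac_{\RR}(v)$, the self-adjointness of $G_{\CC}$ via conjugation- and transpose-stability, and the deduction $\mathcal N_{\RR}=\mathcal N_{\CC}\cap\RR^m$ are all fine.
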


\begin{proof}
The group $G_{\CC} \subseteq \GL_m(\CC)$ is self-adjoint by \cite[Lemma~3.29]{Wallach}. The capacity 
$\capac_{\KK}(v)$ is attained at all elements of minimal norm in the closed orbit contained in $\overline{G_{\KK} \cdot v}$, by Kempf-Ness.
Hence we can reduce to studying a closed orbit $G_{\RR} \cdot v$.
If $w$ is of minimal norm in $G_{\RR} \cdot v$, then it is of minimal norm in $G_{\CC} \cdot v$ by \cite[Lemma~3.31]{Wallach} or \cite[Lemma~8.1]{RichardsonSlodowy}. Thus, $G_{\CC} \cdot w$ is closed by Kempf-Ness and hence $\|w\|^2 = \capac_{\CC}(v)$.
\end{proof}

\begin{remark}
\label{rem:invTheory}
We relate our special setting to the usual setting from invariant theory, where one considers a linearly reductive group $G$ over $\KK \in \lbrace \RR, \CC \rbrace$.
For such a group, any finite dimensional rational representation $\varrho \colon G \to \GL(V)$ over $\KK$ on a vector space $V$  is semisimple (also called fully reducible),
i.e. the representation decomposes into irreducible representations. 
Moreover, $\varrho(G) \subseteq \GL(V)$ is a closed algebraic subgroup, see e.g. \cite[Theorem~5.39]{MilneAlgGroups}.
Hence, there exists an inner product on $V$ such that $\varrho(G) \subseteq \GL(V)$ is self-adjoint, see \cite[Theorem~7.1]{Mostow} for $\KK = \CC$ and \cite[Theorem~7.2]{Mostow} for $\KK = \RR$.
\end{remark}

\section{Gaussian Group models}
\label{sec:generalG}

We construct Gaussian models from representations $G \to \GL(V)$ of a group $G$ on a real vector space $V$.
This extends the idea that log-linear models are orbits of the action by a torus, which is utilized in~\cite{companion}.
Our construction only depends on the image of the group $G$ inside $\GL(V)$.
We view each group element as an $m \times m$ invertible matrix by fixing an isomorphism $V \cong \RR^m$.
The \emph{Gaussian group model} given by $G$
is the multivariate Gaussian model consisting of all distributions of mean zero whose concentration matrices lie in the set
\[ \mathcal{M}_G = \{ g\T g \mid g \in G \}.\]
 Equivalently, we take $\mathcal{M}_G$ to be the model consisting of distributions whose covariance matrices are of the form $g g\T$. This is an instance of a \emph{transformation family}, a statistical model on which a group acts transitively, see \cite{exptransfmod}. Our construction includes familiar examples of statistical models.

\begin{example}
\label{ex:standardGaussian}
When $G$ is the general linear group $\GL(V)$, every concentration matrix lies in  $\mathcal{M}_G$ and we get a standard multivariate Gaussian of dimension $\dim(V)$, see Section~\ref{sec:MLEprelim}.
\hfill\exSymbol
\end{example}

\begin{example}
\label{ex:diagonalGaussian}
When $G$ is the torus of diagonal matrices $\GT(V)$, the concentration matrices $g\T g$ are also diagonal and the Gaussian group model consists of $\dim(V)$ independent univariate Gaussian variables.
\hfill\exSymbol
\end{example}

Given two matrices $A_k \in \RR^{m_k \times m_k}$, 
the Kronecker product $A_1 \otimes A_2$ is a an $m_1 m_2 \times m_1 m_2$ matrix. Its rows are indexed by $(i_1,i_2)$, and its columns by $(j_1, j_2)$, where the indices $i_k$ and $j_k$ range from $1$ to $m_k$. The entry of $A_1 \otimes A_2$ at index $((i_1,i_2),(j_1,j_2))$ is $(A_1)_{i_1 j_1} (A_2)_{i_2 j_2}$. 

\begin{example}\label{ex:matrixnormal}
Consider the subset of $\GL_{m_1 m_2}$ given by the image of
\begin{align*}
\GL_{m_1} \times \GL_{m_2} &\longrightarrow \GL_{m_1m_2} \\
(g_1, g_2) &\longmapsto g_1 \otimes g_2.
\end{align*}
The concentration matrices in the Gaussian group model are those of the form
\[ (g_1 \otimes g_2)\T (g_1 \otimes g_2) = g_1\T g_1 \otimes g_2\T g_2, \]
a Kronecker product of an $m_1 \times m_1$ concentration matrix and an $m_2 \times m_2$ concentration matrix.
These Gaussian group models are known as matrix normal models, which we discuss in detail in Section~\ref{sec:matrixnormal}.
This setting can be extended to tensor normal models  under the map
 $(g_1, \ldots, g_d) \mapsto g_1 \otimes \cdots \otimes g_d$.
\hfill\exSymbol
\end{example}

We discuss further examples in the context of directed graphical models in Section~\ref{sec:tdag}.
Now, we describe maximum likelihood estimation for the Gaussian group model  given by $G$. 

The log-likelihood function is
\begin{equation}
\label{eq:generalLogLikelihood}
    \ell_Y(\Psi) = \log \det (\Psi) - \tr(\Psi S_Y),
\end{equation}
where $S_Y = \frac{1}{n} \sum_{i=1}^n Y_i Y_i\T$ is the sample covariance matrix, see~\eqref{eqn:gaussianlikelihood}. An MLE is a concentration matrix in $\mathcal{M}_G$ that maximizes the log-likelihood.

Next, we describe how finding the MLE relates to finding the capacity of the tuple~$Y$.
A consequence of our results is that algorithms to find the capacity can be used to find the MLE in Gaussian group models. 
For example, we can apply methods described in~\cite{PeterAvi3} to the settings of Theorems~\ref{thm:MLEversusStabilityReal} and~\ref{thm:MLEversusStabilityComplex}.

\subsection{Equivalence of optimization problems}
\label{subsec:equivalenceOptimization}

We compare the maximization of the log-likelihood to the minimization of the norm $\| g \cdot Y \|^2$ where $Y$ is a tuple of samples and $g$ is an element of the group. The action of the group $G$ on the tuple $Y = (Y_1, \ldots, Y_n) \in V^n$ is given by $ g \cdot Y
= (g  Y_1, \ldots, g  Y_n)$, i.e. when considering the action on $V^n$ the group $G$ is diagonally embedded in $\GL(V^n)$.
We can rewrite the norm as 
\begin{equation}
\label{eqn:Gnormtrace}
    \Vert g \cdot Y \Vert^2
    = \sum_{i=1}^n (g Y_i)\T g Y_i
    = n \, \tr (g\T g S_Y).
\end{equation}

We compare this expression for the norm with the log-likelihood in~\eqref{eq:generalLogLikelihood}. The term appearing with $S_Y$ in the trace is $\Psi$ in the log-likelihood and $g\T g$ in the norm. This explains our choice to let the Gaussian group model  consist of distributions with concentration matrix $g\T g \in \mathcal{M}_G$.

Combining the expressions for the norm and the log-likelihood, we see that maximizing the log-likelihood over concentration matrices in the model $\mathcal{M}_G$ is equivalent to minimizing 
\begin{equation*}
    -\ell_Y (g\T g) = 
    \frac{1}{n} \, \Vert g \cdot Y \Vert^2 - \log \det (g\T g)
\end{equation*}
over $g \in G$.
We show that this minimization can be done in two steps.
First, we minimize the norm over the subgroup $\GSL^{\pm}$, consisting of matrices in $G$ of determinant $\pm 1$. Then, we find the scalar multiple of this matrix that minimizes the overall expression. For this, we require that the group $G$ is closed under non-zero scalar multiples.

\begin{proposition} 
\label{prop:twoStepGeneral}
Let $Y \in V^n$ be a tuple of samples.
If the group $G \subseteq \GL(V)$ is closed under non-zero scalar multiples, 
the supremum of the log-likelihood~\eqref{eq:generalLogLikelihood} over $\mathcal{M}_G$ is the double infimum
\begin{equation*}
-  \inf_{\lambda \in \RR_{>0}}
\left(
\frac{\lambda}{n}
\left(
\inf_{h \in \GSL^{\pm}} \Vert h \cdot Y \Vert^2
\right)
-  \dim(V) \log \lambda
\right).
\end{equation*}
\\
The MLEs, if they exist, are the matrices $\lambda h\T h$, where $h$ minimizes $\| h \cdot Y\|$
under the action of $\GSL^{\pm}$ on $V^n$, and $\lambda \in \RR_{>0}$ is the unique value minimizing the outer infimum.
\end{proposition}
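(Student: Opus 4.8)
The plan is to reparametrise the minimisation of $-\ell_Y(g\T g)$ over $g \in G$ by peeling off a radial (scalar) factor. Because $G$ is closed under non-zero scalar multiples, every $g \in G$ may be written as $g = \mu h$ with $\mu = \lvert \det g \rvert^{1/\dim(V)} \in \RR_{>0}$ and $h = \mu^{-1} g \in \GSL^{\pm}$; conversely, for every $\mu \in \RR_{>0}$ and $h \in \GSL^{\pm}$ the element $\mu h$ lies in $G$. Thus, as $g$ ranges over $G$, the objective takes exactly the same set of values as it does over pairs $(\mu, h) \in \RR_{>0} \times \GSL^{\pm}$, and an infimum is attained at some $g$ if and only if it is attained at the corresponding pair. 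It therefore suffices to rewrite the objective in these coordinates.

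First I would substitute $g = \mu h$ into $-\ell_Y(g\T g) = \tfrac1n \Vert g \cdot Y \Vert^2 - \log\det(g\T g)$. Since the action is by linear maps, $\Vert \mu h \cdot Y \Vert^2 = \mu^2 \Vert h \cdot Y \Vert^2$; and $\det(g\T g) = \mu^{2\dim(V)} (\det h)^2 = \mu^{2\dim(V)}$, where the decisive point is that $(\det h)^2 = 1$ precisely because $h$ has determinant $\pm 1$ --- this is exactly why the first step is taken over $\GSL^{\pm}$. Writing $\lambda = \mu^2 \in \RR_{>0}$, the objective becomes $\tfrac{\lambda}{n}\Vert h \cdot Y \Vert^2 - \dim(V)\log\lambda$.

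Next I would separate the resulting double infimum. For fixed $\lambda$ the coefficient $\lambda/n$ is a positive constant, so minimising over $h$ affects only the first term and produces $\tfrac{\lambda}{n}\bigl(\inf_{h \in \GSL^{\pm}} \Vert h \cdot Y \Vert^2\bigr) - \dim(V)\log\lambda$; taking the infimum over $\lambda$ then yields the displayed expression, and the supremum of the log-likelihood is its negative.

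For the description of the MLEs, set $c = \inf_{h \in \GSL^{\pm}} \Vert h \cdot Y \Vert^2$ and analyse $\phi(\lambda) = \tfrac{\lambda c}{n} - \dim(V)\log\lambda$. As $\phi''(\lambda) = \dim(V)/\lambda^2 > 0$, the function $\phi$ is strictly convex; when $c > 0$ it is coercive and has a unique minimiser $\lambda^\ast = n\dim(V)/c$, whereas when $c = 0$ it is unbounded below, so the likelihood is unbounded above and no MLE exists. Using $\tfrac{\lambda}{n}\Vert h\cdot Y\Vert^2 - \dim(V)\log\lambda \ge \phi(\lambda) \ge \phi(\lambda^\ast)$, I would conclude that a minimising pair exists if and only if $c > 0$ and some $h$ realises $c$, and that the minimisers are exactly the pairs $(\lambda^\ast, h)$ with $\Vert h \cdot Y \Vert^2 = c$, giving the MLEs $\lambda^\ast h\T h$; replacing $h$ by $-h$ changes neither $\Vert h\cdot Y\Vert$ nor $h\T h$, so the sign ambiguity in $\GSL^{\pm}$ is harmless. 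I expect the only genuinely delicate point to be this attainment bookkeeping: the outer minimiser $\lambda^\ast$ is unique as soon as $c > 0$, but the MLE exists only under the further condition that the norm over $\GSL^{\pm}$ be attained --- the very distinction that feeds into the later stability correspondence.
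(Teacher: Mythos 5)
Your proposal is correct and follows essentially the same route as the paper's proof: decompose $g = \tau h$ with $\tau \in \RR_{>0}$ and $h \in \GSL^{\pm}$, substitute $\lambda = \tau^2$ to get $\tfrac{\lambda}{n}\Vert h \cdot Y \Vert^2 - \dim(V)\log\lambda$, and separate the two infima. The only cosmetic differences are that you exchange the infima directly (rather than invoking monotonicity of $C \mapsto \log C + 1$) and record the explicit minimiser $\lambda^\ast = n\dim(V)/c$; the attainment bookkeeping you flag is handled the same way in the paper.
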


\begin{proof}
Maximizing $\ell_Y(\Psi)$ over $\mathcal{M}_G$ is equivalent to minimizing
\begin{align*}
    f \colon G &\to \RR \\ g &\mapsto
    \frac{1}{n} \| g \cdot Y \|^2 - \log \det (g\T g),
\end{align*}
since $f(g)$ only depends on the positive definite matrix $g\T g$.
We  write $g \in G$ as $g = \tau h$, where $\tau \in \RR_{> 0}$ and $h \in \GSL^{\pm}$.
Using $g\T g = \tau^2 h\T h$, and setting $\lambda := \tau^2$ and $m := \dim(V)$, we have
    \begin{align*}
        f(g) = \frac{\tau^2}{n} \| h \cdot Y \|^2 - \log \det(\tau^2 h\T h) 
        = \frac{\tau^2}{n} \| h \cdot Y \|^2 -  \log (\tau^{2 m}) 
        = \frac{\lambda}{n} \| h \cdot Y \|^2 - m \log (\lambda).
        \end{align*}
The minimum value of the function $ \lambda \mapsto \lambda C - \log(\lambda)$
is $\log(C)+1$ for $C > 0$, which increases as $C$ increases. 
Hence, to minimize $f$, we can first find the minimal norm in the orbit closure and then minimize the univariate function in $\lambda$, 
i.e.
    \begin{align*}
        \inf_{g \in G} f(g) 
        = \inf_{\lambda \in \RR_{>0} } \left( \frac{\lambda}{n} \left( \inf_{h \in \GSL^{\pm}} \| h \cdot Y \|^2 \right) - m \log \lambda \right).
    \end{align*}
Furthermore, an MLE is a matrix $\hat{\Psi}  \in \mathcal{M}_G$ that maximizes $\ell_Y(\Psi)$. Comparing $\ell_Y(\Psi)$ with the infimum in the claim, we see that the MLEs are all matrices $\hat{\Psi} = g\T g = \lambda h\T h$,
where $g = \sqrt{\lambda} h$, and $h$ and $\lambda$ minimize the inner and outer infima respectively.
\end{proof}

The group $\GSL^{\pm}$ may split into two parts:
$\GSL^+$ consisting of matrices in $G$ of determinant one, 
and $\GSL^-$ consisting of matrices of determinant $-1$. 
If we prefer to optimize over one part, say $\GSL^+$, we can compute the capacity of $Y$ under $\GSL^{\pm}$ by doing two minimizations.
A fixed matrix $h' \in \GSL^-$ gives a bijection between $\GSL^+$ and $\GSL^-$
via $h \mapsto h h'$. Hence
we can minimize $\Vert h \cdot Y \Vert$ over $\GSL^\pm$ by minimizing both $\Vert h \cdot Y \Vert$ and $\Vert h \cdot (h' \cdot Y) \Vert$ over $\GSL^+$. However, we can ignore neither $\GSL^+$ nor $\GSL^-$. The following is an example of a group, closed under non-zero scalar multiples, such that the norm $\Vert h \cdot Y \Vert$ can be attained at one but not the other.

\begin{example}
\label{ex:minimumOnNegativeComponent}
Let the group $G$ consist of non-zero scalar multiples of block-diagonal $6 \times 6$ matrices of the form
\begin{equation}
    \label{eqn:oxfordgroup}
    \begin{bmatrix} M & 0 & 0 \\ 0 & S_1 M S_1^{-1} & 0 \\ 0 & 0 & S_2 M S_2^{-1} \end{bmatrix} , \quad \text{where} \quad S_1 = \begin{bmatrix} 1 & 2 \\ 2 & 1 \end{bmatrix}, \quad
S_2 = \begin{bmatrix} 1 & 0 \\ 0 & 2 \end{bmatrix} ,
\end{equation} 
and $M \in O_2$ is an orthogonal $2 \times 2$ matrix. The component $\GSL^+$ consists of matrices in~\eqref{eqn:oxfordgroup} where $M$ is special orthogonal, while the component $\GSL^-$ consists of matrices in~\eqref{eqn:oxfordgroup} where $M$ is orthogonal with determinant $-1$. Note that although the group $G$ contains matrices of determinant $-1$, it does not contain any orthogonal matrices of determinant $-1$. 

The norm of $\Vert g \cdot Y \|$, for a tuple of samples $Y$, can be expressed in terms of the sample covariance matrix $S_Y$.
Consider the tuple of four samples given by
\[ Y = \begin{bmatrix} 0 & 0 & 0 & 0 \\ 0 & 0 & 0 & 0 \\ 2 & 0 & 0 & 0 \\ 0 & 2 \sqrt{2} & 0 & 0 \\ 0 & 0 & 0 & 2 \sqrt{5} \\ 0 & 0 & \frac{6 \sqrt{5}}{5} & \frac{8 \sqrt{5}}{5} \end{bmatrix}, \quad \text{with} \quad S_Y = \begin{bmatrix} 0 & 0 & 0 \\ 0 & S_2 & 0 \\ 0 & 0 & S_1^2 \end{bmatrix} .\]
The capacity problem can be rewritten as minimizing the trace $\tr(g\T g S_Y)$ over matrices $g \in \GSL^\pm$, by~\eqref{eqn:Gnormtrace}, to give
\[  \inf_{h \in \GSL^\pm}  \| h \cdot Y \|^2 = 4 \cdot \inf_{M \in O_2}  \left[ \tr \left( (S_1 M S_1^{-1})\T (S_1 M S_1^{-1}) S_2 \right) +  \tr \left(( S_2 M S_2^{-1})\T ( S_2 M S_2^{-1}) S_1^2 \right) \right].\]

We can parametrize the $2 \times 2$ special orthogonal matrices by $P$ and the $2 \times 2$ orthogonal matrices of determinant $-1$ by $Q$ where
\[ P = \begin{bmatrix} a & b \\ -b & a \end{bmatrix}, \qquad Q = \begin{bmatrix} -a & -b \\ -b & a \end{bmatrix}, \quad \text{with} \quad  a,b \in \RR, \quad \text{and} \quad a^2 + b^2 = 1. \]
 Then the minimization problems over $\GSL^+$ and $\GSL^-$ can be rewritten as
\[ \inf_{h \in \GSL^+} \frac{1}{4} \| h \cdot Y \|^2 = \min_{ a^2+b^2=1} \left( 13 a^2 - \frac{44}{3} ab + \frac{419}{12} b^2 \right), \]
\[ \inf_{h \in \GSL^-} \frac{1}{4} \| h \cdot Y \|^2 = \min_{ a^2+b^2=1} \left( \frac{71}{3} a^2- \frac{28}{3} ab + \frac{97}{4} b^2 \right) .\]
Note that both infima can only be attained for $a$ and $b$ having the same sign, because of the negative coefficients of $ab$; we assume $a,b \geq 0$.
Substituting $b = \sqrt{1-a^2}$ in the latter minimum, we see that
\[ \frac{71}{3} a^2 + \frac{97}{4} (1 - a^2) - \frac{28}{3} a \sqrt{1 - a^2} \geq \frac{97}{4} + \left( \frac{71}{3} - \frac{97}{4} \right) - 
\frac{28}{3} \cdot \frac{1}{2}  = 19.
\]
In contrast, setting $a = 1$ and $b=0$ in the former minimum gives a value of 13. 
Hence $ \inf_{h \in \GSL^+} \| h \cdot Y \|^2 < \inf_{h \in \GSL^-} \| h \cdot Y \|^2$. Multiplying $Y$ by a fixed matrix in $\GSL^-$ gives a tuple of samples where the strict inequality is reversed, and the minimum is attained only at the negative component $\GSL^-$.
\hfill\exSymbol
\end{example}

\subsection{Relating  stability  to  the  MLE}
\label{subsec:StabilityMLE}

We use Proposition~\ref{prop:twoStepGeneral} to prove the following correspondence between stability notions and MLE existence. 

\begin{theorem}
\label{thm:gaussianMLEstable}
Consider a tuple $Y \in V^n$ of samples, and a group $G \subseteq \GL(V)$ that is closed under non-zero scalar multiples.
The stability under the action of $\GSL^{\pm}$ on $V^n$ is related to ML estimation for the Gaussian group model $\mathcal{M}_G$ as follows.
\[ \begin{matrix} 
(a) & Y \text{ unstable} & \Leftrightarrow & \ell_Y \text{ not bounded from above} \\
(b) & Y \text{ semistable} & \Leftrightarrow & \ell_Y \text{ bounded from above} \\ 
(c) & Y \text{ polystable} & \Rightarrow & \text{MLE exists}
\end{matrix} \]
\end{theorem}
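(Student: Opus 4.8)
The plan is to read everything off Proposition~\ref{prop:twoStepGeneral} together with a one-variable analysis of its outer infimum. Set $m = \dim(V)$ and write $C := \frac{1}{n}\inf_{h \in \GSL^{\pm}} \|h \cdot Y\|^2 = \frac{1}{n}\capac(Y)$ for the normalized capacity of $Y$ under the action of $\GSL^\pm$ on $V^n$. By Proposition~\ref{prop:twoStepGeneral}, the supremum of $\ell_Y$ over $\mathcal{M}_G$ equals $-\inf_{\lambda \in \RR_{>0}} \phi(\lambda)$, where $\phi(\lambda) = \lambda C - m\log\lambda$. So the entire theorem reduces to understanding $\phi$ according to whether $C = 0$ or $C > 0$, and to tracking when the inner and outer infima are attained.

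First I would analyze $\phi$. If $C > 0$, then $\phi$ is strictly convex on $\RR_{>0}$ with $\phi(\lambda) \to +\infty$ as $\lambda \to 0^+$ and as $\lambda \to \infty$; its unique minimizer is $\lambda^\ast = m/C$ and the minimum value $\phi(\lambda^\ast) = m(1 + \log(C/m))$ is finite. Hence $\sup \ell_Y$ is finite, i.e. $\ell_Y$ is bounded from above. If instead $C = 0$, then $\phi(\lambda) = -m\log\lambda \to -\infty$ as $\lambda \to \infty$, so $\inf_\lambda \phi = -\infty$ and $\sup \ell_Y = +\infty$, i.e. $\ell_Y$ is not bounded from above.

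Next I would translate these two cases into the stability dictionary. By Definition~\ref{def:StabilityNotions}, $Y$ is unstable exactly when $\capac(Y) = 0$, i.e. $C = 0$, and semistable exactly when $C > 0$; the analysis of $\phi$ then yields parts (a) and (b) simultaneously, since semistability is the negation of instability and boundedness is the negation of unboundedness. For part (c), I would use that polystability means $Y \neq 0$ and that the orbit $\GSL^{\pm} \cdot Y \subseteq V^n$ is closed. Since every $h \in \GSL^\pm$ is invertible, $h \cdot Y = 0$ would force $Y = 0$, so the nonzero closed orbit does not contain $0$; hence $Y$ is semistable and $C > 0$. Moreover, because the orbit is closed, the infimum $\inf_{h} \|h \cdot Y\|^2$ is attained at a genuine element $h_0 \in \GSL^{\pm}$, the minimizer lying in the orbit itself rather than merely in its closure. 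Then $\lambda^\ast = m/C$ attains the outer infimum uniquely, and Proposition~\ref{prop:twoStepGeneral} identifies $\lambda^\ast h_0\T h_0 \in \mathcal{M}_G$ as an MLE.

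I expect the only genuinely load-bearing step to be this last one: the passage from \emph{positivity of the capacity} (semistability, boundedness of the likelihood) to \emph{existence of the minimizing $h$ as an actual group element} (MLE existence). This is precisely where closedness of the orbit, as opposed to mere positivity of the capacity, is essential: without it the optimal norm would be attained only in the orbit closure and need not correspond to any concentration matrix $h_0\T h_0 \in \mathcal{M}_G$. Everything else is the elementary calculus of $\phi$ and bookkeeping with the definitions.
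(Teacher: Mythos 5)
Your proposal is correct and follows essentially the same route as the paper: both reduce the theorem to Proposition~\ref{prop:twoStepGeneral}, read off (a) and (b) from whether the capacity $C$ is zero or positive via the outer infimum in $\lambda$, and obtain (c) by noting that a closed orbit of a nonzero point avoids the origin and that the norm attains its infimum on it, yielding an actual minimizer $h_0$ and hence the MLE $\lambda^\ast h_0\T h_0$. Your version merely spells out the elementary analysis of $\phi(\lambda)=\lambda C-m\log\lambda$ and the coercivity argument that the paper leaves implicit.
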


\begin{proof}
If $Y$ is unstable then $C := \inf_{h \in \GSL^{\pm}} \| h \cdot Y \|^2 = 0$. Hence the outer infimum from Proposition~\ref{prop:twoStepGeneral} equals $- \infty$, so the supremum of $\ell_Y$ is infinite. Conversely, if $Y$ is semistable, then $C > 0$ and thus the outer infimum from Proposition~\ref{prop:twoStepGeneral} is some real number and $\ell_Y$ is bounded from above. This gives parts (a) and (b).

If $Y$ is polystable, then the infimum $C > 0$ is attained for some $h \in \GSL^{\pm}$ and $\lambda h\T h$ is an MLE, where $\lambda \in \RR_{>0}$ minimizes the outer infimum in Proposition~\ref{prop:twoStepGeneral}.
\end{proof}

\begin{remark}
\label{rem:orthogonalMatricesNegativeDet}
Assume that $G$ contains an orthogonal matrix of determinant $-1$, say $o \in G$.
Then minimizing the norm $\Vert h \cdot Y \Vert$ over $\GSL^{\pm}$ is equivalent to minimizing it over $\GSL^+$.
Hence, in this case, 
Proposition~\ref{prop:twoStepGeneral} 
and Theorem~\ref{thm:gaussianMLEstable}
both hold for $\GSL^{+}$ as well as $\GSL^{\pm}$. 
This is because we can write
$g \in G$ as $g = \tau o h$, where $\tau \in \RR_{>0}$ and $h \in \GSL^+$, and then follow the computations in the proof of Proposition~\ref{prop:twoStepGeneral}.
\end{remark}

If we assume that our group $G$ is Zariski closed and self-adjoint, we can strengthen Theorem~\ref{thm:gaussianMLEstable} using the Kempf-Ness theorem over $\RR$.
These additional assumptions hold for Examples~\ref{ex:standardGaussian}, \ref{ex:diagonalGaussian}, and \ref{ex:matrixnormal}.
On the statistics side, self-adjointness implies that the set of concentration matrices in  $\mathcal{M}_G$ is equal to the set of covariance matrices in the model.

\begin{lemma}
\label{lem:OrthogonalMatrixNegativeDet}
Let $G \subseteq \GL(V)$ be a Zariski closed self-adjoint group, closed under non-zero scalar multiples. If there is an element of $G$ with negative determinant, then $G$ contains an orthogonal matrix of determinant $-1$. In particular, Proposition~\ref{prop:twoStepGeneral} and Theorem~\ref{thm:gaussianMLEstable} still hold after replacing $\GSL^{\pm}$ by $\GSL^+$.
\end{lemma}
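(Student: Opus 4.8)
The plan is to use the polar decomposition for self-adjoint Zariski closed groups, namely \cite[Theorem~2.16]{Wallach}, which is already invoked in the proof of Theorem~\ref{thm:KempfNess} and in Remark~\ref{rem:invTheory}. This result guarantees that every element of $G$ factors as a product of a positive definite matrix in $G$ and an orthogonal matrix in $G$; the essential point is that both factors lie in $G$, and not merely in $\GL(V)$.

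Concretely, I would take an element $g \in G$ with $\det(g) < 0$, provided by the hypothesis, and write its polar decomposition $g = p\,o$ with $p$ positive definite and $o$ orthogonal. Passing to determinants, $\det(p) > 0$ since $p$ is positive definite, so $\det(o) = \det(g)/\det(p)$ has the same sign as $\det(g)$, namely it is negative. As $o$ is orthogonal, $\det(o) \in \{+1,-1\}$, and therefore $\det(o) = -1$. Since $o \in G$, this exhibits the required orthogonal matrix of determinant $-1$.

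The main point to get right is that the orthogonal factor $o$ genuinely lies in $G$, which is where the two structural hypotheses enter. Because $G$ is self-adjoint we have $g\T \in G$, hence $g\,g\T \in G$; because $G$ is Zariski closed, its positive definite square root $p = (g\,g\T)^{1/2}$ lies in $G$, and then $o = p^{-1} g \in G$ as well. This is precisely the assertion of \cite[Theorem~2.16]{Wallach}, so in the write-up I would cite it directly rather than reprove it, and the remaining determinant computation is routine. I would also remark that the standing assumption that $G$ is closed under non-zero scalar multiples plays no role in this determinant argument; it is inherited from the ambient setting only so that the cited Proposition and Theorem apply.

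Finally, the ``in particular'' clause is immediate: once $G$ is known to contain an orthogonal matrix of determinant $-1$, we are exactly in the situation of Remark~\ref{rem:orthogonalMatricesNegativeDet}, which already records that Proposition~\ref{prop:twoStepGeneral} and Theorem~\ref{thm:gaussianMLEstable} continue to hold with $\GSL^{\pm}$ replaced by $\GSL^{+}$.
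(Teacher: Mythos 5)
Your proposal is correct and follows essentially the same route as the paper: both invoke the polar decomposition within $G$ from \cite[Theorem~2.16]{Wallach} (the paper writes $g = op$, you write $g = p\,o$, which is immaterial), conclude $\det(o) = -1$ from the sign of the determinant, and defer the ``in particular'' clause to Remark~\ref{rem:orthogonalMatricesNegativeDet}. Your added sketch of why the orthogonal factor lies in $G$ is just an unpacking of the cited theorem and does not change the argument.
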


\begin{proof}
Pick $g \in G$ with $\det(g) < 0$. Since $G$ is Zariski closed and self-adjoint, the polar decomposition can be carried out in $G$, by~\cite[Theorem~2.16]{Wallach}. 
In particular, there is an orthogonal $o \in G$ and a positive definite $p \in G$ such that $g = op$. Then $\det(g) < 0$ implies $\det(o) < 0$, i.e. $\det(o) = -1$. The second part of the claim follows from Remark~\ref{rem:orthogonalMatricesNegativeDet}.
\end{proof}

As a consequence of Lemma~\ref{lem:OrthogonalMatrixNegativeDet} we work with $\GSL^+$ (instead of $\GSL^{\pm}$) in the following.

\begin{proposition}
\label{prop:MLEsViaStabilizer}
Let $Y \in V^n$ be a tuple of samples, and $G \subseteq \GL(V)$ a Zariski closed self-adjoint group which is closed under non-zero scalar multiples. If $\lambda h\T h$ is an MLE given $Y$, with $h \in \GSL^+$ and $\lambda \in \RR_{>0}$, then all MLEs given $Y$ are of the form $g\T(\lambda h\T h)g$, where $g$ is in the $\GSL^+$-stabilizer of $Y$.
\end{proposition}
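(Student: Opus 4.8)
The plan is to reduce the statement to a minimal-norm comparison over $\GSL^+$ and then invoke part~(b) of the Kempf-Ness theorem (Theorem~\ref{thm:KempfNess}). By Lemma~\ref{lem:OrthogonalMatrixNegativeDet} we may work throughout with $\GSL^+$ rather than $\GSL^{\pm}$, so Proposition~\ref{prop:twoStepGeneral} tells us that every MLE has the form $\lambda' (h')\T h'$, where $h' \in \GSL^+$ attains the minimal norm $C := \inf_{k \in \GSL^+} \| k \cdot Y \|^2$ and $\lambda' \in \RR_{>0}$ is the unique minimizer of the outer infimum. Since both $C$ and the univariate outer function depend only on $Y$, the scalar is forced: $\lambda' = \lambda$. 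Thus it suffices to show that for any such minimizer $h'$ there is a $g$ in the $\GSL^+$-stabilizer of $Y$ with $(h')\T h' = g\T h\T h\, g$.

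First I would verify that $\GSL^+$ satisfies the hypotheses of Kempf-Ness. It is Zariski closed, being the intersection of the Zariski closed group $G$ with the hypersurface $\det = 1$, and it is self-adjoint: if $k \in \GSL^+$ then $k\T \in G$ by self-adjointness of $G$ and $\det(k\T) = 1$. These two properties persist under the diagonal embedding $\GSL^+ \hookrightarrow \GL(V^n)$ underlying the action $g \cdot Y = (g Y_1, \dots, g Y_n)$, whose orthogonal subgroup $K$ consists precisely of the orthogonal matrices in $\GSL^+$. Because $\lambda h\T h$ is an MLE, Proposition~\ref{prop:twoStepGeneral} shows that $h \cdot Y$ is of minimal norm in its orbit $\GSL^+ \cdot Y$, so part~(a) of Theorem~\ref{thm:KempfNess} yields $\mu(h \cdot Y) = 0$.

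The key step is the application of part~(b). For a second MLE $\lambda (h')\T h'$, the vector $h' \cdot Y$ lies in the same orbit $\GSL^+ \cdot Y$ and satisfies $\| h' \cdot Y \| = \| h \cdot Y \|$, since both norms equal $C$. Taking $v = h \cdot Y$, for which $\mu(v) = 0$, and $w = h' \cdot Y$, part~(b) produces an orthogonal matrix $o \in \GSL^+$ with $h' \cdot Y = o \cdot (h \cdot Y)$. Setting $g := h^{-1} o^{-1} h' \in \GSL^+$ gives $g \cdot Y = Y$, i.e. $g \in \St_{\GSL^+}(Y)$, and $h' = o h g$; using $o\T o = I$ we obtain $(h')\T h' = g\T h\T o\T o\, h g = g\T h\T h\, g$, hence $\lambda (h')\T h' = g\T (\lambda h\T h) g$, as claimed. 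For the reverse containment, note that for any $g$ in the stabilizer the element $hg$ again attains the minimal norm, so $g\T (\lambda h\T h) g = \lambda (hg)\T (hg)$ is itself an MLE, which yields the exact description of the set of MLEs.

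I expect the main obstacle to be the careful bookkeeping surrounding the invocation of Kempf-Ness part~(b): confirming that $\GSL^+$ and its diagonal image inherit the Zariski-closed and self-adjoint hypotheses, correctly identifying the orthogonal subgroup $K$, and translating the conclusion $w \in K \cdot v$ into a stabilizer element of $Y$ rather than of $h \cdot Y$. The scalar uniqueness furnished by Proposition~\ref{prop:twoStepGeneral} is what pins down $\lambda' = \lambda$ and should be stated explicitly rather than taken for granted.
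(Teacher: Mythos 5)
Your proposal is correct and follows essentially the same route as the paper: reduce via Proposition~\ref{prop:twoStepGeneral} to the statement that every MLE comes from a minimizer $h'$ of the norm over $\GSL^+$, apply Kempf-Ness~(b) to get an orthogonal $o$ with $h'\cdot Y = o\cdot(h\cdot Y)$, and set $g = h^{-1}o^{-1}h'$. Your extra bookkeeping (explicitly invoking part~(a) to establish $\mu(h\cdot Y)=0$ before using part~(b), and pinning down $\lambda'=\lambda$ via uniqueness of the outer minimizer) only makes explicit what the paper leaves implicit.
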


\begin{proof}
By Proposition~\ref{prop:twoStepGeneral} for $\GSL^+$, the matrix $h$ minimizes the norm of $Y$ under the action of $\GSL^+$ and hence so does $hg$ for any $g$ in the $\GSL^+$-stabilizer of $Y$.
Therefore, $\lambda (hg)\T hg = g\T(\lambda h\T h)g$ is another MLE. Conversely, by Proposition~\ref{prop:twoStepGeneral} any MLE is of the form $\lambda (h')\T h'$ with $h' \in \GSL^+$ such that
    \begin{equation*}
        \| h' \cdot Y \|^2 = \inf_{\tilde{h} \in \GSL^+} \| \tilde{h} \cdot Y \|^2 = \| h \cdot Y \|^2.
    \end{equation*}
Since $G \subseteq \GL(V)$ is Zariski closed and self-adjoint, $\GSL^+ \subseteq \GL(V)$ is Zariski closed and self-adjoint and so is its diagonal embedding into $\GL(V^n)$. Thus we can apply Kempf-Ness, Theorem~\ref{thm:KempfNess}(b). For the $\GSL^+$ action on $V^n$, there is an orthogonal matrix $o \in \GSL^+$ with $o \cdot (h \cdot Y) = h' \cdot Y$. Hence, $g := h^{-1} \, o^{-1} \, h'$ is in the $\GSL^+$-stabilizer of~$Y$ and using $h' = o h g$ we deduce $\lambda (h')\T h' = g\T (\lambda h\T h)g$.
\end{proof}

With these extra assumptions on the group $G$, we obtain a stronger version of Theorem~\ref{thm:gaussianMLEstable}.
Moreover, with these assumptions we are in the setting of~\cite{PeterAvi3}, 
so we can use their algorithmic methods to compute the capacity in order to find an MLE.
We discuss these connections to algorithms for matrix normal models in Section~\ref{sec:matrixnormal}.

\begin{theorem}
\label{thm:MLEversusStabilityReal}
Let $Y \in V^n$ be a tuple of samples, and 
 $G \subseteq \GL(V)$ a Zariski closed self-adjoint group that is closed under non-zero scalar multiples. 
The stability under the action of $\GSL^+$ on $V^n$ is related to ML estimation for the Gaussian group model $\mathcal{M}_G$ as follows.
\[ \begin{matrix}
(a) & Y \text{ unstable} & \Leftrightarrow & \ell_Y \text{ not bounded from above} & & \\
(b) & Y \text{ semistable} & \Leftrightarrow & \ell_Y \text{  bounded from above} & & \\ 
(c) & Y \text{ polystable} & \Leftrightarrow & \text{MLE exists} & & 
 \\
(d) & Y \text{ stable} & \Rightarrow & \text{finitely many MLEs exist} & \Leftrightarrow &
\text{ unique MLE exists} 
\end{matrix}
\]
\end{theorem}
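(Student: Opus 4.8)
The plan is to bootstrap from Theorem~\ref{thm:gaussianMLEstable} and upgrade it using the real Kempf--Ness theorem (Theorem~\ref{thm:KempfNess}) together with Proposition~\ref{prop:MLEsViaStabilizer}. By Lemma~\ref{lem:OrthogonalMatrixNegativeDet}, the additional hypotheses let me replace $\GSL^{\pm}$ by $\GSL^+$ throughout, so parts~(a) and~(b), together with the implication ``$Y$ polystable $\Rightarrow$ MLE exists'' in~(c), are immediate from Theorem~\ref{thm:gaussianMLEstable}. It remains to prove the reverse implication in~(c), the implication in~(d), and the equivalence in~(d).

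For the converse in~(c), I would run the two-step description of Proposition~\ref{prop:twoStepGeneral} (for $\GSL^+$) backwards. If an MLE exists, then $\ell_Y$ is bounded above, so $Y$ is semistable by~(b) and the capacity $C:=\inf_{h\in\GSL^+}\Vert h\cdot Y\Vert^2$ is strictly positive. A short scalar computation — comparing, for a minimizer $g_\ast=\sqrt{\lambda_\ast}\,h_\ast$ of $f(g)=\tfrac1n\Vert g\cdot Y\Vert^2-\log\det(g\T g)$, the value $f(g_\ast)$ with the exact minimum $m\bigl(1-\log\tfrac{mn}{C}\bigr)$ of $f$, where $m=\dim(V)$ — forces $\Vert h_\ast\cdot Y\Vert^2=C$, i.e. the capacity is attained on the orbit itself. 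Then $w:=h_\ast\cdot Y\neq 0$ has minimal norm in $\GSL^+\cdot Y$, so $\mu(w)=0$ by Theorem~\ref{thm:KempfNess}(a), and Theorem~\ref{thm:KempfNess}(e) yields polystability. The first implication in~(d) is then short: if $Y$ is stable, its $\GSL^+$-stabilizer $(\GSL^+)_Y$ is finite, so by Proposition~\ref{prop:MLEsViaStabilizer} the set of MLEs $\{g\T(\lambda h\T h)g : g\in(\GSL^+)_Y\}$ is finite.

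The crux is the equivalence ``finitely many MLEs $\Leftrightarrow$ unique MLE'' in~(d); one direction is vacuous, so I would prove the contrapositive of the other: if there are at least two MLEs, there are infinitely many. Given distinct MLEs $\hat\Psi_0,\hat\Psi_1\in\mathcal{M}_G$, I would connect them by the geodesic $\Psi(t)=\hat\Psi_0^{1/2}\bigl(\hat\Psi_0^{-1/2}\hat\Psi_1\hat\Psi_0^{-1/2}\bigr)^{t}\hat\Psi_0^{1/2}$ in the cone $\PD_m$. Two facts drive the argument. First, $\ell_Y$ is geodesically concave on $\PD_m$: along $\Psi(t)$ the term $\log\det\Psi(t)$ is affine, while $\tr(\Psi(t)S_Y)$ is convex, since after simultaneously diagonalizing it becomes a nonnegative combination of exponentials $d_i^{\,t}$; hence $\ell_Y(\Psi(t))\ge(1-t)\,\ell_Y(\hat\Psi_0)+t\,\ell_Y(\hat\Psi_1)$. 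Second, $\mathcal{M}_G$ is a totally geodesic submanifold of $\PD_m$, so $\Psi(t)\in\mathcal{M}_G$ for all $t$. Since $\hat\Psi_0$ and $\hat\Psi_1$ both attain the maximum of $\ell_Y$ over $\mathcal{M}_G$, geodesic concavity forces $\ell_Y(\Psi(t))$ to be maximal for every $t$, so every $\Psi(t)$ is an MLE. By Proposition~\ref{prop:MLEsViaStabilizer} all MLEs lie in the single congruence orbit of $\hat\Psi_0$ under the stabilizer, so this orbit is infinite, contradicting finiteness.

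I expect the total-geodesy statement to be the main obstacle and the step most in need of justification: it is exactly where self-adjointness of $G$ enters. Concretely, I would deduce it from the polar decomposition in $G$ (\cite[Theorem~2.16]{Wallach}) already used in Lemma~\ref{lem:OrthogonalMatrixNegativeDet}, which gives $\mathcal{M}_G=\{\exp(2X):X\in\mathfrak p\}$ for $\mathfrak p=\Lie(G)\cap\mathrm{Sym}_m$; since $\Lie(G)$ is closed under transpose, splitting it as $\mathfrak k\oplus\mathfrak p$ into skew and symmetric parts gives the bracket relations $[\mathfrak p,\mathfrak p]\subseteq\mathfrak k$ and $[\mathfrak k,\mathfrak p]\subseteq\mathfrak p$, so $\mathfrak p$ is a Lie triple system and $\exp(\mathfrak p)$ is totally geodesic in $\PD_m$. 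An alternative that sidesteps the manifold language is to phrase the whole uniqueness argument as geodesic convexity of $g\mapsto\Vert g\cdot Y\Vert^2$ on the symmetric space associated to $G$, but either way self-adjointness is the essential hypothesis, and verifying that the interpolating path genuinely stays inside $\mathcal{M}_G$ is where the care is required.
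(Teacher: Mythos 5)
Your proposal is correct, and for parts (a), (b), the converse of (c), and the first implication of (d) it follows essentially the paper's route (the paper phrases the converse of (c) slightly more tersely as ``the double infimum is attained,'' but your scalar computation is the same observation made explicit). Where you genuinely diverge is the crux of (d), the implication ``at least two MLEs $\Rightarrow$ infinitely many.'' The paper's argument is purely algebraic: it normalizes $Y$ to have minimal norm in its orbit, so that $\lambda I$ is an MLE and all MLEs are $\lambda g\T g$ for $g$ in the stabilizer $\St_Y$; it then uses self-adjointness of $\St_Y$ (\cite[Corollary~2.25]{Wallach}) to find, whenever $\St_Y$ contains a non-orthogonal element $h$, the positive definite element $h\T h \in \St_Y$ with an eigenvalue $\neq 1$, whose powers $(h\T h)^{2N}$ produce infinitely many distinct matrices in $\{g\T g : g \in \St_Y\}$. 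Your argument instead interpolates two distinct MLEs by the $\PD_m$-geodesic, uses geodesic concavity of $\ell_Y$ (affine $\log\det$ minus convex trace term --- your diagonalization argument is right), and invokes total geodesy of $\mathcal{M}_G = \exp(\mathfrak{p})$ via the Lie triple system property of $\mathfrak{p} = \Lie(G)\cap\mathrm{Sym}_m$. This is valid: the Cartan decomposition $G = K\exp(\mathfrak{p})$ from \cite[Theorem~2.16]{Wallach} gives $\mathcal{M}_G = \exp(\mathfrak{p})$, the bracket relations you state do hold, and since $\PD_m$ is a Hadamard manifold the unique ambient geodesic between two points of the complete connected totally geodesic submanifold $\exp(\mathfrak{p})$ stays inside it. You correctly locate self-adjointness as the essential hypothesis in both arguments (and indeed Example~\ref{ex:nonuniqueMLE}, where $\mathcal{M}_G$ is a disconnected pair of rays, shows your totally-geodesic step is exactly what fails without it). The trade-off: the paper's stabilizer argument is elementary and self-contained, needing only the spectral observation about powers of $h\T h$; yours imports standard symmetric-space machinery but is more conceptual, makes the geodesic convexity underlying the whole theory explicit, and connects directly to the geodesically convex optimization viewpoint of the scaling algorithms discussed later in the paper.
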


\begin{proof}
We recall that the action of $\GSL^+$ on $V^n$ is given by the diagonal embedding into $\GL(V^n)$, and that this turns $\GSL^+$ into a Zariski closed self-adjoint subgroup of $\GL(V^n)$ by the assumptions on $G \subseteq \GL(V)$.

By Theorem~\ref{thm:gaussianMLEstable}, it remains to prove the converse implication in (c) and condition~(d). If an MLE given $Y$ exists, then the log-likelihood function $\ell_Y$ is bounded from above and attains its maximum. Hence the double infimum from Proposition~\ref{prop:twoStepGeneral} is attained, and there exists $h \in \GSL^+$ such that $h \cdot Y$ has minimal norm in the orbit of $Y$ under $\GSL^+$. Hence
the orbit is closed by Kempf-Ness, Theorem~\ref{thm:KempfNess}(d), and $Y$ is polystable.

We now prove condition (d).
If $Y$ is stable, its stabilizer $\St_Y$ is finite.
Then there are only finitely many MLEs given $Y$, by Proposition~\ref{prop:MLEsViaStabilizer}.
It remains to show that a tuple $Y$ cannot have finitely many MLEs unless it has a unique MLE.
A tuple $Y$ with finitely many MLEs is polystable, by condition (c).
Moreover, we can relate the stabilizers of $Y$ and $h \cdot Y$ by $\St_{h \cdot Y} = h \, \St_Y \, h^{-1}$.
Combining Propositions~\ref{prop:twoStepGeneral} and~\ref{prop:MLEsViaStabilizer},  we can relate the MLEs given $Y$ to the MLEs given $h \cdot Y$ via 
    \begin{align*}
        \lbrace \text{MLEs given } h \cdot Y \rbrace = \left( h^{-1} \right)\T \lbrace \text{MLEs given } Y \rbrace h^{-1}.
    \end{align*}
Hence, to study the stabilizer and MLE of a polystable $Y$ we can assume that $Y$ is of minimal norm in its orbit under $\GSL^+$.
One of the MLEs given $Y$ is then $\lambda I$, where $\lambda > 0$ minimizes the outer infimum in Proposition~\ref{prop:twoStepGeneral}, and $I$ is the identity matrix of size $\dim(V)$. 

We show that the set $\{ g\T g \mid g \in \St_Y \}$ is either the identity matrix or infinite.
This implies that $Y$ either has a unique MLE or infinitely many MLEs, because the MLEs given $Y$ are the matrices $g\T(\lambda I\T I)g = \lambda g\T g$, where $g \in \St_{Y}$, by Proposition~\ref{prop:MLEsViaStabilizer}.
The group $\St_{Y}$ is self-adjoint by~\cite[Corollary 2.25]{Wallach}.
If it is contained in the set of orthogonal matrices, then $\{ g\T g \mid g \in \St_Y \}$ consists only of the identity matrix.
Otherwise, let $h \in \St_Y$ be non-orthogonal.
Then $h\T \in \St_Y$  and hence $h\T h \in \St_Y$, and this positive definite matrix is not equal to the identity matrix. The matrix $h\T h$ has infinite order, since the eigenvalues of $(h\T h)^N$ are the $N$th powers of the eigenvalues of $h\T h$, and there exist eigenvalues that are not equal to one. 
Since $(h\T h)^N \in \St_Y$ and $((h\T h)^N)\T((h\T h)^N)= (h\T h)^{2N}$,
the set $\{ g\T g \mid g \in \St_Y\}$ is infinite.
\end{proof}

\begin{remark}\label{rem:choiceOfGroup}
In the setting of a Zariski closed self-adjoint group $G$ closed under non-zero scalar multiples, the results in
 Proposition~\ref{prop:twoStepGeneral},
 Proposition~\ref{prop:MLEsViaStabilizer}, and
Theorem~\ref{thm:MLEversusStabilityReal} are unchanged if we replace $\GSL^+$ by
the larger subgroup $\GSL^{\pm}$, by the same argument as in Lemma~\ref{lem:OrthogonalMatrixNegativeDet}.
In fact, we can also replace $\GSL^+$ by the smaller group $\GSL^\circ$, the identity component of $\GSL^+$. This is because the quotient group $\GSL^+ / \GSL^\circ$ is finite and 
every equivalence class has an orthogonal matrix representative, by the polar decomposition~\cite[Theorem~2.16]{Wallach}. The same argument holds for any Zariski-closed self-adjoint subgroup $H$ of $G$ with the same identity component as $\GSL^+$. 
We may not have such choices for groups that are not Zariski closed and self-adjoint, see Example~\ref{ex:minimumOnNegativeComponent}.
\end{remark}

We note that the converse of Theorem~\ref{thm:MLEversusStabilityReal}(d) does not hold by Example~\ref{ex:PolystableNotStableUniqueMLE} from the next section. 
We also stress the importance of the assumption that the group $G$ is self-adjoint for condition (d). This assumption is needed to conclude that the MLE is unique from the fact that there are finitely many MLEs. Indeed, the following example exhibits a Zariski closed group $G$, closed under non-zero scalar multiples, for which there exist samples $Y$ with a finite number of MLEs in the Gaussian group model given by $G$, but not a unique MLE.

\begin{example}\label{ex:nonuniqueMLE}
Let $G$ be generated by $-I$ and all non-zero scalar multiples of a non-orthogonal matrix $M$ with $M^2 = I$. For example, we can take 
\[ M = \begin{bmatrix} \nicefrac12 & 3 \\ \nicefrac14 & \nicefrac{-1}2 \end{bmatrix}. \]
The group consists of non-zero scalar multiples of the matrices $M$ and $I_2$. 
The MLEs to the Gaussian group model $\mathcal{M}_G$ given samples $Y$ are given by group elements $h \in \GSL^\pm$ that minimize the norm $\| h \cdot Y\|$, by Proposition~\ref{prop:twoStepGeneral}. 
Since scaling the matrix by some $\lambda$ scales its determinant by $\lambda^2$, the subset $\GSL^+$ consists of $\pm I_2$, and the subset $\GSL^-$ consists of the matrices $\pm M$. 
Consider the single sample
\[ Y = \begin{bmatrix} 6 \\ 1 \end{bmatrix}. \]
Then 
$ \| M \cdot Y \|^2 = \| Y \|^2, $
and the sample $Y$ has exactly two distinct MLEs. 
\hfill\exSymbol
\end{example}

\subsection{Complex Gaussian models}
\label{sec:complexGaussian}
Invariant theory is more classical over the field of complex numbers than over the reals numbers.
We see in this section that several of our results can be simplified and strengthened when working over $\CC$.
The statistical consequences concern statistical models over the complex numbers, as in~\cite{complexGaussian,goodman1963complex,complexG}. 

We consider a complex vector space $V$ and a subgroup $G \subseteq \GL_{\CC}(V)$ of the complex general linear group on $V$. To view the group elements in $G$ as invertible matrices we fix an isomorphism $V \cong \CC^m$.
The complex \emph{Gaussian group model} $\mathcal{M}_G$ consists of all multivariate distributions of mean zero whose concentration matrix is of the form $g^\ast g$ for some  $g \in G$.
The log-likelihood function becomes
\begin{equation}
\label{eq:logLikelihoodGaussianComplex}
    \ell_Y(\Psi) = \log  \det (\Psi) - \tr(\Psi S_Y), \quad \text{ where }
    S_Y := \frac{1}{n} \sum_{i=1}^n Y_i Y_i^\ast.
\end{equation}
For the action of the group $G$ on a tuple $Y = (Y_1, \ldots, Y_n) \in V^n$ given by $g \cdot Y = (g Y_1, \ldots, g Y_n)$, the norm becomes
\begin{equation*}
    \Vert g \cdot Y \Vert^2 = \sum_{i=1}^n (g Y_i)^\ast gY_i = n \, \tr(g^\ast g S_Y).
\end{equation*}
Hence, as before, maximizing the log-likelihood over concentration matrices in the complex Gaussian group model $\mathcal{M}_G$ is equivalent to minimizing
\begin{equation*}
    - \ell_Y(g^\ast g) = \frac{1}{n} \Vert g \cdot Y \Vert^2  - \log \det (g^\ast g).
\end{equation*}
Analogously to Proposition~\ref{prop:twoStepGeneral}, this can be done in two steps. 
Since we now work over~$\CC$, we only need to compute the capacity under the subgroup $\GSL^+ \subseteq G$ of matrices with determinant one, instead of using $\GSL^{\pm}$. In particular, the situation described in Example~\ref{ex:minimumOnNegativeComponent} cannot happen over $\CC$, and we do not need to consider the extra assumptions in  Remark~\ref{rem:orthogonalMatricesNegativeDet}.

\begin{proposition}
\label{prop:twoStepGeneralComplex}
Let $Y \in V^n$ be a tuple of complex samples.
If the group $G$ is closed under non-zero complex scalar multiples, 
the supremum of the log-likelihood~\eqref{eq:logLikelihoodGaussianComplex} over the model $\mathcal{M}_G$ is the double infimum
\begin{equation*}
-  \inf_{\lambda \in \RR_{>0} }
\left(
\frac{\lambda}{n}
\left(
\inf_{h \in \GSL^{+}} \Vert h \cdot Y \Vert^2
\right)
-  \dim(V) \log \lambda
\right).
\end{equation*}
\\
The MLEs, if they exist, are the matrices $\lambda h^\ast h$, where $h$ minimizes $\| h \cdot Y\|$
under the action of $\GSL^{+}$ on $V^n$, and $\lambda \in \RR_{>0}$ is the unique value minimizing the outer infimum.
\end{proposition}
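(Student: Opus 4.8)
The plan is to mirror the proof of Proposition~\ref{prop:twoStepGeneral} almost verbatim, replacing transpose by Hermitian transpose and real scalars by complex scalars, while observing the one genuine simplification that the complex setting provides. As in the real case, maximizing $\ell_Y(\Psi)$ over $\mathcal{M}_G$ is equivalent to minimizing the function $f(g) = \tfrac{1}{n}\|g \cdot Y\|^2 - \log\det(g^\ast g)$ over $g \in G$, since $f(g)$ depends only on the positive definite matrix $g^\ast g$. The key structural point, which I would address first, is the decomposition $g = \tau h$ with $\tau \in \RR_{>0}$ and $h \in \GSL^+$.

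\textbf{Where the complex case differs.} First I would explain why we may restrict to $\GSL^+$ rather than $\GSL^\pm$. Given $g \in G$, its determinant is a nonzero complex number $\det(g) = r e^{i\phi}$. Writing $g = \tau h$ with $h \in \GSL^+$ forces $\det(h) = 1$ and $\tau^m = r e^{i\phi}$ where $m = \dim(V)$; since $G$ is closed under \emph{complex} scalar multiples, we are free to absorb a unit-modulus phase into $h$, choosing the scalar $\tau$ to be a \emph{positive real} number with $\tau^m = r$. Concretely, pick any $m$-th root $\zeta$ of $e^{i\phi}$, set $h := g/(r^{1/m}\zeta) \in \GSL^+$ and $\tau := r^{1/m} \in \RR_{>0}$; then $g = \tau h$. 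This is precisely the maneuver that fails over $\RR$ and that makes the awkward $\GSL^-$ component (and Example~\ref{ex:minimumOnNegativeComponent}) disappear over $\CC$.

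\textbf{The scalar reduction.} With $g = \tau h$ I would compute, using $g^\ast g = \tau^2 h^\ast h$ and $\det(h^\ast h) = |\det h|^2 = 1$, and setting $\lambda := \tau^2$,
\begin{equation*}
f(g) = \frac{\lambda}{n}\|h \cdot Y\|^2 - \log(\lambda^m) = \frac{\lambda}{n}\|h \cdot Y\|^2 - m\log\lambda.
\end{equation*}
The univariate fact that $\lambda \mapsto \lambda C - \log\lambda$ has minimum $\log C + 1$, increasing in $C > 0$, then lets me separate the two infima exactly as before, yielding the stated double infimum. The MLE statement follows by reading off $\hat\Psi = g^\ast g = \lambda h^\ast h$ from the optimizers of the inner and outer problems.

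\textbf{The main obstacle} is purely bookkeeping rather than conceptual: I must verify that the phase-absorption step is legitimate, i.e.\ that $\log\det(g^\ast g)$ is insensitive to the choice of complex phase and that $h^\ast h$ remains positive definite with unit determinant. Because $\det(g^\ast g) = |\det g|^2$ is manifestly a positive real, the logarithm is well-defined and the phase genuinely drops out, so no subtlety survives. The remainder is a transcription of the real proof with $\T$ replaced by $\ast$.
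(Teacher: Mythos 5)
Your overall strategy is the paper's: decompose $g$ as a scalar times an element of $\GSL^+$, observe that only the determinant-one part enters the inner norm minimization, and reduce the outer problem to the univariate function $\lambda \mapsto \lambda C - \log\lambda$. The computation and the reading-off of the MLEs are fine. However, the decomposition you assert --- $g = \tau h$ with $\tau \in \RR_{>0}$ and $h \in \GSL^+$ --- does not exist in general, and your construction of it is internally inconsistent. If $g = \tau h$ with $\tau$ a positive real and $\det(h)=1$, then $\det(g) = \tau^{m}$ is a positive real; so the decomposition fails whenever $\det(g)$ is not (e.g.\ $g = \mathsf{i}\, I_m$ in $\GL_m(\CC)$). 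In your explicit construction you set $h := g/(r^{1/m}\zeta)$, which gives $g = (r^{1/m}\zeta)\, h$ with the genuinely complex scalar $r^{1/m}\zeta$, not $g = r^{1/m} h$; and the phase $\zeta$ cannot be ``absorbed into $h$,'' because $\zeta h$ has determinant $\zeta^m = e^{\mathsf{i}\phi}$, which is not $1$ in general, so $\zeta h \notin \GSL^+$.

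The repair is exactly what the paper does: allow $\tau \in \CC\setminus\{0\}$, so that $g = \tau h$ with $h \in \GSL^+$ always exists (take $\tau$ to be any $m$-th root of $\det g$), and note that $g^\ast g = |\tau|^2 h^\ast h$ and $\|g\cdot Y\|^2 = |\tau|^2\,\|h\cdot Y\|^2$, so only the positive real number $\lambda := |\tau|^2$ enters. With $\lambda = |\tau|^2$ in place of your $\lambda = \tau^2$, every subsequent line of your argument goes through verbatim. The true reason $\GSL^\pm$ collapses to $\GSL^+$ over $\CC$ is that the phase ambiguity is killed by the modulus-squared in $g^\ast g$ and in the norm, not that it can be pushed into the determinant-one factor.
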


\begin{proof}
The proof is analogous to the proof of Proposition~\ref{prop:twoStepGeneral}.
The only difference is that we can write $g \in G$
as $g = \tau h$, where $\tau \in \CC \setminus \lbrace 0 \rbrace$ and $h \in \GSL^+$.
Then we see that
\begin{equation*}
    - \ell_Y(g^\ast g) = \frac{|\tau|^2}{n} \Vert h \cdot Y \Vert^2 - \dim(V) \log (|\tau|^2).
\end{equation*}
Setting $\lambda = |\tau|^2$ and continuing as in the proof of Proposition~\ref{prop:twoStepGeneral}, shows the claim.
\end{proof}

Using the same assumptions as in Proposition~\ref{prop:twoStepGeneralComplex}, we see that Theorem~\ref{thm:gaussianMLEstable} holds over $\CC$ after replacing $\GSL^{\pm}$ by $\GSL^+$.
The most important difference between the real and the complex setting is that Theorem~\ref{thm:MLEversusStabilityReal}(d) is an equivalence over $\CC$.
In Example~\ref{ex:PolystableNotStableUniqueMLE}, we will see that this is not true over $\RR$.
In the remainder of this section, we prove this equivalence for complex Gaussian group models given by self-adjoint groups $G$.
We first give an analogue of Proposition~\ref{prop:MLEsViaStabilizer} over~$\CC$.

\begin{proposition}
\label{prop:MLEsViaStabilizerComplex}
Let $Y \in V^n$ be a tuple of complex samples, and $G \subseteq \GL_{\CC}(V)$ be a Zariski closed self-adjoint group, which is closed under non-zero complex scalar multiples. If $\lambda h^\ast h$ is an MLE given $Y$, with $h \in \GSL^+$ and $\lambda \in \RR_{>0}$, then all MLEs given $Y$ are of the form $g^\ast(\lambda h^\ast h)g$, where $g$ is in the $\GSL^+$-stabilizer of $Y$.
\end{proposition}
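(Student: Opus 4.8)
The plan is to mirror the proof of Proposition~\ref{prop:MLEsViaStabilizer} line by line, replacing the transpose $(\cdot)\T$ with the Hermitian transpose $(\cdot)^\ast$ and invoking the complex versions of the results. First I would show that the proposed matrices are genuinely MLEs. If $g$ lies in the $\GSL^+$-stabilizer of $Y$, then $hg$ achieves the same norm as $h$ under the $\GSL^+$-action on $V^n$, so by Proposition~\ref{prop:twoStepGeneralComplex} the matrix $\lambda(hg)^\ast hg = g^\ast(\lambda h^\ast h)g$ is again an MLE. This direction is a direct computation.

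For the converse, I would take an arbitrary MLE. By Proposition~\ref{prop:twoStepGeneralComplex}, it has the form $\lambda(h')^\ast h'$ for some $h' \in \GSL^+$ that minimizes the norm, so that
\[
    \| h' \cdot Y \|^2 = \inf_{\tilde h \in \GSL^+} \| \tilde h \cdot Y \|^2 = \| h \cdot Y \|^2.
\]
The key structural input is that, since $G \subseteq \GL_\CC(V)$ is Zariski closed, self-adjoint and closed under non-zero complex scalar multiples, the subgroup $\GSL^+ \subseteq \GL_\CC(V)$ inherits these properties, and so does its diagonal embedding into $\GL_\CC(V^n)$. This lets me apply the complex Kempf--Ness theorem, Theorem~\ref{thm:KempfNess}(b): both $h \cdot Y$ and $h' \cdot Y$ are of minimal norm in the same orbit, so there is a \emph{unitary} matrix $u \in \GSL^+$ with $u \cdot (h \cdot Y) = h' \cdot Y$. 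Setting $g := h^{-1} u^{-1} h'$ then places $g$ in the $\GSL^+$-stabilizer of $Y$, and writing $h' = u h g$ together with $u^\ast u = I$ yields $\lambda(h')^\ast h' = g^\ast(\lambda h^\ast h)g$, completing the proof.

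The main obstacle to verify is that $\GSL^+$ and its diagonal embedding are Zariski closed and self-adjoint over $\CC$, since this is what legitimizes the use of Kempf--Ness; but this is exactly the complex analogue of the corresponding step in Proposition~\ref{prop:MLEsViaStabilizer} and follows from the hypotheses on $G$. The only genuinely complex-specific point is that the minimal-norm orbit element here is related by a \emph{unitary} rather than orthogonal transformation, which is precisely what Theorem~\ref{thm:KempfNess}(b) delivers in the case $\KK = \CC$. Consequently no new ideas beyond those in the real proof are needed, and the argument goes through essentially verbatim.
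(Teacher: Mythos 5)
Your proposal is correct and matches the paper's intent exactly: the paper's proof of this proposition simply states that it is "proven analogously as Proposition~\ref{prop:MLEsViaStabilizer} using the complex version of Kempf-Ness Theorem~\ref{thm:KempfNess} and Proposition~\ref{prop:twoStepGeneralComplex}," and your write-up is precisely that analogous argument, with the unitary matrix from Theorem~\ref{thm:KempfNess}(b) playing the role of the orthogonal one.
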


\begin{proof}
This is proven analogously as Proposition~\ref{prop:MLEsViaStabilizer} using the complex version of Kempf-Ness Theorem~\ref{thm:KempfNess} and Proposition~\ref{prop:twoStepGeneralComplex} instead of Proposition~\ref{prop:twoStepGeneral}.
\end{proof}

\begin{theorem}\label{thm:MLEversusStabilityComplex}
Consider a tuple $Y \in V^n$ of complex samples, and let
 $G \subseteq \GL_{\CC}(V)$ be a Zariski closed self-adjoint group, which is closed under non-zero complex scalar multiples.
The stability under the action of $\GSL^+$ on $V^n$ is related to ML estimation for the complex Gaussian group model $\mathcal{M}_G$ as follows.
\[ \begin{matrix}
(a) & Y \text{ unstable} & \Leftrightarrow & \ell_Y \text{ not bounded from above} & \\
(b) & Y \text{ semistable} & \Leftrightarrow & \ell_Y \text{  bounded from above} & \\ 
(c) & Y \text{ polystable} & \Leftrightarrow & \text{MLE exists}
 & \\
(d) & Y \text{ stable} & \Leftrightarrow &
\text{finitely many MLEs exist}
& \Leftrightarrow 
\text{ unique MLE exists} 
\end{matrix}
\]
\end{theorem}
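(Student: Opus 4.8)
The plan is to mirror the proof of Theorem~\ref{thm:MLEversusStabilityReal}, using Proposition~\ref{prop:twoStepGeneralComplex} and the complex Kempf--Ness theorem in place of their real counterparts, and then to supply the one genuinely new implication that upgrades part~(d) to an equivalence. Parts~(a), (b) and the forward direction of~(c) are immediate from the complex analogue of Theorem~\ref{thm:gaussianMLEstable} noted after Proposition~\ref{prop:twoStepGeneralComplex}: instability forces the inner infimum to vanish and the outer infimum to $-\infty$, while semistability makes it a finite real number. For the converse of~(c), if an MLE exists then the double infimum of Proposition~\ref{prop:twoStepGeneralComplex} is attained, so some $h\in\GSL^+$ realises the minimal norm in the orbit of $Y$; by Theorem~\ref{thm:KempfNess}(d) over $\CC$ the orbit $\GSL^+\cdot Y$ is then closed, and since an MLE exists we have $Y\neq 0$, so the tuple $Y$ is polystable.

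For part~(d) I would first reduce, exactly as in the real case, to a polystable $Y$ of minimal norm in its orbit, so that one MLE is $\lambda I$ and, by Proposition~\ref{prop:MLEsViaStabilizerComplex}, the complete set of MLEs is $\{\lambda\, g^\ast g : g\in\St_Y\}$. The stabiliser $\St_Y$ is Zariski closed and self-adjoint. The same dichotomy as over $\RR$ then applies: either $\St_Y$ consists of unitary matrices, in which case $g^\ast g=I$ throughout and the MLE is unique; or there is a non-unitary $h\in\St_Y$, whence $h^\ast h\in\St_Y$ is positive definite, distinct from $I$, and of infinite order, so $\{(h^\ast h)^{2N}\}_{N}$ produces infinitely many MLEs. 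This settles the equivalence of \emph{finitely many MLEs} and \emph{unique MLE}, and the implication \emph{stable}~$\Rightarrow$~\emph{finitely many MLEs} follows from finiteness of $\St_Y$ together with Proposition~\ref{prop:MLEsViaStabilizerComplex}.

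The one step with no real analogue---and the crux of the theorem---is the converse \emph{unique MLE}~$\Rightarrow$~\emph{stable}. Here the dichotomy above shows that a unique MLE forces $\St_Y$ to consist entirely of unitary matrices. The key point I would then invoke is that a Zariski closed, hence complex-algebraic, subgroup of $\GL(V^n)$ contained in the unitary group must be finite. Concretely, $\Lie(\St_Y)$ is a \emph{complex} subspace of the matrix algebra contained in the skew-Hermitian matrices, and the only complex subspace of the skew-Hermitian matrices is $\{0\}$: if $X$ and $iX$ are both skew-Hermitian, then $X$ is simultaneously Hermitian and skew-Hermitian, so $X=0$. Thus $\St_Y$ is zero-dimensional and, being an affine algebraic group, finite; hence $Y$ is stable. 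This is exactly where the argument diverges from the real setting, where a Zariski closed subgroup of the orthogonal group (for instance $\SO_m$) can have positive dimension, and it is the reason the equivalence in~(d) holds over $\CC$ but fails over $\RR$, as Example~\ref{ex:PolystableNotStableUniqueMLE} will confirm.
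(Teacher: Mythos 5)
Your proposal is correct and follows essentially the same route as the paper: reduce to a polystable $Y$ of minimal norm so that $\lambda I$ is an MLE, use Proposition~\ref{prop:MLEsViaStabilizerComplex} to identify the full set of MLEs with $\{\lambda\, g^\ast g : g \in \St_Y\}$, and deduce from uniqueness that $\St_Y$ sits inside the unitary group and is therefore finite. The only (harmless) difference is in the last step: the paper concludes finiteness from the fact that $\St_Y$ is compact and Zariski closed, whereas you make the same fact explicit by noting that $\Lie(\St_Y)$ is a complex subspace of the skew-Hermitian matrices and hence zero --- a valid, slightly more self-contained justification of the same point.
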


\begin{proof}
We prove that uniqueness of the MLE given $Y$ implies that $Y$ is stable.
The proofs of the other parts of the theorem are the same as in the real setting in Theorems~\ref{thm:gaussianMLEstable} and~\ref{thm:MLEversusStabilityReal}. 

Let us assume that the MLE given $Y$ exists uniquely. 
We see from (c) that $Y$ is polystable.
Hence, we need to show that the $\GSL^+$-stabilizer of $Y$, denoted by $\St_Y$, is finite. 
For $h \in \GSL^+$ we have $\St_{h \cdot Y} = h \, \St_Y \, h^{-1}$ and, from Proposition~\ref{prop:MLEsViaStabilizerComplex}, we have
    \begin{equation*}
        \lbrace \text{MLEs given } h \cdot Y \rbrace = \left( h^{-1} \right)^\ast \lbrace \text{MLEs given } Y \rbrace h^{-1}.
    \end{equation*}
As in the real setting, this allows us to assume that $Y$ is of minimal norm in its orbit under $\GSL^+$.
Then $\lambda I$ is the MLE given $Y$, where $\lambda \in \RR_{>0}$ minimizes the outer infimum in Proposition~\ref{prop:twoStepGeneralComplex}.
Since the matrix $\lambda I$ is the unique MLE, the stabilizer $\St_Y$ is contained in the group  of unitary matrices in $G$, by Proposition~\ref{prop:MLEsViaStabilizerComplex}.
In particular, $\St_Y$ is $\CC$-compact.
 As the subgroup $\St_Y$ is also Zariski closed (defined by the equations $gY = Y$)
we conclude that $\St_Y$ is finite.
\end{proof}

\section{Matrix Normal Models}
\label{sec:matrixnormal}
In this section we study matrix normal models, which we have already seen in Example~\ref{ex:matrixnormal}.
Consider the multivariate Gaussian of dimension $m = m_1 m_2$. 
A matrix normal model is a sub-model consisting of covariance matrices that factor as a Kronecker product $\Sigma_1 \otimes \Sigma_2$ where $\Sigma_i \in \PD_{m_i}$. 
Setting $\Psi_1 := \Sigma_1^{-1}$ and $\Psi_2 := \Sigma_2^{-1}$,
we can write the log-likelihood function \eqref{eqn:gaussianlikelihood} for the matrix normal model as
\begin{equation}
\label{eqn:matrixnormallikelihood}
    \ell_Y(\Psi_1, \Psi_2)
    =  \, m_2 \, \log \det (\Psi_1)
     +  \, m_1 \, \log \det (\Psi_2)
     - \frac{1}{n} \mathrm{tr} \left( \Psi_1 \sum_{i=1}^n Y_i \Psi_2 Y_i\T \right).
\end{equation}
An MLE is a concentration matrix
$\hat{\Psi}_1 
\otimes \hat{\Psi}_2 \in \PD_{m_1} \otimes \PD_{m_2}$ that maximizes the log-likelihood. Unless specified, we refer to matrix normal models over the real numbers and abbreviate $\GL_m(\RR)$ and $\SL_m(\RR)$ to $\GL_m$ and $\SL_m$ respectively.

\subsection{Relating norm minimization to ML estimation}
We describe how to specialize our results for Gaussian group models from Section~\ref{sec:generalG} to matrix normal models.
For this, consider the left-right action of $\GL_{m_1} \times \GL_{m_2}$ on $(\RR^{m_1 \times m_2})^n$ given by
\begin{equation}
\label{eq:SLaction}
    g \cdot Y := (g_1 Y_1 g_2\T, \ldots, g_1 Y_n g_2\T), 
\end{equation}
where $Y = (Y_1, \ldots, Y_n)$ is a sample tuple in $(\RR^{m_1 \times m_2})^n$ and $g = (g_1, g_2) \in \GL_{m_1} \times \GL_{m_2}$.
The left-right action induces the representation
\[\varrho \colon \GL_{m_1} \times \GL_{m_2} \to \GL_{m_1 m_2}, \quad (g_1,g_2) \mapsto g_1 \otimes g_2
\]
and the matrix normal model arises as the Gaussian group model of $G \!:=\! \varrho(\GL_{m_1} \!\!\times\! \GL_{m_2})$.

The subgroup $G \subseteq \GL_{m_1 m_2}$ is Zariski closed, self-adjoint and closed under non-zero scalar multiples.
Therefore, our results from the previous section apply to the action of~$\GSL^+$.  However, it is possible and more convenient to directly work with the left-right action of $\SL_{m_1} \times \SL_{m_2}$.
The following theorem makes this precise.

\begin{theorem}
\label{thm:bigTheoremMatrixNormal}
Let $Y \in (\RR^{m_1 \times m_2})^n$ be a matrix tuple.
The supremum of the log-likelihood $\ell_Y$ in~\eqref{eqn:matrixnormallikelihood} over $\PD_{m_1} \times \PD_{m_2}$ is given by the double infimum
\begin{equation}
    \label{eqn:doubleinf}
    -\inf_{\lambda \in \RR_{>0} } \left( \frac{\lambda}{n} \left( \inf_{h \in \SL_{m_1} \times \SL_{m_2}} \| h \cdot Y \|^2 \right) -  m_1 m_2 \log \lambda \right) .
\end{equation}  
The MLEs, if they exist, are the matrices of the form $\lambda h_1\T h_1 \otimes h_2\T h_2$, where $h = (h_1,h_2)$ minimizes $\| h \cdot Y \|$ under the left-right action of $\SL_{m_1} \times \SL_{m_2}$, and $\lambda \in \RR_{>0}$ is the unique value that minimizes the outer infimum. 

If there are several MLEs given $Y$, they are related via the stabilizer of $Y\!$ in $\SL_{m_1} \!\!\times\! \SL_{m_2}$.
More precisely, 
every $(g_1,g_2)$ in the stabilizer of $Y$
yields an MLE $\lambda g_1\T h_1\T h_1 g_1 \otimes g_2\T h_2\T h_2 g_2$ and, 
conversely, every MLE given $Y$ is of this form.

The stability under the left-right action of $\SL_{m_1} \times \SL_{m_2}$ is related to ML estimation via:
\[ \begin{matrix} 
(a) & Y \text{ unstable} & \Leftrightarrow & \ell_Y \text{ not bounded from above} \\
(b) & Y \text{ semistable} & \Leftrightarrow & \ell_Y \text{ bounded from above} \\ 
(c) & Y \text{ polystable} & \Leftrightarrow & \text{MLE exists}
 \\
(d) & Y \text{ stable} & \Rightarrow & \text{MLE exists  uniquely}
\end{matrix}
\]
\end{theorem}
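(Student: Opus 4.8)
The plan is to deduce everything from the general theory of Section~\ref{sec:generalG} applied to the group $G := \varrho(\GL_{m_1} \times \GL_{m_2})$, and then transport each statement from $\GSL^+$ to the left-right action of $\SL_{m_1} \times \SL_{m_2}$. As already observed before the theorem, $G \subseteq \GL_{m_1 m_2}$ is Zariski closed, self-adjoint, and closed under non-zero scalar multiples, so Proposition~\ref{prop:twoStepGeneral}, Proposition~\ref{prop:MLEsViaStabilizer}, and Theorem~\ref{thm:MLEversusStabilityReal} all apply verbatim to the action of $\GSL^+$. The first task is therefore purely a bookkeeping translation between the abstract representation $(g_1,g_2) \mapsto g_1 \otimes g_2$ and the concrete left-right action~\eqref{eq:SLaction}. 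Here one uses the identity $(g_1 \otimes g_2)\T (g_1 \otimes g_2) = g_1\T g_1 \otimes g_2\T g_2$ from Example~\ref{ex:matrixnormal}, together with the fact that, under the vectorization identifying $\RR^{m_1 \times m_2}$ with $\RR^{m_1 m_2}$, multiplying a vectorized sample by $g_1 \otimes g_2$ corresponds to $Y_i \mapsto g_1 Y_i g_2\T$; this makes the Frobenius norm $\| h \cdot Y \|$ in~\eqref{eq:SLaction} agree with the Euclidean norm used throughout Section~\ref{sec:generalG}.

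The second task is to replace $\GSL^+$ by the more convenient group $\SL_{m_1} \times \SL_{m_2}$. I would first note that $\varrho(\SL_{m_1} \times \SL_{m_2}) \subseteq \GSL^+$ since $\det(g_1 \otimes g_2) = \det(g_1)^{m_2}\det(g_2)^{m_1}$, and that the only loss in passing to $\SL_{m_1} \times \SL_{m_2}$ is the kernel of $\varrho$ restricted to this group, namely the finite set of pairs $(\zeta_1 I, \zeta_2 I)$ with $\zeta_i$ roots of unity and $\zeta_1^{m_1}=\zeta_2^{m_2}=1$ and $\zeta_1\zeta_2 = 1$. Crucially, these scalar matrices act trivially on the samples, so the orbit, orbit closure, and minimal norm are identical whether computed in $\GSL^+$ or under the left-right $\SL_{m_1}\times \SL_{m_2}$-action; hence every stability notion of Definition~\ref{def:StabilityNotions} agrees for the two groups, and the double infimum~\eqref{eqn:doubleinf} is literally the specialization of Proposition~\ref{prop:twoStepGeneral} with $\dim(V) = m_1 m_2$. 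The appeal to Remark~\ref{rem:choiceOfGroup} legitimizes this replacement: $\SL_{m_1}\times\SL_{m_2}$ and $\GSL^+$ share the same identity component up to the finite central kernel, so parts (a)--(d) and the MLE descriptions transfer directly.

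The third task is the statement relating distinct MLEs via the stabilizer. I would invoke Proposition~\ref{prop:MLEsViaStabilizer}: if $\lambda h\T h$ is an MLE then all MLEs are $g\T(\lambda h\T h)g$ for $g$ in the $\GSL^+$-stabilizer of $Y$. Translating through $\varrho$, a stabilizer element corresponds to $(g_1,g_2)$ with $g_1 Y_i g_2\T = Y_i$ for all $i$, and the Kronecker factorization turns $g\T(\lambda h\T h)g$ into $\lambda\, g_1\T h_1\T h_1 g_1 \otimes g_2\T h_2\T h_2 g_2$, which is exactly the claimed form. Finally, parts (a)--(d) of the stability dictionary are just the specialization of Theorem~\ref{thm:MLEversusStabilityReal}(a)--(d), with the uniqueness-from-finiteness equivalence of the general theorem weakening to the one-directional implication in~(d) as stated. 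I expect the main obstacle to be the second task: one must argue carefully that the finite kernel of $\varrho|_{\SL_{m_1}\times \SL_{m_2}}$ and the gap between $\varrho(\SL_{m_1}\times\SL_{m_2})$ and the full $\GSL^+$ (which also contains scalar multiples absorbed into $\lambda$) do not affect any of the stability notions or the minimal-norm computation. Once it is established that these finite and scalar discrepancies act trivially on $V^n$, every remaining step is a direct specialization and the proof is essentially a matter of recording the correspondence.
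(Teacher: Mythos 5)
Your proposal is correct and follows essentially the same route as the paper: apply Propositions~\ref{prop:twoStepGeneral} and~\ref{prop:MLEsViaStabilizer} and Theorem~\ref{thm:MLEversusStabilityReal} to $H=\varrho(\SL_{m_1}\times\SL_{m_2})$, which is Zariski closed, self-adjoint, and shares its identity component with $\GSL^+$ (so Remark~\ref{rem:choiceOfGroup} applies), and then pass from $H$ to $\SL_{m_1}\times\SL_{m_2}$ itself via the finite kernel of $\varrho$. The paper's proof is exactly this two-step reduction, so no further comparison is needed.
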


\begin{proof}
The subgroup $H := \varrho(\SL_{m_1} \times \SL_{m_2}) \subseteq G$ is Zariski closed, self-adjoint and shares the same identity component as $\GSL^+$. Thus Propositions~\ref{prop:twoStepGeneral}, \ref{prop:MLEsViaStabilizer} and Theorem~\ref{thm:MLEversusStabilityReal} apply to $H$ as well, by Remark~\ref{rem:choiceOfGroup}.
Furthermore, the kernel of $\varrho$ when restricted to $\SL_{m_1} \times \SL_{m_2}$ is finite. Hence, the stability notions in Definition~\ref{def:StabilityNotions}(a)--(d) coincide for $\SL_{m_1} \times \SL_{m_2}$ and $H$, so we can consider $\SL_{m_1} \times \SL_{m_2}$ instead of its image $H$ under $\varrho$.
\end{proof}

We have seen in Theorem~\ref{thm:MLEversusStabilityComplex} that over the complex numbers, the converse of Theorem~\ref{thm:bigTheoremMatrixNormal}(d) also holds. However, over the reals there exist matrix tuples $Y$ with a unique MLE but an infinite stabilizer, as the following example shows.

\begin{example}
\label{ex:PolystableNotStableUniqueMLE}
We set $m_1 = m_2 = n = 2$ and take $Y \in (\RR^{2 \times 2})^2$, where
    \begin{equation*}
        Y_1 = \begin{pmatrix} 1 & 0 \\ 0 & 1 \end{pmatrix} , \quad
        Y_2 = \begin{pmatrix} 0 & -1 \\ 1 & 0 \end{pmatrix}.
    \end{equation*}
We prove that the MLE given $Y$ is unique although the stabilizer of $Y$ is infinite.

We first show that $Y$ is polystable under the left-right action of $\SL_2 \times \SL_2$. Note that any matrix in $\SL_2$ has Frobenius norm at least $\sqrt{2}$. Indeed, if $\sigma_1$ and $\sigma_2$ are the singular values of $g$, then $\| g \|^2 = \sigma_1^2 + \sigma_2^2$, where $\sigma_1 \sigma_2 = 1$. By the arithmetic mean - geometric mean inequality, we have $\| g \|^2 \geq 2$. Therefore $Y_1$ and $Y_2$ have minimal Frobenius norm in $\SL_2$ and thus $Y$ is of minimal norm in its orbit. By Kempf-Ness, Theorem~\ref{thm:KempfNess}(d), the matrix tuple $Y$ is polystable.

The stabilizer of $Y$ consists of matrices $(g_1,g_2) \in \SL_2 \times \SL_2$ with $g_1 Y_i g_2\T = Y_i$. For $Y_1$, this gives $g_1 g_2\T = I_2$, i.e. $g_2\T = g_1^{-1}$.  Then, from $Y_2$, we obtain $g_1 Y_2 = Y_2 g_1$, and so 
    \begin{equation*}
        g_1 = \begin{pmatrix} a & b\\ -b & a \end{pmatrix} \; \text{ with } \; a^2 + b^2 = 1,
    \end{equation*}
i.e. $g_1 \in \SO_2(\RR)$ and hence $g_2 = g_1^{-\mathsf{T}} = g_1$. Thus the stabilizer of $Y$ is contained in the infinite set $\lbrace (g,g) \mid g \in \SO_2 \rbrace$. In fact, we have equality, as $\SO_2$ is commutative and $Y_1,Y_2 \in \SO_2$.

Since $Y$ is of minimal norm in its orbit, we use Theorem~\ref{thm:bigTheoremMatrixNormal} to conclude that $\lambda I_2 \otimes I_2$ is an MLE. 
Any other MLE is given by $\lambda g_1\T I_2 g_1 \otimes g_2\T I_2 g_2$ for some $(g_1,g_2)$ in the stabilizer of $Y$. Since the stabilizer is contained in $\SO_2 \times \SO_2$, the MLE is unique.

We remark that for the complex matrix normal model the MLEs involve $g^* \! g$ rather than $g\T \! g$, by Proposition~\ref{prop:MLEsViaStabilizerComplex}, hence from the complex stabilizer $\lbrace (g,g) \mid g \in \SO_2(\CC) \rbrace$ we obtain infinitely many MLEs.
\hfill\exSymbol
\end{example}

The following example shows that all stability conditions in Theorem~\ref{thm:bigTheoremMatrixNormal}(a)--(d) can occur.

\begin{example} We set $m_1=m_2=2$, and study stability under $\SL_2 \times \SL_2$ on $(\RR^{2 \times 2})^n$. We use the matrices
    \begin{equation*}
        Y_1 = \begin{pmatrix} 1 & 0 \\ 0 & 1 \end{pmatrix}, \quad
        Y_2 = \begin{pmatrix} 0 & -1 \\ 1 & 0 \end{pmatrix}, \quad
        Y_3 = \begin{pmatrix} 0 & 1 \\ 1 & 0 \end{pmatrix}, \quad
        Y_4 = \begin{pmatrix} 0 & 1 \\ 0 & 0 \end{pmatrix}.
    \end{equation*}
\begin{itemize}
    \item[(a)] The matrix $Y_4$ is unstable and the matrix tuple $(Y_4,Y_4)$ is unstable as well.
    
    \item[(b)] The orbit of the matrix tuple $(Y_1,Y_4)$ is contained in $\lbrace (g,M) \mid g \in \SL_2, \, M \neq 0 \rbrace$. In particular, $(Y_1,Y_4)$ is semistable as $\SL_2$ is closed. Moreover, for any $g \in \SL_2$ and $M \in \RR^{2 \times 2} \setminus \lbrace 0 \rbrace$ we have
        \begin{equation*}
            \| (g,M) \|^2 = \| g \|^2 + \| M \|^2 \geq 2 + \| M \|^2 > 2, 
        \end{equation*}
    where we used $\|g\|^2 \geq 2$, see Example~\ref{ex:PolystableNotStableUniqueMLE}. On the other hand, we have
        \begin{equation*}
            \left( \begin{pmatrix} \varepsilon & 0 \\ 0 & \varepsilon^{-1} \end{pmatrix},
            \begin{pmatrix} \varepsilon^{-1} & 0 \\ 0 & \varepsilon \end{pmatrix} \right) \cdot (Y_1,Y_4) 
            = \left( \begin{pmatrix} 1 & 0 \\ 0 & 1 \end{pmatrix},
            \begin{pmatrix} 0 & \varepsilon^2 \\ 0 & 0 \end{pmatrix}\right),
        \end{equation*}
    which tends to $(Y_1,0)$ as $\varepsilon \to 0$. Since $\| (Y_1,0) \|^2 = 2$ the capacity of $(Y_1,Y_4)$ is not attained by an element in the orbit of $(Y_1,Y_4)$, and $Y$ is not polystable.
    
    \item[(c)] The matrix $Y_1=I_2$ is polystable by Kempf-Ness, Theorem~\ref{thm:KempfNess}(d), as it is an $\SL_2$ matrix of minimal norm. An MLE is given by $\lambda I_2 \otimes I_2$, where $\lambda$ is the minimizer of the outer infimum in \eqref{eqn:doubleinf}. Furthermore, $Y_1$ is not stable, because its stabilizer is $\lbrace (g,g^{-\mathsf{T}}) \mid g \in  \SL_2 \rbrace$. There are infinitely many MLEs given $Y$, of the form $\lambda g\T g \otimes g^{-1} g^{-\mathsf{T}}$ for $g \in \SL_2$, see Theorem~\ref{thm:bigTheoremMatrixNormal}.
    
    \item[(d)] We show that $Y = (Y_1,Y_2,Y_3)$ is stable. First, any tuple $(M_1,M_2,M_3)$ in the orbit of $Y$ satisfies $M_1,M_2 \in \SL_2$ and $\det(M_3) = -1$. Any $2 \times 2$ matrix of determinant $\pm 1$ has Frobenius norm at least $\sqrt{2}$, by the same argument as in Example~\ref{ex:PolystableNotStableUniqueMLE}. Therefore, $Y$ is of minimal norm in its orbit, and hence polystable by Theorem~\ref{thm:KempfNess}(d). It remains to show that the stabilizer of $Y$ is finite. The discussion from Example~\ref{ex:PolystableNotStableUniqueMLE} ensures that the stabilizer of $Y$ is contained in $ \lbrace (g,g) \mid g \in \SO_2 \rbrace$. Given $g \in \SO_2$, the condition $g Y_3 g\T = Y_3$ implies $g Y_3 = Y_3 g$. This holds exactly for $g = \pm I_2$. Therefore, the stabilizer of $Y$ is the finite set $\lbrace (I_2,I_2) , (-I_2,-I_2) \rbrace$. \hfill\exSymbol
\end{itemize}
\end{example}

\subsection{Boundedness of the likelihood via semistability}

We give new conditions that guarantee the boundedness of the likelihood in a matrix normal model. To do this, we use the equivalence of the boundedness of the likelihood with the semistability of a matrix tuple under left-right action, see Theorem~\ref{thm:bigTheoremMatrixNormal}(b). 
We consider matrix tuples in $(\RR^{m_1 \times m_2})^n$ where we may assume by duality that $m_1 \geq m_2$.
The null cone of the complex left-right action of $\SL_{m_1}(\CC) \times \SL_{m_2}(\CC)$ on matrix tuples was described in~\cite[Theorem~2.1]{BurginDraisma}.
We prove the real analogue of this result and, 
with this, give a characterization of the matrix tuples with unbounded log-likelihood in Theorem~\ref{thm:nullconeLeftRight}.
This has been derived in~\cite[Theorems 3.1(i) and 3.3(i)]{DrtonKuriki} using a different method. 

The dimension of the complex null cone is given in~\cite{BurginDraisma}. 
By translating this result to the real numbers,
we derive a new upper bound on the maximum likelihood threshold $\mltb$, the minimum number of samples needed for the likelihood function to be generically bounded from above; see Corollary~\ref{cor:newMLEbound}. 
This translates in invariant theory to finding the minimum sample size $n$ such that the null cone does not fill its ambient space. 
In addition, we recover lower and upper bounds from the literature in Corollaries \ref{cor:knownMLEbound}, \ref{cor:newMLEboundWeaker} and \ref{cor:divisible}. 

\begin{theorem}
\label{thm:nullconeLeftRight}
Consider $Y \in (\RR^{m_1 \times m_2})^n$, a tuple of $n$ samples from a matrix normal model.
The log-likelihood function $\ell_Y$ is not bounded from above if and only if
there exist subspaces $V_1 \subseteq \RR^{m_1}$ and $V_2 \subseteq \RR^{m_2}$ with
$m_1 \dim V_2 > m_2 \dim V_1$
such that
$Y_i V_2 \subseteq V_1$ for all $i = 1, \ldots, n$.
\end{theorem}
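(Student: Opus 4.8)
The plan is to recast the boundedness question as null cone membership and then transport the already-known complex description of the null cone to the reals. By Theorem~\ref{thm:bigTheoremMatrixNormal}(a), the log-likelihood $\ell_Y$ is unbounded from above if and only if $Y$ lies in the null cone of the left-right action of $\SL_{m_1}\times\SL_{m_2}$ on $(\RR^{m_1\times m_2})^n$. The diagonally embedded group is Zariski closed and self-adjoint, and its complex Zariski closure is the left-right action of $\SL_{m_1}(\CC)\times\SL_{m_2}(\CC)$; so Proposition~\ref{prop:realVsComplexCapacity}, applied to the action on the tuple space $V^n$, gives $\mathcal{N}_\RR=\mathcal{N}_\CC\cap(\RR^{m_1\times m_2})^n$. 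Hence it suffices to show that a \emph{real} tuple $Y$ lies in $\mathcal{N}_\CC$ if and only if there exist \emph{real} subspaces $V_1,V_2$ with $m_1\dim V_2>m_2\dim V_1$ and $Y_iV_2\subseteq V_1$ for all $i$. The complex null cone is described by~\cite[Theorem~2.1]{BurginDraisma}: $Y\in\mathcal{N}_\CC$ exactly when there are \emph{complex} subspaces $W_2\subseteq\CC^{m_2}$ and $W_1\subseteq\CC^{m_1}$ with $m_1\dim W_2>m_2\dim W_1$ and $Y_iW_2\subseteq W_1$.

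One implication is immediate. Any real pair $(V_1,V_2)$ complexifies to $(V_1\otimes\CC,\,V_2\otimes\CC)$, which has the same dimensions and still satisfies $Y_i(V_2\otimes\CC)\subseteq V_1\otimes\CC$ because the $Y_i$ are real; so the existence of real witnessing subspaces forces $Y\in\mathcal{N}_\CC$. The real content is the converse: to manufacture real witnesses out of the complex subspaces $W_1,W_2$ produced by Burgin--Draisma.

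The key idea I would use exploits that $Y$ is real, so complex conjugation commutes with each $Y_i$ and therefore $Y_i\overline{W_2}\subseteq\overline{W_1}$. I would pass to the conjugation-invariant subspaces obtained by intersection and sum, setting $A_j:=W_j\cap\overline{W_j}$ and $B_j:=W_j+\overline{W_j}$ for $j=1,2$. Each satisfies $\overline{A_j}=A_j$ and $\overline{B_j}=B_j$, hence each is the complexification of a real subspace of equal dimension. The containments survive: $Y_iA_2\subseteq A_1$ and $Y_iB_2\subseteq B_1$, using $Y_iW_2\subseteq W_1$ and $Y_i\overline{W_2}\subseteq\overline{W_1}$ together with the fact that images of intersections lie inside intersections of images. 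The dimension bookkeeping then closes the argument: from $\dim A_j+\dim B_j=2\dim W_j$ one gets
\[
(m_1\dim A_2-m_2\dim A_1)+(m_1\dim B_2-m_2\dim B_1)=2\,(m_1\dim W_2-m_2\dim W_1)>0,
\]
so at least one of the two conjugation-invariant pairs already satisfies the strict inequality. Taking the real part of that pair yields the desired $V_1,V_2$.

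The main obstacle I anticipate is making this complex-to-real passage airtight: confirming that $A_j$ and $B_j$ are genuinely conjugation-invariant and hence descend to real subspaces of the claimed dimension, and checking that the containments are preserved under the intersection and sum operations. Once that is in place, the dimension inequality is a one-line counting step and the first paragraph's reduction rests only on earlier results. A secondary point to verify is that the hypotheses of Proposition~\ref{prop:realVsComplexCapacity} truly apply here, i.e. that $\SL_{m_1}(\CC)\times\SL_{m_2}(\CC)$ is the Zariski closure of the real group and that self-adjointness holds for the left-right representation; both are standard.
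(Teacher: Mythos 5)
Your proposal is correct and essentially reproduces the paper's argument: the reduction to the complex null cone via Theorem~\ref{thm:bigTheoremMatrixNormal} and Proposition~\ref{prop:realVsComplexCapacity}, followed by the Burgin--Draisma description and a conjugation-symmetrization to extract real witnesses. Your subspaces $A_j=W_j\cap\overline{W_j}$ and $B_j=W_j+\overline{W_j}$ are exactly the complexifications of the paper's $W_j\cap\RR^{m_j}$ and $\mathrm{Re}(W_j)$, and the dimension count $\dim A_j+\dim B_j=2\dim W_j$ is the same identity the paper uses.
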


\begin{proof}
The log-likelihood $\ell_Y$ is bounded from above if and only if $Y$ is not in the complex null cone under the left-right action of $\SL_{m_1}(\CC) \times \SL_{m_2}(\CC)$, by Theorem~\ref{thm:bigTheoremMatrixNormal}(b) and Proposition~\ref{prop:realVsComplexCapacity}. 
The latter is equivalent to the existence of subspaces $W_1 \subseteq \CC^{m_1}$ and $W_2 \subseteq \CC^{m_2}$ with $m_1 \dim_\CC W_2 > m_2 \dim_\CC W_1$ such that $Y_i W_2 \subseteq W_1$ for all $i=1,\ldots,n$, by \cite[Theorem 2.1]{BurginDraisma}.
This is the same condition as in the statement, except with complex subspaces. 
The real condition directly implies the complex one.
We now show the reverse implication, following an argument thanks to Jan Draisma.

Given complex subspaces $W_1 \subseteq \CC^{m_1}$ and $W_2 \subseteq \CC^{m_2}$ as above, let $V_j$ be the intersection of $W_j$ with $\RR^{m_j}$, and let $V_j'$ be the image of $W_j$ under the map that sends a complex vector to its real part.
Since $\mathsf{i}  V_j$ is the kernel of that map, where $\mathsf{i}^2 = -1$, we have $2 \dim_\CC W_j = \dim_\RR V_j + \dim_\RR V'_j$.
In particular, we either have $m_1 \dim V_2 > m_2 \dim V_1$ or $m_1 \dim V'_2 > m_2 \dim V'_1$.
Since both inclusions $Y_i V_2 \subseteq V_1$ and $Y_i V'_2 \subseteq V'_1$ hold for all $i=1, \ldots, n$,
either $(V_1, V_2)$ or $(V'_1, V'_2)$ are real subspaces  as in the statement.
\end{proof}

We now come to statistical implications of Theorem~\ref{thm:nullconeLeftRight}.

\begin{corollary}
\label{cor:knownMLEbound}
If $n < \frac{m_1}{m_2}$, then the log-likelihood function $\ell_Y$ is unbounded from above for every tuple of samples $Y \in (\RR^{m_1 \times m_2})^n$.
In particular, $\mltb(m_1,m_2) \geq \lceil \frac{m_1}{m_2} \rceil$.
\end{corollary}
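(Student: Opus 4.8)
The plan is to derive everything from the null-cone description in Theorem~\ref{thm:nullconeLeftRight}, which says that $\ell_Y$ fails to be bounded from above exactly when there are subspaces $V_1 \subseteq \RR^{m_1}$ and $V_2 \subseteq \RR^{m_2}$ with $m_1 \dim V_2 > m_2 \dim V_1$ and $Y_i V_2 \subseteq V_1$ for all $i$. I therefore only need to exhibit such a pair for an arbitrary tuple $Y$ under the hypothesis $n < \frac{m_1}{m_2}$.

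First I would take $V_2 = \RR^{m_2}$ to be the whole space and let $V_1 = \sum_{i=1}^n Y_i \RR^{m_2}$ be the span of the column spaces of the samples. The inclusions $Y_i V_2 \subseteq V_1$ then hold by construction. The key estimate is that each column space $Y_i \RR^{m_2}$ has dimension at most $m_2$, so that $\dim V_1 \leq n m_2$.

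Next I would verify the strict inequality. With $\dim V_2 = m_2$, the required condition $m_1 \dim V_2 > m_2 \dim V_1$ reduces to $m_1 > \dim V_1$. This follows directly from the hypothesis, since $n < \frac{m_1}{m_2}$ rearranges to $n m_2 < m_1$, giving $\dim V_1 \leq n m_2 < m_1$. Applying Theorem~\ref{thm:nullconeLeftRight} then yields that $\ell_Y$ is unbounded from above for \emph{every} tuple $Y$.

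For the bound on $\mltb$, I would argue that since unboundedness holds for every tuple whenever $n < \frac{m_1}{m_2}$ — and hence in particular generically — no integer $n < \frac{m_1}{m_2}$ can serve as a sample size at which the likelihood is generically bounded. Thus $\mltb(m_1,m_2)$ is at least the smallest integer that is at least $\frac{m_1}{m_2}$, namely $\lceil \frac{m_1}{m_2} \rceil$. I do not expect a genuine obstacle here: the core of the argument is a single dimension count once Theorem~\ref{thm:nullconeLeftRight} is available, and the only point needing a little care is translating the strict real inequality $n < \frac{m_1}{m_2}$ into the ceiling bound on the integer $\mltb$, where one checks uniformly the cases of $\frac{m_1}{m_2}$ being integral or not.
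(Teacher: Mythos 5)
Your proof is correct and follows essentially the same route as the paper: a one-line dimension count feeding into Theorem~\ref{thm:nullconeLeftRight}. The only (immaterial) difference is the choice of witness: you take $V_2 = \RR^{m_2}$ with $\dim V_1 \leq nm_2 < m_1$, while the paper takes $V_2$ one-dimensional with $\dim V_1 \leq n < \frac{m_1}{m_2}$; both pairs violate the inequality $m_1\dim V_2 > m_2 \dim V_1$ for exactly the same reason.
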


\begin{proof}
For any one-dimensional subspace $V_2 \subseteq \RR^{m_2}$,
the dimension of $V_1 := \sum_{i=1}^n Y_i V_2$ is at most $n$.
If $n < \frac{m_1}{m_2}$, Theorem~\ref{thm:nullconeLeftRight} implies that the log-likelihood $\ell_Y$ is unbounded.
\end{proof}

The result in this corollary also follows from \cite[Lemma 1.2]{DrtonKuriki}. 
We now characterize when the null cone fills the space of matrix tuples, which extends~\cite[Proposition 2.4]{BurginDraisma} from the space of complex matrix tuples to real matrix tuples.
For this, we begin by defining the cut-and-paste rank from~\cite[Definition 2.2]{BurginDraisma} over the real numbers.

\begin{definition}
\label{def:cprank}
The \emph{cut-and-paste rank} $\cp^{(n)}(a,b,c,d)$ of a tuple of positive integers $a$, $b$, $c$, $d$ and $n$ is the maximum rank of the $ab \times cd$ matrix $\sum_{i = 1}^n X_i \otimes Y_i$, as $X_i$ and $Y_i$ range over real matrices of sizes $c \times a$ and $d \times b$ respectively.
\end{definition}

\begin{remark}\label{rem:RealVsComplexCPrank}
Analogously to Definition~\ref{def:cprank} one can define $\cp^{(n)}_{\CC}(a,b,c,d)$ by letting the $X_i$ and $Y_i$ range over complex matrices, see \cite[Definition~2.2]{BurginDraisma}. The real and complex ranks agree, as follows. The condition for the rank of the matrix $\sum_{i = 1}^n X_i \otimes Y_i$ to drop is given by minors. Thus, $\cp^{(n)}_{\CC}(a,b,c,d)$ is witnessed on a Zariski-open subset of $W := (\CC^{c\times a})^n \times (\CC^{d\times b})^n$ and hence witnessed by some element in $(\RR^{c\times a})^n \times (\RR^{d\times b})^n$, as the latter is Zariski-dense in $W$.
\end{remark}
 
We use the cut-and-paste rank to give a necessary and sufficient  condition for the null cone under left-right action to fill the space of matrix tuples $(\RR^{m_1 \times m_2})^n$, i.e. for the log-likelihood to be always unbounded from above. As above, we take $m_1 \geq m_2$. Moreover, since we saw in Corollary~\ref{cor:knownMLEbound} that the likelihood is unbounded for $m_2 n < m_1$, it suffices to restrict to the range $m_2 \leq m_1 \leq n m_2$.

\begin{theorem}
\label{thm:nullconeFills}
Let $0 < m_2 \leq m_1 \leq n m_2$.
The log-likelihood $\ell_Y$ is unbounded from above for every tuple of samples $Y \in (\RR^{m_1 \times m_2})^n$
if and only if there exists $k \in \{ 1, \ldots, m_2\}$ such that $l = \lceil \frac{m_1}{m_2} k \rceil - 1$ satisfies both
\begin{align*}
m_1-l \leq n(m_2-k) \quad &\text{ and}   \\
\cp^{(n)}(a,b,c,d) = cd, \quad  &\text{ where} \quad
 (a,b,c,d) = (m_2-k,k,m_1-l,nk-l).
\end{align*}
\end{theorem}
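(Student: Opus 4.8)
The plan is to reformulate the statement as the claim that the \emph{null cone fills} the ambient space $(\RR^{m_1\times m_2})^n$, and then to transfer the corresponding complex characterization of Burgin--Draisma to the reals rather than redoing the dimension count from scratch. By Theorem~\ref{thm:bigTheoremMatrixNormal}(a), the log-likelihood $\ell_Y$ fails to be bounded from above exactly when $Y$ is unstable, i.e. when $Y$ lies in the null cone $\mathcal N_{\RR}$ of the left-right action of $\SL_{m_1}\times\SL_{m_2}$ on $(\RR^{m_1\times m_2})^n$. Hence the assertion ``$\ell_Y$ is unbounded for every tuple $Y$'' is equivalent to the equality $\mathcal N_{\RR} = (\RR^{m_1\times m_2})^n$. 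The target is therefore to show that this filling is governed by the stated cut-and-paste rank condition.

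First I would pass from $\RR$ to $\CC$. Let $\mathcal N_{\CC}$ denote the complex null cone inside $(\CC^{m_1\times m_2})^n$, which is Zariski closed as the common zero set of the non-constant homogeneous invariants. By Proposition~\ref{prop:realVsComplexCapacity} we have $\mathcal N_{\RR} = \mathcal N_{\CC}\cap(\RR^{m_1\times m_2})^n$. Now $(\RR^{m_1\times m_2})^n$ is Zariski dense in $(\CC^{m_1\times m_2})^n$, so if $\mathcal N_{\RR}$ fills the real space then $\mathcal N_{\CC}$ contains a Zariski-dense subset and, being Zariski closed, must equal all of $(\CC^{m_1\times m_2})^n$. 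The reverse implication is immediate from the intersection formula. Thus the real null cone fills its ambient space if and only if the complex null cone fills its ambient space.

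Next I would invoke \cite[Proposition 2.4]{BurginDraisma}, which characterizes precisely when the complex null cone fills $(\CC^{m_1\times m_2})^n$ in the range $m_2\le m_1\le nm_2$: there must exist $k\in\{1,\dots,m_2\}$ such that, setting $l=\lceil\frac{m_1}{m_2}k\rceil-1$, one has $m_1-l\le n(m_2-k)$ together with $\cp^{(n)}_{\CC}(a,b,c,d)=cd$ for $(a,b,c,d)=(m_2-k,\,k,\,m_1-l,\,nk-l)$. Finally, Remark~\ref{rem:RealVsComplexCPrank} supplies the field-independence $\cp^{(n)}(a,b,c,d)=\cp^{(n)}_{\CC}(a,b,c,d)$, so the complex condition is verbatim the stated real one. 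Chaining the three equivalences (unboundedness everywhere $\Leftrightarrow$ real filling $\Leftrightarrow$ complex filling $\Leftrightarrow$ the $\cp$ condition) then yields the theorem.

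I expect the main obstacle to be the density transfer in the second step, where one must take care that ``filling'' really is a Zariski-closed condition and that the identity $\mathcal N_{\RR}=\mathcal N_{\CC}\cap\RR^{N}$ (with $N=nm_1m_2$) is exactly what licenses jumping from a statement valid for all real tuples to one valid on a Zariski-dense set and hence everywhere over $\CC$. The genuinely hard mathematical content, namely the dimension count on the incidence variety of destabilizing subspace pairs $(V_1,V_2)$ whose tangent-space surjectivity reduces to a matrix of the shape $\sum_i X_i\otimes Y_i$ attaining full rank $cd$, is already carried out over $\CC$ in \cite{BurginDraisma}; the role of Remark~\ref{rem:RealVsComplexCPrank} is precisely to guarantee that this rank computation is insensitive to the ground field, so that no new combinatorial work is needed on the real side.
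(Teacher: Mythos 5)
Your overall strategy is exactly the paper's: reduce ``$\ell_Y$ unbounded for every $Y$'' to the real null cone filling $(\RR^{m_1\times m_2})^n$ via Theorem~\ref{thm:bigTheoremMatrixNormal}(a), transfer to $\CC$ using Proposition~\ref{prop:realVsComplexCapacity} together with the Zariski density of the real points and the Zariski closedness of $\mathcal{N}_{\CC}$, and then import the complex answer from Burgin--Draisma with Remark~\ref{rem:RealVsComplexCPrank} handling the field-independence of the cut-and-paste rank. That skeleton is correct.

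The gap is in your citation of \cite[Proposition~2.4]{BurginDraisma}. That proposition is not a ``the null cone fills if and only if\dots'' statement in the form you quote; it is a dimension formula for the individual strata
$Q_{k,l} = \lbrace Y \mid \exists\, V \subseteq \CC^{m_2},\ \dim V = k,\ \dim(\sum_i Y_i V) \leq l \rbrace$,
namely $\dim_{\CC} Q_{k,l} = n m_1 m_2 - \bigl( (m_1-l)(kn-l) - \cp^{(n)}_{\CC}(a,b,\tilde{c},d) \bigr)$ with $\tilde{c} = \min\lbrace m_1-l,\, n(m_2-k)\rbrace$, valid when $l < nk$. To reach the stated theorem one still must: (i) express $\mathcal{N}_{\CC}$ as the union of the $Q_{k,l}$ over $1 \leq k \leq m_2$ and $0 \leq l < \frac{m_1}{m_2}k$ (this is Theorem~2.1 of Burgin--Draisma, the complex analogue of Theorem~\ref{thm:nullconeLeftRight}); (ii) observe that the $Q_{k,l}$ increase with $l$, so it suffices to test $l = \lceil \frac{m_1}{m_2}k\rceil - 1$ for each $k$; and (iii) translate ``$Q_{k,l}$ fills'', i.e. $\cp^{(n)}(a,b,\tilde{c},d) = (m_1-l)(kn-l)$, into the two conditions of the theorem --- this last step is precisely where the separate inequality $m_1 - l \leq n(m_2-k)$ arises, since it is what forces $\tilde{c} = m_1 - l = c$ so that the rank condition can be written as $\cp^{(n)}(a,b,c,d) = cd$. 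Each of these steps is short, but they constitute the actual content of the proof beyond the transfer to $\CC$, and attributing them wholesale to the cited proposition leaves them unproved.
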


\begin{proof}
Let $\mathcal{N}_{\KK}$ be the null cone under the left-right action of $\SL_{m_1}(\KK) \times \SL_{m_2}(\KK)$ on $(\KK^{m_1 \times m_2})^n$, where $\KK \in \{ \RR, \CC\}$. We note that $\mathcal{N}_{\CC}$ is Zariski closed and that $(\RR^{m_1 \times m_2})^n$ is Zariski-dense in $(\CC^{m_1 \times m_2})^n$. Thus, $\mathcal{N}_{\RR}$ fills the space $(\RR^{m_1 \times m_2})^n$ if and only if $\mathcal{N}_{\CC}$ fills the space $(\CC^{m_1 \times m_2})^n$, by Proposition~\ref{prop:realVsComplexCapacity}. It therefore suffices to characterize when $\mathcal{N}_{\CC} = (\CC^{m_1 \times m_2})^n$. For this, define for natural numbers $k$ and $l$
\begin{equation*}
    Q_{k,l} := \left\lbrace (Y_1, \ldots, Y_n) \in (\CC^{m_1 \times m_2})^n \mid 
    \exists V \subseteq \CC^{m_2}: 
    \dim_{\CC} V = k,
    \dim_{\CC}(\sum_{i=1}^n Y_i V ) \leq l
    \right\rbrace.
\end{equation*}
The null cone $\mathcal{N}_{\CC}$ is the union of the $Q_{k,l}$ over $1 \leq k \leq m_2$ and $0 \leq l < \frac{m_1}{m_2} k$, 
by \cite[Theorem~2.1]{BurginDraisma}, which is the complex analogue of Theorem~\ref{thm:nullconeLeftRight}. We observe that the algebraic sets $Q_{k,l}$ get larger as $l$ increases. Hence, it suffices to consider if any of the $Q_{k,l}$ fills $(\CC^{m_1 \times m_2})^n$ as $k$ ranges over $1 \leq k \leq m_2$, where the corresponding $l$ is the largest integer strictly smaller than $\frac{m_1}{m_2}k$, i.e. $l = \lceil \frac{m_1}{m_2} k \rceil - 1$.

The assumption $m_1 \leq nm_2$ yields $l < nk$. Therefore, \cite[Proposition 2.4]{BurginDraisma} shows that
\begin{align*}
    \dim_{\CC} Q_{k,l} = n m_1 m_2 - \left( (m_1-l)(kn-l)- \cp^{(n)}_{\CC}(a,b,\tilde{c},d)  \right),
\end{align*}    
where $a = m_2 - k$, $b=k$, $\tilde{c} = \min \{ m_1 - l, n (m_2-k) \}$ and $d = kn-l$. By Remark~\ref{rem:RealVsComplexCPrank}, $\cp^{(n)}_{\CC}(a,b,\tilde{c},d) = \cp^{(n)}(a,b,\tilde{c},d)$. Thus, $Q_{k,l}$ equals $(\CC^{m_1 \times m_2})^n$ if and only if    
\begin{equation*}
    \cp^{(n)}(a,b,\tilde{c},d) = (m_1-l)(kn-l).
\end{equation*}
Finally, the latter equation is equivalent to
\begin{equation*}
    m_1 - l \leq n (m_2-k) \quad \text{ and } \quad \cp^{(n)}(a,b,\tilde{c},d) = \tilde{c}d,
\end{equation*}
since $\tilde{c} = \min \{ m_1 - l, n (m_2-k) \}$, $d = kn-l \geq 1$ and $\cp^{(n)}(a,b,\tilde{c},d) \leq \tilde{c}d$.
\end{proof}

In principle, Theorem~\ref{thm:nullconeFills} solves the problem of determining the maximum likelihood threshold $\mltb$, although in terms of the cut-and-paste rank. Hence, this gives statistical motivation for better understanding the cut-and-paste rank, e.g. by obtaining a general closed formula.

We use the above theorem to give a new upper bound for $\mltb$.

\begin{corollary}
\label{cor:newMLEbound}
Let $0 < m_2 \leq m_1$.  If 
\begin{equation}
\label{eq:newBound}
    n > \max_{1\leq k \leq m_2} \left(\frac{l}{k} + \frac{m_2-k}{m_1 - l}\right), \quad \text{ where } l = \left\lceil \frac{m_1}{m_2} k \right\rceil - 1,
\end{equation}
the log-likelihood $\ell_Y$
for a generic matrix tuple $Y \in (\RR^{m_1 \times m_2})^n$ is bounded from above. In other words, $\mltb \leq \left\lfloor \max \limits_{1\leq k \leq m_2} \left(\frac{l}{k} + \frac{m_2-k}{m_1 - l}\right) \right\rfloor +1$.
\end{corollary}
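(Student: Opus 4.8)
The plan is to use Theorem~\ref{thm:nullconeFills} to reduce the generic boundedness of $\ell_Y$ to the statement that the null cone does not fill the whole sample space $(\RR^{m_1\times m_2})^n$, and then to rule out each of the finitely many ways in which the null cone could fill. By Theorem~\ref{thm:bigTheoremMatrixNormal}(b), a tuple $Y$ has bounded log-likelihood exactly when it is semistable, i.e. when $Y$ lies outside the null cone. Since the null cone $\mathcal{N}_\RR = \mathcal{N}_\CC \cap \RR^{m_1 m_2}$ is cut out by polynomial conditions, it suffices to show that under hypothesis~\eqref{eq:newBound} it is a proper subset; then a generic tuple avoids it and $\ell_Y$ is bounded from above. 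By Theorem~\ref{thm:nullconeFills}, the null cone fills the space precisely when some $k\in\{1,\dots,m_2\}$ makes both displayed conditions hold, so I need to show that under~\eqref{eq:newBound} at least one of the two conditions fails for every $k$.

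First I would observe that it is enough to violate the second condition $\cp^{(n)}(a,b,c,d)=cd$, ignoring the first one entirely. By Definition~\ref{def:cprank} the cut-and-paste rank is the maximal rank of an $ab \times cd$ matrix, so it obeys the trivial bound $\cp^{(n)}(a,b,c,d)\le \min\{ab,cd\}\le ab$. Hence whenever $ab<cd$ we automatically get $\cp^{(n)}(a,b,c,d)\le ab<cd$, so the second condition cannot hold for that $k$.

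Next I would translate $ab<cd$ into the inequality on $n$. With $(a,b,c,d)=(m_2-k,\,k,\,m_1-l,\,nk-l)$ this reads $(m_2-k)k<(m_1-l)(nk-l)$. Since $l=\lceil \tfrac{m_1}{m_2}k\rceil-1<\tfrac{m_1}{m_2}k\le m_1$ gives $m_1-l\ge 1>0$, and since $k\ge 1$, I may divide through to rewrite this inequality equivalently as $n>\tfrac{l}{k}+\tfrac{m_2-k}{m_1-l}$. Therefore $n>\max_{1\le k\le m_2}\bigl(\tfrac{l}{k}+\tfrac{m_2-k}{m_1-l}\bigr)$ forces $ab<cd$, hence the failure of the second condition, for every $k$ simultaneously. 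This shows the null cone does not fill the space, so a generic tuple is semistable and $\ell_Y$ is bounded from above. The bound $\mltb\le \lfloor\max_{1\le k\le m_2}(\tfrac{l}{k}+\tfrac{m_2-k}{m_1-l})\rfloor+1$ then follows because the smallest integer strictly exceeding a real number $x$ is $\lfloor x\rfloor+1$, and the right-hand side of~\eqref{eq:newBound} depends only on $m_1,m_2$ and not on $n$.

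I do not expect a serious obstacle: the heart of the argument is simply the observation that the elementary rank bound $\cp^{(n)}\le ab$ already suffices, combined with a one-line rearrangement. The two points that warrant a little care are the passage from ``the null cone is a proper closed subset'' to ``a generic real tuple avoids it'', for which I would invoke that $\mathcal{N}_\CC$ is Zariski closed and $(\RR^{m_1\times m_2})^n$ is Zariski dense in $(\CC^{m_1\times m_2})^n$ (as used in Proposition~\ref{prop:realVsComplexCapacity} and in the proof of Theorem~\ref{thm:nullconeFills}), and the boundary case $k=m_2$, where $a=m_2-k=0$ forces $ab=0<cd$ and the second condition fails trivially, in agreement with the formula since there $\tfrac{l}{k}+\tfrac{m_2-k}{m_1-l}=\tfrac{m_1-1}{m_2}$.
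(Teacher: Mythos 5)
Your proof is correct and follows essentially the same route as the paper: reduce to Theorem~\ref{thm:nullconeFills} and defeat its second condition for every $k$ via the trivial bound $\cp^{(n)}(a,b,c,d)\le ab<cd$, which rearranges exactly to~\eqref{eq:newBound}. The one detail the paper makes explicit that you leave implicit is that the $k=m_2$ instance of~\eqref{eq:newBound} gives $n>\frac{m_1-1}{m_2}$, i.e.\ $nm_2\ge m_1$, which is needed up front to place you in the setting of Theorem~\ref{thm:nullconeFills} (and to guarantee $d=nk-l>0$ in your boundary case).
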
 

\begin{proof}
First, we observe that \eqref{eq:newBound} with $k=m_2$ yields
$n > \frac{m_1-1}{m_2}$. 
The latter is equivalent to $nm_2 \geq m_1$, so we are in the setting of Theorem~\ref{thm:nullconeFills}.
Using the notation in that theorem, we see that~\eqref{eq:newBound} is equivalent to every $k \in \{1, \ldots, m_2 \}$ satisfying
$cd > ab$.
In particular, for every such $k$ we have
$\cp^{(n)}(a,b,c,d) \leq ab < cd$, 
so by Theorem~\ref{thm:nullconeFills} the log-likelihood $\ell_Y$ cannot be unbounded from above for every tuple $Y$.
\end{proof}

Two simpler upper bounds, which are known in the statistics literature~\cite[Proposition~1.3, Theorem~1.4]{DrtonKuriki}, are obtained as follows.

\begin{corollary}
\label{cor:newMLEboundWeaker}
If  $n \geq \frac{m_1}{m_2} + \frac{m_2}{m_1}$, then the log-likelihood $\ell_Y$
for a generic matrix tuple $Y \in (\RR^{m_1 \times m_2})^n$ is bounded from above. In other words, $\mltb \leq \lceil \frac{m_1}{m_2} + \frac{m_2}{m_1} \rceil$.
\end{corollary}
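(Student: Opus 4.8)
The plan is to derive Corollary~\ref{cor:newMLEboundWeaker} as a special case of the new bound in Corollary~\ref{cor:newMLEbound}, rather than from scratch. Concretely, I would show that the simpler hypothesis $n \geq \frac{m_1}{m_2} + \frac{m_2}{m_1}$ forces the maximum in~\eqref{eq:newBound} to be strictly smaller than $n$ for every admissible $k$, so that Corollary~\ref{cor:newMLEbound} applies and yields boundedness of $\ell_Y$ for generic $Y$.

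First I would fix $k \in \{1, \ldots, m_2\}$ and set $l = \lceil \frac{m_1}{m_2} k\rceil - 1$, so that $\frac{m_1}{m_2}k - 1 \leq l < \frac{m_1}{m_2}k$. The goal is to bound the quantity $\frac{l}{k} + \frac{m_2 - k}{m_1 - l}$ from above by $\frac{m_1}{m_2} + \frac{m_2}{m_1}$. The first summand is easy: from $l < \frac{m_1}{m_2}k$ we immediately get $\frac{l}{k} < \frac{m_1}{m_2}$. The second summand is the delicate one, and the plan is to use the lower bound $l \geq \frac{m_1}{m_2}k - 1$, which gives $m_1 - l \leq m_1 - \frac{m_1}{m_2}k + 1 = \frac{m_1}{m_2}(m_2 - k) + 1$. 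Substituting this into $\frac{m_2 - k}{m_1 - l}$ and bounding, I would aim to show $\frac{m_2 - k}{m_1 - l} \leq \frac{m_2}{m_1}$. Since $\frac{m_2-k}{\frac{m_1}{m_2}(m_2-k)+1} \leq \frac{m_2-k}{\frac{m_1}{m_2}(m_2-k)} = \frac{m_2}{m_1}$ whenever $m_2 - k > 0$, and the $k = m_2$ case contributes a zero numerator, this inequality should follow cleanly once the boundary case is checked separately.

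I expect the main obstacle to be handling the ceiling function carefully and treating the edge cases, particularly $k = m_2$ (where $m_2 - k = 0$ and one must confirm the first summand alone stays below the target) and ensuring the denominators $m_1 - l$ and $k$ are genuinely positive in the stated range. One must also verify that combining the strict bound $\frac{l}{k} < \frac{m_1}{m_2}$ with the non-strict bound $\frac{m_2-k}{m_1-l} \leq \frac{m_2}{m_1}$ produces a strict inequality overall, so that $\max_k (\frac{l}{k} + \frac{m_2-k}{m_1-l}) < \frac{m_1}{m_2} + \frac{m_2}{m_1} \leq n$, which is exactly the hypothesis of Corollary~\ref{cor:newMLEbound}. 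Once this chain of inequalities holds uniformly in $k$, the conclusion $\ell_Y$ bounded above for generic $Y$ follows directly, and the $\mltb$ bound follows by taking the ceiling.

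As a sanity check before writing, I would confirm the claimed equivalence used implicitly at the start of the proof of Corollary~\ref{cor:newMLEbound}, namely that $n > \frac{m_1 - 1}{m_2}$ is equivalent to $n m_2 \geq m_1$ for integers, which places us in the regime $m_2 \leq m_1 \leq n m_2$ required by Theorem~\ref{thm:nullconeFills}; the hypothesis $n \geq \frac{m_1}{m_2} + \frac{m_2}{m_1} > \frac{m_1}{m_2} > \frac{m_1-1}{m_2}$ certainly secures this. The whole argument is elementary inequality manipulation, so the only real care needed is bookkeeping with the ceiling and the strict-versus-non-strict distinction.
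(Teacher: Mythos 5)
Your overall strategy is exactly the paper's: deduce the corollary from Corollary~\ref{cor:newMLEbound} by showing $\frac{l}{k} + \frac{m_2-k}{m_1-l} < \frac{m_1}{m_2} + \frac{m_2}{m_1}$ for every $k$. However, your treatment of the second summand contains a reversed inequality. You invoke the \emph{lower} bound $l \geq \frac{m_1}{m_2}k - 1$ to obtain the \emph{upper} bound $m_1 - l \leq \frac{m_1}{m_2}(m_2-k) + 1$, and then substitute this into the denominator of $\frac{m_2-k}{m_1-l}$ as if it produced an upper bound on the fraction. It does not: an upper bound on a positive denominator yields a \emph{lower} bound on the fraction, so your chain $\frac{m_2-k}{m_1-l} \leq \frac{m_2-k}{\frac{m_1}{m_2}(m_2-k)+1}$ runs in the wrong direction and the step fails as written.

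The repair is immediate and turns your argument into the paper's: use the \emph{upper} bound $l < \frac{m_1}{m_2}k$ (which you already use for the first summand) to get the \emph{lower} bound $m_1 - l > \frac{m_1}{m_2}(m_2-k)$ on the denominator, whence $\frac{m_2-k}{m_1-l} < \frac{m_2}{m_1}$ for $k < m_2$, while for $k = m_2$ the second summand vanishes and $\frac{l}{k} < \frac{m_1}{m_2} < \frac{m_1}{m_2} + \frac{m_2}{m_1}$ suffices. The paper's proof is precisely this one-line observation: $l < \frac{m_1 k}{m_2}$ implies $\frac{l}{k} + \frac{m_2-k}{m_1-l} < \frac{m_1}{m_2} + \frac{m_2}{m_1}$, after which Corollary~\ref{cor:newMLEbound} applies. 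Your remaining bookkeeping (the equivalence $n > \frac{m_1-1}{m_2} \Leftrightarrow nm_2 \geq m_1$ for integers, positivity of the denominators, and the strictness needed to satisfy the hypothesis $n > \max_k(\cdots)$ of Corollary~\ref{cor:newMLEbound}) is all correct.
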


\begin{proof}
For every $k \in \{1, \ldots, m_2 \}$ we have $l < \frac{m_1k}{m_2}$, which implies that
\begin{equation*}
    \frac{m_1}{m_2} + \frac{m_2}{m_1} > 
    \frac{l}{k} + \frac{m_2-k}{m_1-l}.
\end{equation*}
Thus, the assertion follows from Corollary~\ref{cor:newMLEbound}.
 \end{proof}
 
\begin{corollary}
\label{cor:divisible}
Let $m_2$ divide $m_1$. 
The log-likelihood $\ell_Y$
for a generic matrix tuple $Y \in (\RR^{m_1 \times m_2})^n$ is bounded from above
if and only if $n \geq \frac{m_1}{m_2}$.
In other words, $\mltb = \frac{m_1}{m_2}$.
\end{corollary}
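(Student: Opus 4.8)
The plan is to establish the two bounds $\mltb \ge \frac{m_1}{m_2}$ and $\mltb \le \frac{m_1}{m_2}$ and read the stated equivalence off from them. The lower bound is immediate: Corollary~\ref{cor:knownMLEbound} gives $\mltb \ge \lceil \frac{m_1}{m_2} \rceil$, and since $m_2 \mid m_1$ the ratio $\frac{m_1}{m_2}$ is already an integer, so $\lceil \frac{m_1}{m_2} \rceil = \frac{m_1}{m_2}$. That corollary also shows that for every $n < \frac{m_1}{m_2}$ the log-likelihood is unbounded for \emph{all} tuples, which settles one direction of the ``if and only if''.

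For the upper bound I would apply Theorem~\ref{thm:nullconeFills} at the boundary value $n = \frac{m_1}{m_2} =: r$, which is legitimate because $n m_2 = m_1$ puts us exactly in the admissible range $m_2 \le m_1 \le n m_2$. The key structural consequence of divisibility is that $l = \lceil \frac{m_1}{m_2} k \rceil - 1 = rk - 1$ exactly, with no rounding, for every $k \in \{1, \dots, m_2\}$. Substituting, I get $m_1 - l = r(m_2 - k) + 1$ and $n(m_2 - k) = r(m_2 - k)$, so the first required condition $m_1 - l \le n(m_2 - k)$ collapses to $r(m_2-k) + 1 \le r(m_2-k)$, i.e. $1 \le 0$, which fails for every $k$. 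Since no $k$ satisfies both conditions of Theorem~\ref{thm:nullconeFills}, the null cone does not fill $(\RR^{m_1 \times m_2})^n$; hence a generic tuple $Y$ is semistable and, by Theorem~\ref{thm:bigTheoremMatrixNormal}(b), $\ell_Y$ is bounded from above. This gives $\mltb \le r$.

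To upgrade the single value $n = r$ to all $n \ge r$, I would invoke monotonicity of boundedness in the sample size: membership in the null cone requires subspaces $V_1, V_2$ with $Y_i V_2 \subseteq V_1$ for all $i$, a condition that only becomes more restrictive as matrices are appended. Concretely, if $(Y_1, \dots, Y_n)$ avoids the null cone then so does $(Y_1, \dots, Y_n, Y_{n+1})$ for any $Y_{n+1}$, so a dense open set of $r$-tuples avoiding the null cone produces a dense open set of $n$-tuples avoiding it for every $n \ge r$. Together with the lower bound this yields $\mltb = \frac{m_1}{m_2}$ and the full equivalence.

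The point that requires care is that the general-purpose bound of Corollary~\ref{cor:newMLEbound} is not sharp here: when $m_2 \mid m_1$ and $m_2$ is large relative to $\frac{m_1}{m_2}$, the quantity being maximized in that corollary can exceed $\frac{m_1}{m_2}$ (for instance at an intermediate $k$ with $r < k < m_2$), so one cannot simply quote it. The argument must go through Theorem~\ref{thm:nullconeFills} directly and exploit that divisibility kills the ceiling, degenerating the first inequality to the impossible $1 \le 0$ simultaneously for all $k$; everything else is a routine substitution together with the elementary monotonicity observation.
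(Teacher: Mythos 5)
Your proof is correct, and its core coincides with the paper's: the lower bound comes from Corollary~\ref{cor:knownMLEbound}, and the upper bound comes from Theorem~\ref{thm:nullconeFills} by exploiting that divisibility removes the ceiling, so $l = \gamma k - 1$ exactly (writing $\gamma = m_1/m_2$) and, at $n = \gamma$, the inequality $m_1 - l \le n(m_2-k)$ degenerates to $1 \le 0$ for every $k$. Where you diverge is the case $n > \gamma$: the paper stays inside Theorem~\ref{thm:nullconeFills} and observes that then $a < c$ and $b < d$, so $\cp^{(n)}(a,b,c,d) \le ab < cd$ and the rank condition fails; you instead reduce to $n = \gamma$ via monotonicity of null-cone avoidance in the sample size, using the subspace characterization of Theorem~\ref{thm:nullconeLeftRight} (appending a matrix only adds constraints $Y_{n+1}V_2 \subseteq V_1$, and generic $n$-tuples project to generic $\gamma$-tuples under the open surjection that drops the extra samples). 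Both are one-line arguments; yours is slightly more elementary in that it avoids any cut-and-paste rank estimate for $n > \gamma$, while the paper's is self-contained within the hypotheses of Theorem~\ref{thm:nullconeFills}. Your closing observation that Corollary~\ref{cor:newMLEbound} cannot be quoted directly here is accurate and consistent with the paper's Table of thresholds (e.g.\ $(m_1,m_2)=(8,4)$, where $\alpha = 3$ but $\mltb = 2$), which is precisely why the paper, like you, argues through Theorem~\ref{thm:nullconeFills} rather than through that corollary.
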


\begin{proof}
If $n < \frac{m_1}{m_2}$, the log-likelihood is always unbounded from above by Corollary~\ref{cor:knownMLEbound}.
So we write $m_1 = \gamma m_2$ and assume $n \geq \gamma$.
For every $k \in \{1, \ldots, m_2 \}$, 
using the notation from Theorem~\ref{thm:nullconeFills}, we see that $l = \gamma k -1$ and $a < c$.
If $n > \gamma$, we also have that $b < d$, so
$\cp^{(n)}(a,b,c,d) \leq ab < cd$.
If $n=\gamma$, then $m_1 - l > n(m_2-k)$.
In either case, one of the two conditions in Theorem~\ref{thm:nullconeFills} is not satisfied, so $\ell_Y$ is generically bounded from above.
\end{proof}
 
 \begin{table}[h!tb]
 \caption{\label{tab:mlt} Bounds for the maximum likelihood threshold $\mltb$.
    $L = \lceil \frac{m_1}{m_2} \rceil$ is the lower-bound from Corollary~\ref{cor:knownMLEbound}, $U=\lceil \frac{m_1}{m_2} + \frac{m_2}{m_1} \rceil$ is the upper bound from Corollary~\ref{cor:newMLEboundWeaker}, and $\alpha$ is our new upper bound from Corollary~\ref{cor:newMLEbound}.}
 \begin{small}
 \begin{minipage}[t]{.33\linewidth}
      \centering
        \begin{tabular}[t]{ !{\vrule width 1pt} c  c | c  c  c  c !{\vrule width 1pt}} \Xhline{1pt}
        $m
        _1$ & $m_2$ & $L$ & $\mltb$  & $\alpha$  & $U$\\ 
				\hline 2 & 2 & 1 & 1 & 1 & 2 \\
				\hdashline
				 3 & 2 & 2 & 2 & 2 & 3 \\
				 3 & 3 & 1 & 1 & 2 & 2  \\
				 \hdashline
				 4 & 2 & 2 & 2 & 2 & 3 \\
				 4 & 3 & 2 & 2 & 2 & 3 \\
				 4 & 4 & 1 & 1 & 2 & 2 \\
				 \hdashline
				 5 & 2 & 3 & 3 & 3 & 3 \\
				 5 & 3 & 2&  3&  3&  3\\
				 5 & 4 &  2& 2 & 2 & 3 \\
				 5 & 5 & 1 & 1 & 2 & 2 \\
				 \hdashline
				 6 & 2 & 3 &3  & 3 & 4 \\
				 6 & 3 & 2 & 2 & 2 & 3 \\
				 6 & 4 & 2 & 2 & 2 & 3 \\
				 6 & 5 & 2 & 2 & 2 & 3 \\
				 6 & 6 & 1 & 1 & 2 & 2 \\
			\Xhline{1pt}
				 \end{tabular}
				    \end{minipage} \begin{minipage}[t]{.33\linewidth}
      \centering
        \begin{tabular}[t]{ !{\vrule width 1pt} c  c | c  c  c  c !{\vrule width 1pt}} \Xhline{1pt}
        $m
        _1$ & $m_2$ & $L$ & $\mltb$  & $\alpha$  & $U$\\ 
				\hline 
				 7 & 2 & 4 & 4 & 4  & 4 \\
			     7 & 3 & 3 & 3 & 3 & 3 \\
			     7 & 4& 2 &  3&  3&  3\\
			     7 & 5 & 2 & 3 & 3 & 3 \\
			     7 & 6 & 2 & 2 & 2 & 3 \\
			     7 & 7 & 1 & 1 & 2 & 2 \\
			     \hdashline
			      8 & 2 & 4 & 4 & 4 & 5 \\
			     8 & 3 & 3 & 3 & 3 & 4 \\
			     8 & 4 & 2 & 2 & 3 & 3 \\
			     8 & 5 & 2 & 3 & 3 & 3 \\
			     8 & 6 & 2 & 2 & 2 & 3 \\
			     8 & 7 & 2 & 2 & 2 & 3 \\
			     8 & 8 & 1 & 1 & 2 & 2 \\
			    \hdashline 9 & 2 & 5 & 5 & 5 & 5  \\
			    9 & 3 & 3 & 3 & 3 & 4 \\
			\Xhline{1pt}
				 \end{tabular}
				    \end{minipage}%
    \begin{minipage}[t]{.33\linewidth}
      \centering
       \begin{tabular}[t]{  !{\vrule width 1pt} c  c | c c  c  c  !{\vrule width 1pt}} \Xhline{1pt}  $m
        _1$ & $m_2$ & $L$ & $\mltb$  & $\alpha$  & $U$\\ 	
                \hline 
			     
			     9 & 4 & 3 & 3 & 3 & 3 \\
			     9 & 5 & 2 & 3 & 3 & 3 \\
			     9 & 6 & 2 & 2 & 2 & 3 \\
			     9 & 7 & 2 & 3 & 3 & 3 \\
			     9 & 8 & 2 & 2 & 2 & 3 \\
			     9 & 9 & 1 & 1 & 2 & 2 \\
			    \hdashline 10 & 2 & 5 & 5 & 5 & 6  \\
			     10 & 3 & 4 & 4 & 4 & 4 \\
			     10 & 4 & 3 & 3 & 3 & 3 \\
			     10 & 5 & 2 & 2 & 3 & 3 \\
			     10 & 6 & 2 & 3 & 3 & 3 \\
			     10 & 7 & 2 & 3 & 3 & 3 \\
			     10 & 8 & 2 & 2 & 2 & 3 \\
			     10 & 9 & 2 & 2 & 2 & 3 \\
			     10 & 10 & 1 & 1 &2 & 2 \\
				\Xhline{1pt} \end{tabular}
				    \end{minipage}
				    \end{small}
\end{table}

In Table \ref{tab:mlt} we list the maximum likelihood threshold $\mltb$ for boundedness  of the log-likelihood for small values of $m_1, m_2$, and compare with the bounds discussed above. 
We observe that there are cases where our upper bound 
\[\alpha = \left\lfloor \max \limits_{1\leq k \leq m_2} \left(\frac{l}{k} + \frac{m_2-k}{m_1 - l}\right) \right\rfloor +1, \quad \text{ where } l = \left\lceil \frac{m_1}{m_2} k \right\rceil - 1,
\]
is strictly better than the simple upper bound $U =\lceil \frac{m_1}{m_2} + \frac{m_2}{m_1} \rceil$, e.g.  when $(m_1,m_2)=(3,2)$. 
In most cases our bound $\alpha$ matches the lower bound $L=\lceil \frac{m_1}{m_2} \rceil$, so that we can determine $\mlt_b$.
In addition, when $m_2 | m_1$, one can use Corollary \ref{cor:divisible} to determine $\mltb$ even if the bounds $L$ and $\alpha$ do not coincide, such as in $(m_1,m_2)=(8,4)$ or in the square cases $m_1=m_2$. The rest of the values of $\mltb$ can be filled from \cite[Table 1]{DrtonKuriki}. 
We highlight the case $(m_1, m_2) = (8,3)$:
the maximum likelihood threshold $\mltb = 3$ was computed in~\cite{DrtonKuriki} via Gr\"obner bases, 
but it is not covered by the general bounds in~\cite{DrtonKuriki}.
Nevertheless, our bound $\alpha$ determines this case.

\subsection{Uniqueness of the MLE via stability}

We compare conditions for stability with conditions for the uniqueness of the MLE. We saw in Example~\ref{ex:PolystableNotStableUniqueMLE} that stability of a real matrix tuple $Y$ under left-right action of $\SL_{m_1}(\RR) \times \SL_{m_2}(\RR)$ is not equivalent to uniqueness of the MLE given $Y$. 
However, such an equivalence holds for complex Gaussian models, by  Theorem~\ref{thm:MLEversusStabilityComplex}.
Matrix normal models over the complex numbers are induced by the left-right action of $\SL_{m_1}(\CC) \times \SL_{m_2}(\CC)$ on $(\CC^{m_1 \times m_2})^n$.
Hence we obtain conditions for the uniqueness of the MLE given $Y \in (\CC^{m_1 \times m_2})^n$ from characterizing the stability of $Y$ under the left-right action by $\SL_{m_1}(\CC) \times \SL_{m_2}(\CC)$.
Characterizing this stability is a special case of the setting studied in~\cite{King}.
From this, we obtain the following theorem, which we prove in Appendix~\ref{sec:appendix}. 

\begin{theorem}
\label{thm:king}
Consider the left-right action of $\SL_{m_1}(\CC) \times \SL_{m_2}(\CC)$ on $(\CC^{m_1 \times m_2})^n$,
and a tuple $Y \in (\CC^{m_1 \times m_2})^n$ of $n$ samples from a complex matrix normal model. 
The following are equivalent:
\begin{itemize}
    \item[(a)] the complex MLE given $Y$ exists uniquely;
    \item[(b)] the matrix tuple $Y$ is stable; 
    \item[(c)] the matrix $(Y_1 | \ldots | Y_n) \in \CC^{m_1 \times n m_2}$ has rank $m_1$, and
    $
        m_2 \dim V_1 > m_1 \dim V_2
    $
holds for all subspaces  $V_1 \subseteq \CC^{m_1}$, $\lbrace 0 \rbrace \subsetneq V_2 \subsetneq \CC^{m_2}$ that satisfy $Y_i V_2 \subseteq V_1$ for all $i=1,\ldots,n$.
\end{itemize}
\end{theorem}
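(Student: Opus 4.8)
The plan is to establish the two equivalences (a)$\Leftrightarrow$(b) and (b)$\Leftrightarrow$(c) separately, the first by citation and the second via King's numerical criterion for quiver representations. For (a)$\Leftrightarrow$(b): the complex matrix normal model is the complex Gaussian group model attached to the group $G = \varrho(\GL_{m_1}(\CC) \times \GL_{m_2}(\CC))$ with $\varrho(g_1,g_2) = g_1 \otimes g_2$, which is Zariski closed, self-adjoint, and closed under non-zero complex scalar multiples. Hence Theorem~\ref{thm:MLEversusStabilityComplex}(d) applies and says the MLE is unique if and only if $Y$ is stable under $\GSL^+$. As in the proof of Theorem~\ref{thm:bigTheoremMatrixNormal}, the map $\varrho$ has finite kernel on $\SL_{m_1}(\CC) \times \SL_{m_2}(\CC)$ and its image shares the identity component of $\GSL^+$, so the four stability notions of Definition~\ref{def:StabilityNotions} coincide for $\SL_{m_1}(\CC) \times \SL_{m_2}(\CC)$ and for $\GSL^+$, cf.\ Remark~\ref{rem:choiceOfGroup}. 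This yields (a)$\Leftrightarrow$(b).

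For (b)$\Leftrightarrow$(c) I would read $Y$ as a representation of a quiver. Regard each $Y_i$ as a linear map $\CC^{m_2} \to \CC^{m_1}$, so that $Y$ is a representation of the $n$-arrow generalized Kronecker quiver with dimension vector $(m_2,m_1)$ at (source, target); the left-right action of $\SL_{m_1}(\CC)\times\SL_{m_2}(\CC)$ is exactly the base-change action, where the transpose in~\eqref{eq:SLaction} merely relabels $\SL_{m_2}$ and changes neither orbits nor stability. Subrepresentations are precisely pairs of subspaces $V_2 \subseteq \CC^{m_2}$, $V_1 \subseteq \CC^{m_1}$ with $Y_i V_2 \subseteq V_1$ for all $i$. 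The relevant weight is $\theta = (m_1, -m_2)$ on (source, target): its total weight on the whole representation is $m_1 m_2 - m_2 m_1 = 0$, i.e.\ $\theta$ is trivial on the scalars $(\lambda I, \lambda I)$ acting trivially on $Y$, and the $\theta$-weight of a subrepresentation $(V_2,V_1)$ is $m_1 \dim V_2 - m_2 \dim V_1$.

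By King's criterion~\cite{King}, $Y$ is $\theta$-stable if and only if every proper nonzero subrepresentation has strictly negative $\theta$-weight, that is $m_2 \dim V_1 > m_1 \dim V_2$. I would then split this according to $V_2$. Subrepresentations with $V_2 = 0$ impose only $\dim V_1 > 0$, which is automatic. For $V_2 = \CC^{m_2}$ the inclusion forces $V_1 \supseteq \sum_i Y_i \CC^{m_2}$, and a proper such $V_1$ would require $\dim V_1 > m_1$, which is impossible; thus $\theta$-stability fails exactly when $\sum_i Y_i \CC^{m_2} \subsetneq \CC^{m_1}$, i.e.\ when $(Y_1 \mid \cdots \mid Y_n)$ has rank less than $m_1$. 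This produces the rank condition of (c). The remaining subrepresentations, with $\{0\} \subsetneq V_2 \subsetneq \CC^{m_2}$, give precisely the strict inequality $m_2 \dim V_1 > m_1 \dim V_2$ in (c), the binding case being $V_1 = \sum_i Y_i V_2$ with all larger $V_1$ automatic. Assembling the three cases yields (c); this cross-checks against Theorem~\ref{thm:nullconeLeftRight}, whose instability condition $m_1 \dim V_2 > m_2 \dim V_1$ is the negation of the weight inequality above.

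The step I expect to be the main obstacle is matching King's notion of $\theta$-stability---framed as GIT stability for $\GL_{m_1}(\CC) \times \GL_{m_2}(\CC)$ linearized by the character $\chi_\theta$, on the quotient by the scalars acting trivially---with stability under $\SL_{m_1}(\CC)\times\SL_{m_2}(\CC)$ in the Kempf-Ness sense of Definition~\ref{def:StabilityNotions} used throughout the paper. The reconciliation is standard but must be pinned down: $\theta$ is exactly the character trivial on the trivially-acting scalars $(\lambda I, \lambda I)$, and the one-parameter subgroups of $\SL_{m_1} \times \SL_{m_2}$ are cut out by the trace-zero conditions over which King's Hilbert-Mumford function is evaluated, so the numerical criterion for the $\SL \times \SL$-action coincides with King's $\theta$-criterion. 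In particular the $\theta$-stable points, characterized by King as the simple objects, are exactly the points with closed $\SL \times \SL$-orbit and finite $\SL \times \SL$-stabilizer, i.e.\ the stable points of Definition~\ref{def:StabilityNotions}. Making this passage from the projective (character $\theta$) to the affine (Kempf-Ness) picture precise, and thereby justifying that King's framework outputs our notion of stability, is the crux of the argument.
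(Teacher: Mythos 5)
Your overall route is the same as the paper's: (a)$\Leftrightarrow$(b) by specializing Theorem~\ref{thm:MLEversusStabilityComplex}(d) to the group $\varrho(\GL_{m_1}(\CC)\times\GL_{m_2}(\CC))$ and passing to $\SL_{m_1}(\CC)\times\SL_{m_2}(\CC)$ via the finite-kernel/identity-component argument, and (b)$\Leftrightarrow$(c) by reading $Y$ as a representation of the $n$-arrow Kronecker quiver and invoking King's numerical criterion. Your unwinding of the subrepresentation inequality into the three cases $V_2=0$, $V_2=\CC^{m_2}$, and $0\subsetneq V_2\subsetneq\CC^{m_2}$ is correct and matches the paper (your weight is the negative of the paper's $\theta=(m_2,-m_1)$, but you apply the correspondingly flipped sign convention, so the resulting inequality $m_2\dim V_1>m_1\dim V_2$ agrees).

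The genuine gap is exactly the step you flag as the crux and then leave as ``standard but must be pinned down'': identifying King's $\chi_\theta$-stability with stability of $Y$ under $\SL_{m_1}(\CC)\times\SL_{m_2}(\CC)$ in the sense of Definition~\ref{def:StabilityNotions}. This is where the paper does real work. King's notion is phrased via the $\GL(\alpha)$-action on $V\times\CC$ twisted by $\chi_\theta^{-1}$ (closedness of the orbit of $\hat Y=(Y,1)$ and finiteness of $G_{\hat Y}/\Delta$), and the paper's Lemma~\ref{lem:RelationToKing} gives an explicit orbit and orbit-closure correspondence between that action and the affine $\SL\times\SL$-action on $V$, using the rescaling $h=(\det(g_1)^{-1/m_1}g_1,\,\det(g_2)^{-1/m_2}g_2)$; the finiteness of stabilizers is then transferred through the induced isomorphism $G_{\hat Y}/\Delta\cong H_Y$. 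Your proposed shortcut via Hilbert--Mumford over trace-zero one-parameter subgroups does not by itself close this: the general fact relating $\chi$-stability to ordinary stability under the kernel of the character would give you stability under $\ker\chi_\theta=\{(g_1,g_2):\det(g_1)^{m_2}=\det(g_2)^{m_1}\}$, which is strictly larger than $\SL_{m_1}\times\SL_{m_2}$, so a further (easy but necessary) reduction is still required, and closedness of orbits is not directly a Hilbert--Mumford statement in the non-projective, affine setting anyway. A minor additional slip: $\theta$-stable representations are not ``the simple objects'' of the representation category, only the simple objects of the abelian subcategory of $\theta$-semistable representations of slope zero; this does not affect your use of the numerical criterion, but the phrasing is misleading.
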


We note the similarity with the conditions  that characterize semistability in Theorem~\ref{thm:nullconeLeftRight}. 
However, while Theorem~\ref{thm:nullconeLeftRight} 
holds both over $\RR$ and $\CC$, the same cannot be true for Theorem~\ref{thm:king}.
In fact, the real analog of Theorem~\ref{thm:king}(c) is shown to characterize uniqueness of the MLE in~\cite[Theorems~3.1(ii) and 3.3(ii)]{DrtonKuriki}, 
which is not equivalent to stability by Example~\ref{ex:PolystableNotStableUniqueMLE}.

\subsection{The moment map}
In this section we recall the condition for the moment map for the action of $\SL_{m_1} \times \SL_{m_2}$ to vanish at a matrix tuple. By Kempf Ness, Theorem~\ref{thm:KempfNess}(a), this gives the condition to be at a point of minimal norm in the orbit.

The tangent space of $\SL_{m_i}$ at the identity matrix consists of all matrices with trace zero.
The moment map at $Y \in (\RR^{m_1 \times m_2})^n$
is the differential of 
$(g_1, g_2) \mapsto \Vert (g_1, g_2) \cdot Y \Vert^2$
at the pair $(I_{m_1}, I_{m_2})$ of identity matrices, i.e.
\begin{align}
\label{eq:momentMapMatrixNormal}    \begin{split}
    \mu(Y): T_{I_{m_1}} \SL_{m_1} \times T_{I_{m_2}} \SL_{m_2} &\longrightarrow  \RR \\
    (\dot g_1, \dot g_2) &\longmapsto  
    2 \sum_{i=1}^n \tr \left( (\dot g_1 Y_i + Y_i \dot g_2\T) Y_i\T \right).
\end{split}
\end{align}

\begin{theorem}[Kempf-Ness theorem for $\SL \times \SL$ action]
\label{thm:KNmatrixNormal}
Consider the left-right action of $\SL_{m_1}\times\SL_{m_2}$ on the space of matrix tuples $(\RR^{m_1 \times m_2})^n$.
A matrix tuple is semistable (resp. polystable) if and only if there is a non-zero matrix tuple $Y$ in its orbit closure (resp. orbit) where the moment map $\mu$ vanishes, i.e.
\begin{equation*}
    \exists \, c_1, c_2 > 0 : 
    \sum_{i=1}^n Y_i Y_i\T = c_1 I_{m_1} \text{ and }
    \sum_{i=1}^n Y_i\T Y_i = c_2 I_{m_2}.
\end{equation*}
\end{theorem}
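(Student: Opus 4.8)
The plan is to reduce the concrete statement to the abstract Kempf-Ness theorem (Theorem~\ref{thm:KempfNess}(e),(f)) and then carry out a purely linear-algebraic translation of the condition $\mu(Y)=0$ into the two scalar-matrix equations. First I would check that the hypotheses of Kempf-Ness are in force for the left-right action: as recorded in the proof of Theorem~\ref{thm:bigTheoremMatrixNormal}, the image of $\SL_{m_1}\times\SL_{m_2}$ under the diagonal embedding into $\GL\big((\RR^{m_1\times m_2})^n\big)$ is Zariski closed and self-adjoint, and the stability notions for $\SL_{m_1}\times\SL_{m_2}$ agree with those for its image since the kernel of $\varrho$ is finite. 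Granting this, Theorem~\ref{thm:KempfNess}(f) states that a tuple is semistable exactly when its orbit closure contains a non-zero tuple $Y$ with $\mu(Y)=0$, and part~(e) gives the analogous statement for polystability with the orbit closure replaced by the orbit. Hence the only remaining content is to show that, for a non-zero tuple $Y$, the vanishing $\mu(Y)=0$ is equivalent to the two displayed equations.

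Next I would compute directly from the explicit formula \eqref{eq:momentMapMatrixNormal}. Setting $A:=\sum_{i=1}^n Y_i Y_i\T$ and $B:=\sum_{i=1}^n Y_i\T Y_i$ and using cyclicity of the trace, the functional splits as $\mu(Y)(\dot g_1,\dot g_2)=2\,\tr(\dot g_1 A)+2\,\tr(\dot g_2\T B)$, where $\dot g_1$ and $\dot g_2$ range over the traceless matrices $T_{I_{m_1}}\SL_{m_1}$ and $T_{I_{m_2}}\SL_{m_2}$. Because the two summands depend on disjoint variables, setting one of $\dot g_1,\dot g_2$ to zero shows that $\mu(Y)$ is the zero functional if and only if both $\tr(X A)=0$ for every traceless $X\in\RR^{m_1\times m_1}$ and $\tr(X' B)=0$ for every traceless $X'\in\RR^{m_2\times m_2}$ (for the second condition one uses that $\dot g_2\T$ also ranges over all traceless matrices).

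Then I would identify these vanishing conditions. Under the non-degenerate pairing $\langle X,M\rangle=\tr(XM)$ on square matrices, the orthogonal complement of the space of traceless matrices is exactly the line of scalar matrices, by a dimension count ($m^2-(m^2-1)=1$). Hence $\tr(XA)=0$ for all traceless $X$ forces $A=c_1 I_{m_1}$, and likewise $B=c_2 I_{m_2}$; the converse is immediate since $\tr(X\cdot cI)=c\,\tr X=0$ for traceless $X$. Positivity of $c_1,c_2$ is then automatic: $A$ and $B$ are positive semidefinite, and for the non-zero witness $Y$ supplied by Kempf-Ness one has $\tr A=\tr B=\|Y\|^2>0$, so $c_1=\|Y\|^2/m_1>0$ and $c_2=\|Y\|^2/m_2>0$.

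I do not anticipate a genuine obstacle here, since all of the conceptual work is absorbed by the cited Kempf-Ness theorem and what remains is a routine computation. The one point requiring a little care is the simultaneous treatment of the two slots of the moment map: one must observe that the two trace terms vary independently, so that the single functional equation $\mu(Y)=0$ decouples into the two separate scalar-matrix conditions on $A$ and $B$.
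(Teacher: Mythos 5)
Your proposal is correct and follows essentially the same route as the paper: the paper's proof consists precisely of the rewriting $\mu(Y)(\dot g_1,\dot g_2)=2\,\tr\bigl(\dot g_1 \sum_i Y_iY_i\T\bigr)+2\,\tr\bigl(\dot g_2\T \sum_i Y_i\T Y_i\bigr)$, leaving the appeal to Theorem~\ref{thm:KempfNess}(e),(f), the identification of the annihilator of the traceless matrices with the scalar matrices, and the positivity of $c_1,c_2$ implicit. You have simply spelled out those routine steps, all correctly.
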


\begin{proof}
This follows from rewriting~\eqref{eq:momentMapMatrixNormal} as
\begin{align*}
\hspace*{30mm}
    (\dot{g}_1,\dot{g}_2) \longmapsto
    2\, \tr \left( \dot{g}_1 \sum_{i=1}^n Y_i Y_i\T \right) + 2 \,\tr \left( \dot{g}_2\T \sum_{i=1}^n Y_i\T Y_i \right).
    \hspace*{30mm}
    \qedhere
\end{align*}
\end{proof}

\subsection{Scaling algorithms for the MLE}
\label{sec:matrixnormalalg}
In this section, we describe algorithmic consequences of the connection between invariant theory and maximum likelihood estimation. 
We present an algorithm for ML estimation that is well-known in statistics, and 
connect it to an algorithm in invariant theory; see the left hand side of Figure~\ref{fig:algorithms}. The connection allows us to give a complexity analysis of the statistics algorithm.
The algorithm in statistics is the flip-flop algorithm, which involves the group $\GL_{m_1} \times \GL_{m_2}$, while the invariant theory algorithm is operator scaling for the left-right action of $\SL_{m_1} \times \SL_{m_2}$.
We begin by recalling these algorithms.

\subsubsection{Operator scaling and the flip-flop algorithm} Operator scaling, see the top left in Figure~\ref{fig:algorithms}, solves the norm minimization problem for the left-right action of $\SL_{m_1}(\CC) \times \SL_{m_2}(\CC)$ on the space of matrix tuples $(\CC^{m_1 \times m_2})^n$. 
From an invariant theory perspective, operator scaling was first studied in \cite{GurvitsOperatorScaling}, and \cite{GGOWoperatorScaling} showed that it yields a polynomial time algorithm for null cone membership. The method was generalized to tuples of tensors in \cite[Algorithm~1]{PeterAvi1}. 

The \emph{flip-flop algorithm}~\cite{dutilleul1999mle,lu2005likelihood}, see the bottom left of Figure~\ref{fig:algorithms}, is an alternating maximization procedure
to find an MLE in a matrix normal model.
It can be thought of as a Gaussian version of IPS for matrix normal models, since one alternatingly updates the estimates in each marginal.
If we consider $\Psi_2$ to be fixed, the log-likelihood in~\eqref{eqn:matrixnormallikelihood} becomes, up to constants,  
\begin{equation*}
    \ell_Y(\Psi_1) = m_2 \left[ 
    \log \det(\Psi_1) - \tr \left( 
    \Psi_1 \cdot \frac{1}{nm_2} \sum_{i=1}^n Y_i \Psi_2 Y_i\T
    \right)
    \right].
\end{equation*}
Maximizing the log-likelihood with respect to $\Psi_1$ reduces to the case of a standard multivariate Gaussian model as in~\eqref{eqn:gaussianlikelihood}.
The unique maximizer over the positive definite cone is the inverse, if it exists, of the matrix $\frac{1}{nm_2} \sum_{i=1}^n Y_i \Psi_2 Y_i\T$.
In the same way, we can fix $\Psi_1$ and maximize the log-likelihood with respect to $\Psi_2$. Iterating these two steps gives the algorithm.

\begin{algorithm}
\caption{Flip-flop}
\label{alg:flipflop}
\begin{algorithmic}[1]
\REQUIRE{$Y_1, \ldots, Y_n \in \RR^{m_1 \times m_2}$, $N \in \ZZ_{>0}$.}
\ENSURE{an approximation of an MLE, if it exists.}
\STATE{Initialize $\Psi_2 := I_{m_2}$.}
\FOR{$k=1$ \TO $N$}
\STATE the following pair of updates
\begin{align}
     \label{eq:updateMatrixNormal}
        \begin{split}
                     \Psi_1 &:= \left( \frac{1}{nm_2} \sum_{i=1}^n Y_i \Psi_2 Y_i\T \right)^{-1} \\
         \Psi_2 &:= \left( \frac{1}{nm_1} \sum_{i=1}^n Y_i\T \Psi_1 Y_i \right)^{-1}.
        \end{split}
     \end{align}
\ENDFOR
\RETURN $\Psi_1 \otimes \Psi_2$.
\end{algorithmic}
\end{algorithm}

We now compare operator scaling with the flip-flop algorithm. The scaling algorithm in~\cite[Algorithm 1]{PeterAvi1} gives, when specializing from tensors to matrices, the same procedure as Algorithm~\ref{alg:flipflop}, up to scaling with different constants in the update steps~\eqref{eq:updateMatrixNormal}.
In~\cite[Algorithm~1]{PeterAvi1}, the matrices $\Psi_1$ and $\Psi_2$ in~\eqref{eq:updateMatrixNormal} are restricted to have determinant one, in order to stay in the $\SL_{m_1} \times \SL_{m_2}$ orbit of $Y$. In comparison, Algorithm~\ref{alg:flipflop} has constants chosen to minimize the outer infimum in~\eqref{eqn:doubleinf}.

Although the algorithm in~\cite{PeterAvi1} is defined over the complex numbers,
when restricting to real inputs operator scaling only involves computations over the reals. This allows the computation of MLEs (if they exist) in the real matrix normal model via \eqref{eqn:doubleinf}, since the capacity of a real matrix tuple is the same under the action of $\SL_{m_1}(\RR) \times \SL_{m_2}(\RR)$ as under the action of $\SL_{m_1}(\CC) \times \SL_{m_2}(\CC)$, see Proposition~\ref{prop:realVsComplexCapacity}.

\subsubsection{Convergence}
In~\cite{PeterAvi1}, the authors give conditions for being in the null cone, based on the convergence of their Algorithm 1.
Specializing to a matrix tuple, to connect to the flip-flop algorithm, their results combine with ours to show the following. If an update step cannot be computed because one of the matrices in~\eqref{eq:updateMatrixNormal} cannot be inverted, then the matrix tuple $Y$ is unstable under the  action of $\SL_{m_1}(\CC) \times \SL_{m_2}(\CC)$, and therefore also under the real action of $\SL_{m_1}(\RR) \times \SL_{m_2}(\RR)$, by Proposition~\ref{prop:realVsComplexCapacity}. This implies that the log-likelihood $\ell_Y$ is unbounded, by Theorem~\ref{thm:bigTheoremMatrixNormal}(a).
Otherwise, the sequence of terms
$(\Psi_1^{\nicefrac{1}{2}}, \Psi_2^{\nicefrac{1}{2}}) \cdot Y$ converges, possibly to infinity.
We now consider the possible cases that can arise in this limit, by comparing to operator scaling, using the fact that the constants in the flip-flop algorithm minimize the outer infimum in~\eqref{eqn:doubleinf}. 
 
If the sequence  $(\Psi_1^{\nicefrac{1}{2}}, \Psi_2^{\nicefrac{1}{2}}) \cdot Y$ converges to zero or infinity, then the log-likelihood $\ell_Y$ is unbounded. Otherwise, the sequence  converges to a matrix tuple of  positive norm in the orbit closure, where the moment map~\eqref{eq:momentMapMatrixNormal} vanishes, and $Y$ is semistable. Here, two further possibilities can arise. The first possibility occurs when the matrix tuple $Y$ is polystable. Then the minimal norm is attained at an element of the group $\SL_{m_1} \times \SL_{m_2}$, and the flip-flop algorithm converges to an MLE;  see~\eqref{eqn:doubleinf}.
The second possibility occurs when $Y$ is semistable but not polystable. Then, the flip-flop algorithm diverges by the following remark.

\begin{remark}\label{rem:noExtendedMLE}
If the matrix tuple 
$Y$ is semistable but not polystable under the left-right action of $\SL_{m_1} \times \SL_{m_2}$, then the likelihood $L_Y$ (equivalently the log-likelihood $\ell_Y$) is bounded from above, but does not attain its supremum. In this case, any sequence $\Psi_N := (\Psi_{1,N} \otimes \Psi_{2,N})$ of concentration
matrices with
    \begin{align*}
        \lim_{N \to \infty} L_Y(\Psi_{1,N} \otimes \Psi_{2,N}) = \sup L_Y > 0
    \end{align*}
diverges. Indeed, otherwise the limit $\Psi_{\infty}$ would be rank-deficient, as the matrix normal model is closed in $\PD_{m_1 m_2}$. Then $\det(\Psi_\infty)=0$ yields the contradiction $\sup L_Y = L_Y(\Psi_{\infty}) = 0$.
\end{remark}

\subsubsection{Complexity} We use known results to derive a complexity analysis for the flip-flop algorithm.
In~\cite{PeterAvi1}, the authors prove convergence of their Algorithm~1, which solves the null cone membership problem up to an approximation parameter $\varepsilon > 0$. For tuples of tensors, choosing $\varepsilon$ exponentially small in the dimension of the tensor space yields a deterministic test for null cone membership with exponential running time, see \cite[Theorem~3.8]{PeterAvi1}.
When specializing to tuples of matrices, i.e. to operator scaling, it suffices to choose $\varepsilon$ polynomially small. Thus for operator scaling, \cite[Algorithm~1]{PeterAvi1} recovers the polynomial time algorithm for the null cone membership problem from \cite{GGOWoperatorScaling}. We adapt~\cite[Theorem~1.1]{PeterAvi1}
to our notation 
to derive the following.

\begin{theorem}\label{thm:flipflopcomplexity}
Given $\varepsilon > 0$ and
a matrix tuple $Y \in (\ZZ^{m_1 \times m_2})^n$
with matrix entries of bit size bounded by $b$, 
after a number of steps that is polynomial in $(nm_1m_2, b, \nicefrac{1}{\varepsilon})$,
the flip-flop algorithm either 
identifies that the log-likelihood $\ell_Y$ is unbounded
or
finds $(\Psi_1, \Psi_2) \in \PD_{m_1} \times \PD_{m_2}$
such that the matrix tuple $(\Psi_1^{\nicefrac{1}{2}}, \Psi_2^{\nicefrac{1}{2}}) \cdot Y$
is $\varepsilon$-close to a matrix tuple where the moment map~\eqref{eq:momentMapMatrixNormal} vanishes.
\end{theorem}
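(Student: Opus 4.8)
The plan is to transfer the complexity guarantee of~\cite[Theorem~1.1]{PeterAvi1} to the flip-flop algorithm by exploiting the fact, established earlier in this section, that the two procedures agree up to the choice of normalizing constants in the update steps~\eqref{eq:updateMatrixNormal}. First I would recall precisely what~\cite[Theorem~1.1]{PeterAvi1} asserts when specialized from tensors to matrices: given an integer matrix tuple with entries of bit size at most $b$, after a number of arithmetic steps polynomial in $(nm_1m_2, b, \nicefrac{1}{\varepsilon})$, operator scaling either certifies null cone membership or outputs a pair of scalings bringing the tuple $\varepsilon$-close (in the appropriate distance measuring the deviation of the moment map from zero) to a point where the moment map vanishes. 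Here the relevant moment map is exactly~\eqref{eq:momentMapMatrixNormal}, whose vanishing is characterized in Theorem~\ref{thm:KNmatrixNormal} by the two trace conditions $\sum_i Y_iY_i\T = c_1 I_{m_1}$ and $\sum_i Y_i\T Y_i = c_2 I_{m_2}$.

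The key step is to match the iterates. I would argue that the flip-flop update~\eqref{eq:updateMatrixNormal} and the operator-scaling update from~\cite[Algorithm~1]{PeterAvi1} produce the \emph{same} sequence of scaled tuples $(\Psi_1^{\nicefrac12}, \Psi_2^{\nicefrac12}) \cdot Y$ up to determinant normalization: operator scaling constrains the scaling matrices to $\SL_{m_1} \times \SL_{m_2}$, whereas flip-flop chooses constants minimizing the outer infimum in~\eqref{eqn:doubleinf}, but since the capacity and the moment-map-vanishing condition depend only on the $\SL_{m_1} \times \SL_{m_2}$ orbit, the distance-to-vanishing-moment-map of the flip-flop iterate equals that of the corresponding operator-scaling iterate. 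Consequently the same polynomial number of steps suffices, and the $\varepsilon$-closeness guarantee transfers verbatim. For the dichotomy, I would invoke Proposition~\ref{prop:realVsComplexCapacity} to pass between the real and complex actions, so that operator scaling's detection of null cone membership over $\CC$ certifies instability over $\RR$, which by Theorem~\ref{thm:bigTheoremMatrixNormal}(a) is exactly the statement that $\ell_Y$ is unbounded from above. When an update step fails because a matrix in~\eqref{eq:updateMatrixNormal} is singular, this too signals null cone membership, matching the unboundedness branch.

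The main obstacle I anticipate is bookkeeping the precise notion of $\varepsilon$-closeness and confirming that the constant rescaling between the two algorithms does not distort it. In~\cite{PeterAvi1} the approximation is measured relative to $\SL$-normalized iterates, so I would need to verify that replacing the $\SL$-constants by the flip-flop constants (which scale $\Psi_1, \Psi_2$ by positive reals) leaves the residual $\Vert \sum_i Y_iY_i\T - c_1 I_{m_1}\Vert$-type quantity controlled by $\varepsilon$, possibly after absorbing a benign factor into the polynomial bound. A secondary subtlety is ensuring the bit-complexity accounting survives the translation: since flip-flop works directly with positive definite $\Psi_1, \Psi_2$ rather than determinant-one scalings, I would confirm that the intermediate quantities remain of polynomially bounded bit size, which follows because the two iterate sequences differ only by explicit determinant factors that are themselves polynomially bounded. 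Once these matchings are in place, the theorem follows by directly quoting the complexity bound of~\cite[Theorem~1.1]{PeterAvi1} and reinterpreting its output via Theorem~\ref{thm:KNmatrixNormal} and Theorem~\ref{thm:bigTheoremMatrixNormal}(a).
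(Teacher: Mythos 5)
Your proposal follows essentially the same route as the paper: the paper gives no separate formal proof but derives the theorem by directly adapting \cite[Theorem~1.1]{PeterAvi1}, having already established that the flip-flop updates coincide with operator scaling up to the choice of normalizing constants, and having handled the unboundedness branch via null cone membership, Proposition~\ref{prop:realVsComplexCapacity}, and Theorem~\ref{thm:bigTheoremMatrixNormal}(a). Your additional bookkeeping about the scale-invariance of the moment-map-vanishing condition and the bit-size of intermediate quantities is a reasonable (and correct) elaboration of the same argument.
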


In the case where the log-likelihood $\ell_Y$ is bounded, taking the limit $\varepsilon \to 0$ in Theorem~\ref{thm:flipflopcomplexity} gives rise to two possibilities. Either the MLE exists and is the limit of the $\Psi_1 \otimes \Psi_2$ as $\varepsilon \to 0$, or the sequence $\Psi_1 \otimes \Psi_2$ diverges as $\varepsilon \to 0$, by Remark~\ref{rem:noExtendedMLE}.
Because of this divergence, there is no meaningful notion of approximate MLE in the latter scenario.

\subsubsection{Outlook} We briefly comment on extensions of the above to general groups, see the right hand side of Figure~\ref{fig:algorithms}.
In its full generality, the algorithm in~\cite{PeterAvi1} is an alternating minimization procedure to find the capacity of a tuple of $d$-dimensional tensors of format $m_1 \times \ldots \times m_d$ under the  action of 
$\SL_{m_1} \times \ldots \times \SL_{m_d}$.
It can therefore be used for ML estimation in (real and complex) tensor normal models.
More generally, the algorithms in~\cite{PeterAvi3} can be used for geodesically convex algorithms for maximum likelihood estimation in complex Gaussian group models as in Theorem~\ref{thm:MLEversusStabilityComplex}.
Many scaling algorithms are designed to optimize over the complex orbit, but often each update is defined over $\RR$ if the input is real, and hence they can also be used for real Gaussian group models.

\section{Transitive DAGs}  \label{sec:tdag}
In this section we study graphical models that fit into the Gaussian group model framework. We study MLE existence via a corresponding null cone problem.
We focus on directed graphs, although
our results also cover undirected graphical models, as explained in Remark~\ref{rem:undirectedGraphs}.

Let $\mathcal{G}$ be a directed acyclic graph (DAG) with $m$ nodes.
We denote an edge from $j$ to $i$ by $j \to i$; otherwise, if there is no such edge, we write $j \not\to i$.
We note that edges $i \to i$ do not appear in a DAG, because they give cycles of length one.
Consider the statistical model represented by the linear structural equation
\[ Y = \Lambda Y + \varepsilon ,\]
where $Y \in \RR^m$, the matrix $\Lambda \in \RR^{m \times m}$ satisfies $\Lambda_{ij}=0$ for $j \not\to i$ in $\mathcal{G}$, and $\varepsilon \sim N(0,\Omega)$ with $\Omega \in \RR^{m \times m}$ diagonal and positive definite. 
The model expresses each coordinate $Y_i$ as a linear combination of all $Y_j$ such that $j \to i$, up to Gaussian error.
Solving for $Y$, we have
\[ Y = (I - \Lambda)^{-1} \varepsilon, \]
where the acyclicity of $\mathcal{G}$ implies that $(I - \Lambda)$ is invertible.
We see that $Y$ is Gaussian with covariance matrix and concentration matrix
\begin{equation}
    \label{eq:graphmodel}
 \Sigma = (I - \Lambda)^{-1} \Omega (I-\Lambda)^{-\mathsf{T}} , \qquad
\Psi = (I - \Lambda)\T \Omega^{-1} (I-\Lambda).
\end{equation}
The Gaussian graphical model $\mathcal{M}_\mathcal{G}^\to$ consists of the set of concentration matrices $\Psi$ of the form in~\eqref{eq:graphmodel}, for $\Lambda$ and $\Omega$ defined in terms of $\mathcal{G}$ as above.

We now put these models in the context of Gaussian group models. Given a DAG $\mathcal{G}$, we define the set of matrices
\begin{equation}
\label{eq:GG}
 G(\mathcal{G}) = \{ g \in \GL_m \mid g_{ij}=0 \text{ for } i \neq j \text{ with } j \not \to i \text{ in }  \mathcal{G} \} .
 \end{equation}
We have a \emph{transitive} DAG (TDAG) $\mathcal{G}$ if  $k \to j$ and $j \to i$ in $\mathcal{G}$ imply $k \to i$ in $\mathcal{G}$. 

\begin{proposition}\label{prop:tdagroup}
The set of matrices $G(\mathcal{G})$ is a group if and only if $\mathcal{G}$ is a TDAG. In this case, the Gaussian graphical model given by $\mathcal{G}$ is the Gaussian group model given by $G(\mathcal{G})$:
\[ \mathcal{M}_\mathcal{G}^\to = \mathcal{M}_{G(\mathcal{G})}.\]
\end{proposition}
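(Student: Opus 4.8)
The plan is to present $G(\mathcal{G})$ as the group of units of a matrix algebra. Write $A(\mathcal{G})$ for the linear space of all $m \times m$ matrices $g$ (not necessarily invertible) with $g_{ij} = 0$ whenever $i \neq j$ and $j \not\to i$, so that $G(\mathcal{G}) = A(\mathcal{G}) \cap \GL_m$. The space $A(\mathcal{G})$ contains $I_m$ and is automatically closed under addition and scalar multiplication, so the only group axioms that are not immediate are closure under matrix multiplication and under inverses. I will therefore first characterise when $A(\mathcal{G})$ is closed under multiplication, and show that this happens exactly when $\mathcal{G}$ is transitive.

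For the closure computation, I expand $(gh)_{ik} = \sum_j g_{ij} h_{jk}$ for $g, h \in A(\mathcal{G})$ and $i \neq k$. A summand can be nonzero only if $g_{ij} \neq 0$ and $h_{jk} \neq 0$, i.e. only if ($j = i$ or $j \to i$) and ($j = k$ or $k \to j$). Going through these cases, each already forces $k \to i$ except the case $k \to j \to i$ with $i,j,k$ distinct, which forces $k \to i$ precisely when $\mathcal{G}$ is transitive. Hence transitivity implies $gh \in A(\mathcal{G})$. For the converse I argue contrapositively: if $k \to j \to i$ but $k \not\to i$ (these three vertices are distinct because $\mathcal{G}$ is acyclic), then, writing $E_{ab}$ for the matrix unit with a single $1$ in position $(a,b)$, the elementary matrices $I + E_{ij}$ and $I + E_{jk}$ lie in $G(\mathcal{G})$, yet their product $I + E_{ij} + E_{jk} + E_{ik}$ has a nonzero $(i,k)$-entry and so leaves $G(\mathcal{G})$; thus if $G(\mathcal{G})$ is a group then $\mathcal{G}$ is a TDAG.

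It remains to upgrade multiplicative closure to the full group property. When $\mathcal{G}$ is a TDAG, $A(\mathcal{G})$ is a finite-dimensional unital associative algebra, so for $g \in G(\mathcal{G})$ the Cayley--Hamilton theorem expresses $g^{-1}$ as a polynomial in $g$; since $A(\mathcal{G})$ is closed under the algebra operations and contains $I_m$, this polynomial lies in $A(\mathcal{G})$, giving $g^{-1} \in A(\mathcal{G}) \cap \GL_m = G(\mathcal{G})$. Together with product closure this shows $G(\mathcal{G})$ is a group, completing the equivalence.

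For the model identity when $\mathcal{G}$ is a TDAG, I plan to use the normalisation $g = D(I - \Lambda)$. Given $\Psi = (I-\Lambda)\T \Omega^{-1}(I-\Lambda) \in \mathcal{M}_\mathcal{G}^\to$, I factor the diagonal positive definite matrix $\Omega^{-1}$ as $D\T D$ with $D = \Omega^{-1/2}$ diagonal, and set $g := D(I - \Lambda)$; here $I - \Lambda \in G(\mathcal{G})$ (it lies in $A(\mathcal{G})$ and is invertible by acyclicity) and $D \in G(\mathcal{G})$, so $g \in G(\mathcal{G})$ and $\Psi = g\T g$. Conversely, given $g \in G(\mathcal{G})$, a topological ordering of $\mathcal{G}$ makes $g$ triangular, so $\det g = \prod_i g_{ii} \neq 0$ forces every diagonal entry to be nonzero; setting $D := \mathrm{diag}(g_{11}, \ldots, g_{mm})$ and $\tilde g := D^{-1} g \in G(\mathcal{G})$, the matrix $\tilde g$ has unit diagonal, so $\Lambda := I - \tilde g$ has zero diagonal and the correct support, and with $\Omega := D^{-2}$ one recovers $g\T g = (I - \Lambda)\T \Omega^{-1}(I - \Lambda) \in \mathcal{M}_\mathcal{G}^\to$. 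I expect the main obstacle to be the careful support bookkeeping in the product computation that pins transitivity to multiplicative closure; after that, inverse-closure requires the separate Cayley--Hamilton observation, and the model identity reduces to the diagonal-extraction factorisation.
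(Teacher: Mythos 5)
Your proposal is correct and follows essentially the same route as the paper: elementary matrices $I+E_{ij}$, $I+E_{jk}$ for the non-transitive direction, the same case analysis of $(gh)_{ik}=\sum_j g_{ij}h_{jk}$ for multiplicative closure, and the factorisation $g=\Omega^{-1/2}(I-\Lambda)$ for the model identity. The only (valid) variation is that you obtain inverse-closure from Cayley--Hamilton applied to the unital algebra $A(\mathcal{G})$, whereas the paper writes $g=D(I-N)$ with $N$ nilpotent of support contained in that of $g$ and inverts via the finite geometric series $g^{-1}=(I+N+\cdots+N^{m-1})D^{-1}$; both are fine.
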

\begin{proof}
If $\mathcal{G}$ is not a TDAG, then there exist pairwise distinct indices $i,j,k$ such that $j\to i$ and $k\to j$ but $k \not \to i$. Take the elementary matrices $g = E_{ij}$ (with ones on the diagonal and at the $(i,j)$ entry, and zero elsewhere) and $h = E_{jk}$.
We see that $g, h \in G(\mathcal{G})$, but $gh \notin G(\mathcal{G})$ since $(gh)_{ik}=1$, hence $G(\mathcal{G})$ is not a group. 

Conversely, we assume that $\mathcal{G}$ is a TDAG.
Any invertible diagonal matrix, in particular the identity $I$, is in $ G(\mathcal{G})$. 
Suppose $g,h \in G(\mathcal{G})$ and that $(gh)_{ik} \neq 0$ for $i \neq k$. This means that there must exist some index $j$ such that $g_{ij} h_{jk} \neq 0$. 
In particular, $g_{ij} \neq 0$ and $h_{jk} \neq 0$, so that we have either $j \to i$ or $j = i$, and either $k \to j$ or $k = j$. 
In all of these cases, we have $k \to i$, since $\mathcal{G}$ is a TDAG. 
Therefore $gh \in G(\mathcal{G})$, as required for $G(\mathcal{G})$ to be a group. Now if $g \in G(\mathcal{G})$ we show that $g^{-1} \in G(\mathcal{G})$. 
We can write $g = D( I - N)$, where $D$ is diagonal with same diagonal entries as $g$ and $N$ is nilpotent with same zero pattern (outside of the diagonal) as $g$. In fact, since the TDAG $\mathcal{G}$ does not contain any path of length $m$, we have $N^m=0$.
Then
\[ g^{-1} = (I + N + N^2 + \dots + N^{m-1})D^{-1} \in G(\mathcal{G}),
\]
since $\supp(N^j) \subseteq \supp(N)$ for $j \geq 1$, as $\mathcal{G}$ is a TDAG.
We have shown that $G(\mathcal{G})$ is a group.
The equality of models follows from  reparametrizing $(I - \Lambda)\T \Omega^{-1} (I-\Lambda)$ by $g\T g$, where $g = \Omega^{-\frac12} (I- \Lambda) \in G(\mathcal{G})$.
\end{proof}

\begin{example}\label{ex:path1}
Let $\mathcal{G}$ be the TDAG $1 \leftarrow 3 \rightarrow 2$. The corresponding group $G(\mathcal{G}) \subseteq \GL_3$ consists of invertible matrices $g$ of the form
\[g = \begin{bmatrix} * & 0 & * \\ 0 & * & * \\ 0 & 0 & * \end{bmatrix}.
\]
 By Proposition~\ref{prop:tdagroup}, we have that the Gaussian graphical model $\mathcal{M}^{\to}_\mathcal{G}$ is a $5$-dimensional linear slice of the cone of symmetric positive definite $3 \times 3$ matrices:
\begin{equation*}
    \hspace*{27mm}
\mathcal{M}^{\to}_\mathcal{G} = \{ g\T g \mid g \in G(\mathcal{G) } \} = \{ \Psi \in \PD_3 \mid \psi_{12}=\psi_{21}=0 \}.
\hspace*{27mm}
\diamondsuit
\end{equation*}
\end{example}

The group $G(\mathcal{G})$ associated to a TDAG $\mathcal{G}$ is Zariski closed and closed under non-zero scalar multiples, but not self-adjoint. 
Hence we are not in
the setting of Theorem~\ref{thm:MLEversusStabilityReal}.
However, we can apply Theorem~\ref{thm:gaussianMLEstable} to derive our main result of this section. 
Since the group $G(\mathcal{G})$ contains orthogonal matrices of determinant $-1$ (e.g. the diagonal matrix whose first entry is $-1$ and all other entries are $1$), Theorem~\ref{thm:gaussianMLEstable} holds for $G(\mathcal{G})_{\mathrm{SL}}^+$ by Remark~\ref{rem:orthogonalMatricesNegativeDet}.

We characterize boundedness of the likelihood and MLE existence, in terms of the stability of a tuple of samples. When the MLE exists generically (i.e., when the number of samples is at least the maximum likelihood threshold), it is known to be generically unique \cite[Section~5.4.1]{Lauritzen}.
We show that the log-likelihood given $Y$ is bounded from above if and only if the MLE given $Y$ exists,
by ruling out the possibility that a tuple can be semistable but not polystable.
We provide an exact condition for the MLE given $Y$ to exist, based on linear dependence of the rows of $Y$.
A \emph{parent} of a node $i$ is a node $j$ with edge $j \to i$ in $\mathcal{G}$. 

\begin{theorem}
\label{thm:tdagnullcone}
Consider a TDAG $\mathcal{G}$ and a tuple of $n$ samples $Y \in \RR^{m 
\times n}$.
If some row of $Y$, corresponding to node $i$, is a linear combination of the rows corresponding to the parents of $i$, then $Y$ is unstable under the action by $G(\mathcal{G})_{\mathrm{SL}}^{+}$, and the likelihood is unbounded from above. Otherwise, $Y$ is polystable and the MLE exists.
\end{theorem}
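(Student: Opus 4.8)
The plan is to split into the two cases of the statement and to apply Theorem~\ref{thm:gaussianMLEstable}, which holds for $G(\mathcal{G})_{\mathrm{SL}}^{+}$ by Remark~\ref{rem:orthogonalMatricesNegativeDet} since $G(\mathcal{G})$ contains orthogonal matrices of determinant $-1$. Throughout I would fix a topological ordering of the nodes so that $j \to i$ forces $j < i$; then $G(\mathcal{G})$ consists of lower-triangular matrices whose below-diagonal entry $(i,j)$ can be nonzero only when $j$ is a parent of $i$, and diagonal and unipotent elements both lie in $G(\mathcal{G})$. I would also record the fact, used repeatedly, that in a TDAG the parents of a node coincide with its ancestors, so $\mathrm{span}\{Y_j : j \to i\}$ is unchanged by adjoining ancestors of parents. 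I view $Y \in \RR^{m\times n}$ as the sample matrix with one row per node and the group acting by left multiplication.

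For the first (unstable) case, suppose $Y_i = \sum_{j \to i} c_j Y_j$. I would set $E = I - \sum_{j\to i} c_j\, e_i e_j\T$, which lies in $G(\mathcal{G})$ because its only off-diagonal entries sit in positions $(i,j)$ with $j \to i$, and which is unipotent, hence of determinant one; by construction the $i$-th row of $EY$ vanishes. I then combine $E$ with the diagonal one-parameter subgroup $D(t) = \mathrm{diag}(t^{c_1},\dots,t^{c_m})$ where $c_i = -(m-1)$ and $c_k = 1$ for $k \neq i$, so that $\det D(t) = 1$ and $D(t) \in G(\mathcal{G})_{\mathrm{SL}}^{+}$ for $t>0$. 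Since the $i$-th row of $EY$ is zero, multiplying by $D(t)$ scales every surviving row by $t$, so $\|D(t)E\, Y\|^2 \to 0$ as $t \to 0^+$. Thus $\capac(Y)=0$, the tuple $Y$ is unstable, and the likelihood is unbounded from above by Theorem~\ref{thm:gaussianMLEstable}(a).

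For the second case, assume no row $Y_i$ lies in the span of its parent rows; I would prove $Y$ is polystable and then invoke Theorem~\ref{thm:gaussianMLEstable}(c). First I produce a convenient representative in the orbit: processing the nodes in topological order, I repeatedly subtract from the current $i$-th row its orthogonal projection onto the span of the already-finalized parent rows. Each such elementary operation adds a multiple of a parent row and hence lies in $G(\mathcal{G})$, so their composite is a unipotent $u \in G(\mathcal{G})$ with $\det u = 1$. Writing $\tilde Y = uY$, later steps never touch a finalized row nor the parent rows of an earlier node, so in $\tilde Y$ each row is orthogonal to all of its parent rows. An induction along the topological order shows $\tilde Y_i = Y_i + w_i$ with $w_i \in \mathrm{span}\{Y_j : j \to i\}$ (using parents $=$ ancestors), so the hypothesis $Y_i \notin \mathrm{span}\{Y_j : j\to i\}$ guarantees $\tilde Y_i \neq 0$ for every $i$.

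It then remains to show that the orbit $G(\mathcal{G})_{\mathrm{SL}}^{+} \cdot \tilde Y$ is closed; this is the main obstacle, since $G(\mathcal{G})$ is solvable rather than reductive and Kempf-Ness is unavailable, so I would argue directly. For $g \in G(\mathcal{G})_{\mathrm{SL}}^{+}$ the $i$-th row of $g\tilde Y$ equals $g_{ii}\tilde Y_i + \sum_{j\to i} g_{ij}\tilde Y_j$, and the parent-orthogonality of $\tilde Y$ makes this an orthogonal decomposition, giving the Pythagorean bound $\|g\tilde Y\|^2 \geq \sum_i g_{ii}^2 \,\|\tilde Y_i\|^2$. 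If $g_k \tilde Y$ stays bounded, then since the $\|\tilde Y_i\|$ are positive each $|g_{k,ii}|$ is bounded above, and since $\det g_k = \prod_i g_{k,ii} = 1$ the diagonal entries are also bounded away from zero; passing to a subsequence, $g_{k,ii} \to d_i \neq 0$ with $\prod_i d_i = 1$, while each bounded sum $\sum_{j\to i} g_{k,ij}\tilde Y_j$ converges to some $v_i \in \mathrm{span}\{\tilde Y_j : j \to i\}$. Writing $v_i = \sum_{j\to i} c_{ij}\tilde Y_j$ and assembling $g \in G(\mathcal{G})_{\mathrm{SL}}^{+}$ from the $d_i$ and the $c_{ij}$ exhibits the limit as $g\tilde Y$, so the orbit is closed. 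As $\tilde Y \neq 0$, the tuple is polystable, and Theorem~\ref{thm:gaussianMLEstable}(c) yields the MLE.
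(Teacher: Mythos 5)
Your proof is correct, and while the unstable half is essentially the paper's construction (your $D(t)E$ is exactly the paper's $g_\varepsilon$, just factored into a unipotent row-elimination times a determinant-one diagonal rescaling), your polystability argument takes a genuinely different route. The paper proceeds by induction on the number of nodes $m$: it takes a limit point $Y_0$ of the orbit, deletes the last node to invoke the inductive hypothesis, and then recovers convergence of the individual entries $\gamma_{i\varepsilon}$, $\alpha_\varepsilon$ of the last row of $g_\varepsilon$ from linear independence of $r_{m-t},\dots,r_m$, with some delicate bookkeeping about subsequences, signs of $\alpha_\varepsilon$, and a contradiction in the case $\alpha_0=0$. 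You instead first pass to a parent-orthogonalized representative $\tilde Y=uY$ in the same orbit (valid because $G(\mathcal{G})$ is a group and, by transitivity, the elementary row operations stay inside $\mathrm{span}\{Y_j: j\to i\}$, so the hypothesis guarantees $\tilde Y_i\neq 0$), and then the Pythagorean inequality $\|g\tilde Y\|^2\ge \sum_i g_{ii}^2\|\tilde Y_i\|^2$ together with $\prod_i g_{ii}=1$ pins down the diagonal entries of any norm-bounded sequence, after which the off-diagonal contributions converge inside the closed parent spans and reassemble into a group element realizing the limit. This buys a non-inductive, more transparent closedness proof, and as a bonus the same inequality combined with AM--GM gives an explicit positive lower bound on the capacity; the cost is the extra normalization step, whereas the paper's induction works directly with $Y$. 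Both correctly avoid Kempf--Ness, which is unavailable here since $G(\mathcal{G})$ is not self-adjoint. One point worth making explicit in your write-up: the coefficients $c_{ij}$ of $v_i$ in $\mathrm{span}\{\tilde Y_j: j\to i\}$ need not be unique when the parent rows are dependent, but any choice yields a valid $g\in G(\mathcal{G})_{\mathrm{SL}}^{+}$ with $g\tilde Y$ equal to the limit, which is all that closedness requires.
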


\begin{remark}
If $Y$ has a row of zeros, it is unstable and the likelihood is unbounded from above.
This satisfies the criterion in the above theorem, because a row of zeros at row $i$ is interpreted as a trivial linear combination, independently of whether node $i$ has parents in $\mathcal{G}$.
\end{remark}

\begin{proof}[Proof of Theorem~\ref{thm:tdagnullcone}]
Without loss of generality, we label the nodes of $\mathcal{G}$ such that $j \to i$ implies $j < i$. Suppose the $i$th node of $\mathcal{G}$ has the first $s$ nodes as parents, and that the $i$th row of $Y$ is a linear combination of the first $s$ rows, 
\[ r_i = \lambda_1 r_1 + \dots + \lambda_s r_s. \]
We show that $Y$ is unstable under $G(\mathcal{G})_{\mathrm{SL}}^{+}$. Let $\varepsilon>0$ and consider the matrix $g_\varepsilon$, which is equal to $\varepsilon I$ except for the $i$th row, which equals
 \begin{equation*}
       (g_\varepsilon)_{ik}= \begin{cases} -\varepsilon^{-(m-1)}\lambda_k & k = 1,\dots,s \\
        \varepsilon^{-(m-1)} & k = i \\ 
        0 & \text{otherwise}.\end{cases} 
       \end{equation*}
We have that $g_\varepsilon \in G(\mathcal{G})_{\mathrm{SL}}^{+}$, since $\det(g_\varepsilon) = 1$ and there are non-zero off-diagonal entries only when $j \to i$. Moreover, the $i$th row of $g_\varepsilon Y$ is the zero vector. Letting $\varepsilon \to 0$ we have that $g_\varepsilon Y \to 0$, so   we conclude that $Y$ is unstable.    
The log-likelihood is unbounded from above, by Theorem~\ref{thm:gaussianMLEstable}.

For the second claim, let $Y$ be such that no row is a linear combination of the rows corresponding to its parents. We show by induction on $m$ that $Y$ is polystable.
This implies that the MLE given $Y$ exists, by Theorem~\ref{thm:gaussianMLEstable}.
If $m=1$, then $G(\mathcal{G})_{\mathrm{SL}}^{+}=\{  1 \}$ and $Y$ is a single non-zero row, 
so the statement holds.
Now for the induction step, $m > 1$,
we assume the claim holds for TDAGs with $m-1$ nodes.

We prove that the orbit $G(\mathcal{G})_{\mathrm{SL}}^{+} \cdot Y$ is closed and hence $Y$ is polystable.
For this, let $Y_0$ be an element of the orbit closure of $Y$.
Then there exists $g_\varepsilon \in G(\mathcal{G})_{\mathrm{SL}}^{+}$ with
$g_\varepsilon Y \to Y_0$ as $\varepsilon \to 0$.
We may assume without loss of generality that $\alpha_\varepsilon := (g_\varepsilon)_{mm} > 0$,
by using an appropriate subsequence of the sequence $(g_\varepsilon)$ and 
multiplying the last row and another row of both $g_\varepsilon$ and $Y_0$ by $-1$ if needed.
Let $g'_\varepsilon$ be obtained from $g_\varepsilon$ by dropping the last row and column and multiplying by $\alpha_\varepsilon^{\nicefrac{1}{m-1}}$. 
Then $g'_\varepsilon \in G(\mathcal{G}')_{\mathrm{SL}}^{+}$, where the TDAG $\mathcal{G}'$ is obtained from $\mathcal{G}$ by removing the last node (and all edges pointing to it).
Similarly, let $Y'$ and $Y_0'$ be obtained from $Y$ and $Y_0$, respectively, by dropping the last row. 
Since $g_\varepsilon Y \to Y_0$, we have that 
\begin{equation}
\label{eq:alphaConvergence}
    \alpha_\varepsilon^{\nicefrac{-1}{m-1}} g'_\varepsilon Y' \to Y'_0 \, \text{ as } \, \varepsilon \to 0.
\end{equation}
Since no row of $Y$ is a linear combination of the rows corresponding to its parents, the same is true of $Y'$, and we apply the induction hypothesis to see that $Y'$ is polystable.
We will use this to construct a group element that sends $Y$ to $Y_0$.

Without loss of generality, assume $m-s,\dots,m-1$ are the parents of the last node~$m$. Then the last row of $g_\varepsilon$ is
$[ 0, \dots, 0, \beta_{s\varepsilon}, \dots, \beta_{1\varepsilon}, \alpha_\varepsilon ]$
and therefore the last row of $g_\varepsilon Y$ is
\begin{equation*}
    \beta_{s\varepsilon} r_{m-s} + \dots + \beta_{1\varepsilon} r_{m-1} + \alpha_\varepsilon r_{m}.
\end{equation*}
Now, let $t \leq s$ be the dimension of the vector space spanned by $r_{m-s}, \ldots, r_{m-1}$ and assume, without loss of generality, that the rows $r_{m-t},\dots,r_{m-1}$ are linearly independent.
Then we can rewrite the last row of $g_\varepsilon Y$ as
\begin{equation}\label{eq:convGamma}
        \gamma_{t\varepsilon} r_{m-t}  + \dots + \gamma_{1\varepsilon} r_{m-1}  +  \alpha_\varepsilon r_{m}
    \end{equation}
for some $\gamma_{i\varepsilon} \in \RR$.
Since $r_m$ is not a linear combination of its parents, the rows $r_{m-t}, \ldots, r_m$ are linearly independent, i.e. the matrix $M \in \RR^{(t+1) \times n}$ formed by these rows has rank $t+1$.
Thus, any standard basis vector in $\RR^{t+1}$ can be expressed as a linear combination of the columns of $M$. 
Applying these linear combinations to \eqref{eq:convGamma}, which is the last row of $g_\varepsilon Y$ and converges to the last row of $Y_0$, we conclude convergence of each $\gamma_{i\varepsilon}$ ($1\leq i \leq t$) and of $\alpha_{\varepsilon}$ as $\varepsilon \to 0$. We denote the corresponding limits by $\gamma_{i0} \in \RR$ and $\alpha_0 \geq 0$ respectively.

If $\alpha_0 = 0$,  we get from~\eqref{eq:alphaConvergence} that $g'_\varepsilon Y'= \alpha_\varepsilon^{\nicefrac{1}{m-1}} (\alpha_\varepsilon^{\nicefrac{-1}{m-1}} g'_\varepsilon Y') \to 0$ as $\varepsilon \to 0$. So $Y'$ is unstable, in particular not polystable, which contradicts the induction hypothesis.

Therefore, $\alpha_0 > 0$ and we have $g'_\varepsilon Y'= \alpha_\varepsilon^{\nicefrac{1}{m-1}} (\alpha_\varepsilon^{\nicefrac{-1}{m-1}} g'_\varepsilon Y') \to \alpha_0^{\nicefrac{1}{m-1}} Y'_0$ as $\varepsilon \to 0$.
Applying the induction hypothesis to $Y'$, we obtain that $\alpha_0^{\nicefrac{1}{m-1}} Y'_0$ lies in the orbit of $Y'$ under the action by $G(\mathcal{G}')_{\mathrm{SL}}^{+}$.
This means there exists $h' \in G(\mathcal{G}')_{\mathrm{SL}}^{+}$ such that $\alpha_0^{\nicefrac{1}{m-1}} Y'_0 = h'Y'$ 
and therefore
\[ h := \begin{bmatrix} \alpha_0^{\nicefrac{-1}{m-1}} h' & 0 \\ 0 \cdots 0 \, \gamma_{t0} \cdots \gamma_{10} & \alpha_0 \end{bmatrix}  \in G(\mathcal{G})_{\mathrm{SL}}^{+}
\]
satisfies $hY = Y_0$ as desired.
\end{proof}

Our approach characterizes MLE existence for any tuple $Y$, not just generic existence. 
We derive an immediate corollary for generic tuples, 
regarding the maximum likelihood thresholds $\mlt$ and $\mltb$ defined in Section~\ref{sec:MLEprelim}.
This is known for general DAGs in the graphical models literature, see \cite[Section 5.4.1]{Lauritzen} and \cite[Theorem~1]{drton2019maximum}. The \emph{in-degree} of a DAG $\mathcal{G}$ is the maximum number of parents of any node in $\mathcal{G}$. 

\begin{corollary} \label{cor:tdagmlt}
For the model $\mathcal{M}_\mathcal{G}^{\to}$ of a TDAG $\mathcal{G}$, we have
\[ \mltb(\mathcal{G}) = \mlt(\mathcal{G}) = \text{in-degree}(\mathcal{G}) + 1. \]
\end{corollary}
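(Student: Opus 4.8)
My plan is to reduce the whole statement to the pointwise dichotomy provided by Theorem~\ref{thm:tdagnullcone}. That theorem shows that, for every tuple $Y$, exactly one of two things happens: either some row $r_i$ of $Y$ lies in the span of the rows indexed by the parents of node $i$, in which case $Y$ is unstable and $\ell_Y$ is unbounded from above; or no such dependency occurs, in which case $Y$ is polystable and the MLE exists. Since these two cases are complementary, for every $Y$ the log-likelihood is bounded from above if and only if the MLE exists. The set of tuples with bounded likelihood therefore coincides with the set of tuples admitting an MLE, so the two generic thresholds agree: $\mltb(\mathcal{G}) = \mlt(\mathcal{G})$. It then remains only to compute $\mlt(\mathcal{G})$.

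To do this, I would rephrase MLE existence as a linear-algebra condition on the rows of $Y \in \RR^{m \times n}$. Writing $r_i \in \RR^n$ for the row of node $i$ and $d_i$ for its number of parents, Theorem~\ref{thm:tdagnullcone} says the MLE exists for $Y$ precisely when, for each node $i$, the $d_i + 1$ rows $\{r_i\} \cup \{ r_j : j \to i \}$ are linearly independent. Because all $mn$ entries of $Y$ are free, the rows $r_1, \dots, r_m$ may be treated as independent generic vectors in $\RR^n$.

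I would then analyze each node separately. Linear independence of the $d_i + 1$ relevant rows is equivalent to the non-vanishing of some $(d_i+1) \times (d_i+1)$ minor of the submatrix of $Y$ on those rows, which can be nonzero exactly when $d_i + 1 \le n$. Hence, if $n \ge d_i + 1$, the locus $\{ r_i \in \mathrm{span}(\text{parent rows}) \}$ is a proper algebraic subset, of measure zero; whereas if $n \le d_i$, the $d_i \ge n$ parent rows generically span all of $\RR^n$, forcing $r_i$ into their span, so the dependency holds generically. Taking the union over all $m$ nodes, the MLE exists for generic $Y$ if and only if the dependency locus is proper for every node, that is $n \ge d_i + 1$ for all $i$, equivalently $n \ge \text{in-degree}(\mathcal{G}) + 1$. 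This gives $\mlt(\mathcal{G}) = \text{in-degree}(\mathcal{G}) + 1$, and together with the first paragraph completes the proof.

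The only delicate point, which I would treat carefully, is the boundary regime $n \le d_i$: here I must argue that $r_i$ lies in the span of its parent rows not merely for some $Y$ but for \emph{generic} $Y$, so that non-existence of the MLE is truly generic rather than merely possible. This holds because the obstruction, namely the $d_i \ge n$ parent rows failing to span $\RR^n$, is cut out by the vanishing of all $n \times n$ minors of the parent submatrix and is hence confined to a proper algebraic subset.
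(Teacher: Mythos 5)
Your proposal is correct and takes essentially the same route as the paper: both deduce $\mltb(\mathcal{G})=\mlt(\mathcal{G})$ from the dichotomy in Theorem~\ref{thm:tdagnullcone} and then observe that, for generic $Y$, the row of a node fails to lie in the span of its parent rows exactly when $n \geq d_i+1$ for every node $i$. One cosmetic caveat: your rephrasing ``the MLE exists precisely when the $d_i+1$ rows are linearly independent'' is slightly stronger than the theorem's pointwise condition (which only forbids $r_i$ from lying in the span of its parents), but since your subsequent genericity analysis works with the correct locus $\{r_i \in \mathrm{span}(\text{parent rows})\}$, the argument is unaffected.
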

\begin{proof}
The equivalence of the two maximum likelihood thresholds follows from Theorem~\ref{thm:tdagnullcone}, where we also see that
for the MLE to exist generically we need that every row in a generic matrix of samples $Y \in \mathbb{R}^{m \times n}$ is not  a linear combination of its parent rows. Generic linear independence is guaranteed if and only if the number of columns $n$ is at least the number of rows involved in a node plus its parents.
\end{proof}

\begin{example}\label{ex:path2}
Let $\mathcal{G}$ be the TDAG $1 \leftarrow 3 \rightarrow 2$ from Example \ref{ex:path1}. We apply Theorem~\ref{thm:tdagnullcone} to show when the MLE given a sample matrix $Y \in \RR^{3 \times n}$ exists.
Node 3 has no parents, while nodes 1 and 2 both have the node 3 as their parent. 
Hence the log-likelihood $\ell_Y$ is unbounded from above if the first or second row is a scalar multiple of the third row, or if the third row is zero, and otherwise the MLE given $Y$ exists. 

When $n=1$, the first and second rows are always scalar multiples of the third row, hence the null cone fills the space, and the log-likelihood is always unbounded from above.
With $n=2$ samples, the null cone has two components, with vanishing ideal
\[ \langle y_{11} y_{32} - y_{12} y_{31}  \rangle \cap \langle y_{21} y_{32} - y_{22} y_{31}  \rangle. \]
For generic $Y \in \RR^{3 \times 2}$, these equations do not vanish and the  MLE given $Y$ exists. As in Corollary~\ref{cor:tdagmlt}, the maximum likelihood threshold is $\mlt(\mathcal{G}) = \mltb(\mathcal{G}) = 2$.
\hfill\exSymbol
\end{example}

In the previous example the null cone is Zariski closed, but this is not always the case. We now give a precise criterion for when this happens.
An \emph{unshielded collider} of a directed graph $\mathcal{G}$ is a subgraph $j \to i \leftarrow k$ with no edge between $j$ and $k$. 

\begin{corollary}\label{cor:tdagNC}
Let $\mathcal{G}$ be a TDAG, and consider the action of $G(\mathcal{G})_{\mathrm{SL}}^{+}$ on tuples of $n$ samples.
The irreducible components of the Zariski closure of the null cone are determinantal varieties:
each component is
defined by the maximal minors of the submatrix whose rows are a childless node and its parents.
For $n \geq \mlt(\mathcal{G})$,
the null cone is Zariski closed if and only if $\mathcal{G}$ has no unshielded colliders.
\end{corollary}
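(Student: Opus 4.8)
The plan is to combine the set-theoretic description of the null cone from Theorem~\ref{thm:tdagnullcone} with a passage to Zariski closure, and then analyze separately which determinantal varieties are components and when the closure adds nothing. Throughout I write $\mathrm{pa}(i)$ for the set of parents of node $i$, $p_i=|\mathrm{pa}(i)|$, and $r_i$ for the $i$th row of $Y$; I assume $n\ge\mlt(\mathcal{G})$, since for smaller $n$ the null cone is all of $\RR^{m\times n}$ by Corollary~\ref{cor:tdagmlt} and every statement is immediate. For each node set $B_i=\{Y: r_i\in\mathrm{span}(r_j: j\in\mathrm{pa}(i))\}$ and $D_i=\{Y:\mathrm{rank}(M_i)\le p_i\}$, where $M_i$ is the $(p_i+1)\times n$ submatrix with rows indexed by $\{i\}\cup\mathrm{pa}(i)$. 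By Theorem~\ref{thm:tdagnullcone} the null cone is $\mathcal{N}=\bigcup_i B_i$; since $B_i$ is dense in the irreducible generic determinantal variety $D_i$ one has $\overline{B_i}=D_i$, and hence $\overline{\mathcal{N}}=\bigcup_i D_i$. As $n\ge\mlt(\mathcal{G})$ gives $p_i\le\text{in-degree}(\mathcal{G})<n$ by Corollary~\ref{cor:tdagmlt}, each $D_i$ is a proper variety cut out by the maximal $(p_i+1)$-minors of $M_i$.

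First I would cut the index set down to childless nodes. The key observation is that if $l$ is a child of $i$, then transitivity gives $\{i\}\cup\mathrm{pa}(i)\subseteq\mathrm{pa}(l)\subseteq\{l\}\cup\mathrm{pa}(l)$, so the rows of $M_i$ form a subcollection of the rows of $M_l$; linear dependence of the former forces linear dependence of the latter, whence $D_i\subseteq D_l$. Iterating along a directed path from $i$ to a sink yields $D_i\subseteq D_c$ for some childless $c$, so $\overline{\mathcal{N}}=\bigcup_{c\text{ childless}}D_c$. To see these are exactly the irreducible components I would check they are pairwise non-nested: for distinct childless $c\ne c'$, the node $c$ is not a parent of $c'$, so $r_c$ is not a row of $M_{c'}$; taking $r_c=0$ (so $Y\in B_c\subseteq D_c$) and the rows of $M_{c'}$ generic and independent, which is possible since $n\ge p_{c'}+1$, produces a point of $D_c$ outside $D_{c'}$. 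Irreducibility together with non-nesting identifies the $D_c$ as the components.

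For the closedness criterion I would dispatch the easy direction first. If $\mathcal{G}$ has no unshielded collider, then at every childless $c$ any two parents are adjacent, so $\mathrm{pa}(c)$ is a transitive tournament, i.e.\ a chain $j_1\to\cdots\to j_{p_c}$ with $\mathrm{pa}(j_t)=\{j_1,\dots,j_{t-1}\}$. Given $Y\in D_c$, either $r_c\in\mathrm{span}(r_j:j\in\mathrm{pa}(c))$, so $Y\in B_c$, or the parent rows are dependent; choosing the largest index $t^*$ appearing in a dependency gives $r_{j_{t^*}}\in\mathrm{span}(r_{j_1},\dots,r_{j_{t^*-1}})=\mathrm{span}(r_j:j\in\mathrm{pa}(j_{t^*}))$, so $Y\in B_{j_{t^*}}$. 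Either way $Y\in\mathcal{N}$, hence $\overline{\mathcal{N}}=\bigcup_c D_c\subseteq\mathcal{N}$ and the null cone is closed.

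The harder direction, and the main obstacle, is to show that an unshielded collider forces non-closedness. I would first note that a collider $j\to i\leftarrow k$ at any node propagates: for a childless descendant $c$ of $i$ one has $j\to c$ and $k\to c$ by transitivity while $j,k$ stay non-adjacent, so $j\to c\leftarrow k$ is an unshielded collider at the sink $c$. It then suffices to exhibit a point of $D_c\setminus\mathcal{N}$. I would set $r_j=r_k=v$ with $v$ generic and all remaining rows generic; the coincidence $r_j=r_k$ makes the rows of $M_c$ dependent, so the point lies in $D_c$. The delicate part is verifying $r_i\notin\mathrm{span}(r_l:l\in\mathrm{pa}(i))$ for every $i$, which I would carry out by cases according to whether $i\in\{j,k\}$ and whether both, one, or neither of $j,k$ lie in $\mathrm{pa}(i)$, using that $j,k$ are non-adjacent (so neither is a parent of the other) and that $p_i<n$ makes each $\mathrm{span}(r_l:l\in\mathrm{pa}(i))$ a proper subspace avoided by the generic row $r_i$; the single forced relation $r_j=r_k$ only lowers the dimensions of the relevant parent spans and never puts a row into the span of its own parents. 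Each non-containment is an open dense condition on the finitely many free rows, so a simultaneous generic choice exists, yielding $Y\in\overline{\mathcal{N}}\setminus\mathcal{N}$ and proving that the null cone is not closed.
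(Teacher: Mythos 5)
Your proof is correct and follows the same skeleton as the paper's: describe the null cone via Theorem~\ref{thm:tdagnullcone} as $\bigcup_i B_i$, pass to Zariski closure to obtain the determinantal varieties $D_i$, absorb $D_i$ into $D_c$ for a childless descendant $c$ by transitivity, prove closedness in the collider-free case by the largest-nonzero-coefficient argument, and exhibit a witness in $\overline{\mathcal{N}}\setminus\mathcal{N}$ when a collider is present. Two local differences are worth noting. First, you explicitly check that the $D_c$ for distinct childless $c$ are pairwise non-nested (via $r_c=0$ and generic remaining rows); the paper leaves this step implicit, so your identification of the irreducible components is slightly more complete. Second, and more substantively, in the collider direction the paper selects the non-adjacent parent pair $(j,k)$ with $k$ minimal, which forces $\mathrm{pa}(k)\subseteq\mathrm{pa}(j)$ and makes the sequential assignment $r_k:=r_j$ verifiably safe; you instead set $r_j=r_k=v$ with $v$ and all remaining rows jointly generic and argue that every non-containment $r_{i'}\notin\mathrm{span}(r_p:p\in\mathrm{pa}(i'))$ holds simultaneously. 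This is sound: non-adjacency gives $j\notin\mathrm{pa}(k)$ and $k\notin\mathrm{pa}(j)$, so for each node the parent span is generated by at most $n-1$ rows, none of which is tied to the row being tested, and hence is a proper subspace. The only point to phrase carefully is that each "non-containment" locus is merely constructible, not open; one should say it contains a nonempty Zariski-open set (e.g.\ where the relevant bordered minor witnesses a rank jump), after which a finite intersection of nonempty Zariski-open sets in the irreducible parameter space is nonempty. Your genericity argument thus replaces the paper's combinatorial minimality trick; both are valid, and yours avoids the need for that auxiliary observation at the cost of a slightly more delicate simultaneity claim.
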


\begin{proof}
By Theorem~\ref{thm:tdagnullcone}, the null cone is the union 
\begin{equation}
\label{eq:tdagNullcone}
    \bigcup_{i = 1}^m \mathcal{L}(i),
\end{equation}
where $\mathcal{L}(i)$ consists of all $m \times n$ matrices whose $i$th row is a linear combination of rows corresponding to the parents of node $i$.
Since the closure of a finite union is the union of the closures, the Zariski closure of~\eqref{eq:tdagNullcone} is a union of determinantal varieties~$\overline{\mathcal{L}(i)}^{Z}$, 
each given by the maximal minors of the submatrix formed by node $i$ and its parents.
If node $i$ has a child $c$, then $\overline{\mathcal{L}(i)}^{Z} \subset \overline{\mathcal{L}(c)}^{Z}$, because of the transitivity of $\mathcal{G}$. The first part of the assertion follows.

For the second part, 
we assume without loss of generality that the labels are ordered such that $j \to i$ implies $j<i$.
We start by assuming that $\mathcal{G}$ has no unshielded colliders.
Let $Y$ be a matrix in the Zariski closure of the null cone, i.e.
there is some node $i$ with parents $p_1 < \ldots < p_s$ such that the corresponding $s+1$ rows $r_i, r_{p_1}, \ldots, r_{p_s}$ of $Y$ are linearly dependent.
So there is a nontrivial linear combination $\lambda_1 r_{p_1} + \ldots + \lambda_s r_{p_s} + \lambda_{s+1} r_i = 0$.
We pick the largest index $\ell$ such that $\lambda_\ell \neq 0$.
If $\ell = s+1$, the $i$th row is a linear combination of its parents, and $Y$ is in the null cone.
Otherwise, the row $r_{p_\ell}$ is a linear combination of $r_{p_1}, \ldots,r_{p_{\ell-1}}$. We claim that these are all parents of $p_l$, and therefore that $Y$ is in the null cone. Indeed, if some $p_j$ for $1 \leq j \leq \ell - 1$ was not a parent of $p_\ell$, we would have the unshielded collider $p_j \to i \leftarrow p_\ell$.

Conversely, we assume that some node $i$ has two parents $j<k$ that are not connected.
If $i$ has several such pairs of parents, we consider a pair $(j,k)$ such that $k$ is minimal.
This assures that every parent $p$ of $k$ must also be a parent of $j$. Indeed, by transitivity of the DAG $\mathcal{G}$, we have that $p \to i$ and that $j \not\to p$ (since $j \not\to k$). Moreover, by minimality of $k$, it cannot be that there is no edge between $p$ and $j$, so $p \to j$.

We will now construct a matrix $Y$ which is not in the null cone but in its Zariski closure.
We assign the rows $1, \ldots, m$ in order, according to the following rules.
Each row, except for $k$, is assigned so that it is linearly independent of its parents.
We note that this is possible due to $n \geq \mlt(\mathcal{G})$.
In particular, the $j$th row is assigned such that it is linearly independent of its parents, which include the parents of $k$ as observed above.
We pick the $k$th row equal to the $j$th row.
Since now the parents $j$ and $k$ of $i$ are linearly dependent, we see that the matrix $Y$ is in the Zariski closure of the null cone.
However, by our construction, no node in $\mathcal{G}$ is a linear combination of its parents, so $Y$ does not lie in the null cone.
\end{proof}

\begin{example}
\label{ex:nullconeNotZariskiClosed}
Let $\mathcal{G}$ be the TDAG $1 \to 3 \leftarrow 2$, with an unshielded collider. The corresponding group $G(\mathcal{G})$ consists of invertible matrices 
\[ g = \begin{bmatrix} * & 0 & 0 \\ 0 & * & 0 \\ * & * & * \end{bmatrix}.
\]
This is the transpose of the group in Examples~\ref{ex:path1} and~\ref{ex:path2}, but we observe differences between the two models. Since node $3$ has the nodes $1$ and $2$ as parents, Corollary \ref{cor:tdagmlt} tells us that $\mlt(\mathcal{G})= \mltb(\mathcal{G}) = 2 + 1 = 3$ (as opposed to $\mlt=2$ in Example \ref{ex:path2}). 

The null cone is not Zariski closed for $n \geq 3$,
by Corollary~\ref{cor:tdagNC}.
Note that the  Zariski closure of the null cone when $n=3$ is generated by the single equation $\det(Y)$.
We see that the null cone is also not closed for $n=2$,
using Theorem~\ref{thm:tdagnullcone}. Here, row 3 is generically a linear combination of rows 1 and 2, and hence the Zariski closure of the null cone fills the space of tuples. However, for special choices of tuple $Y$, the MLE does exist. For example, let
    \[ Y = \begin{bmatrix} 1 & 0 \\ 1 & 0 \\ 0 & 1 \end{bmatrix} . \]
Rows 1 and 2 are non-zero, and row 3 is not a linear
combination of rows 1 and 2, hence the MLE given $Y$ exists. Since $Y$ is of minimal norm in its orbit, one MLE is $2 I_3$, where $\lambda=2$ minimizes $\frac32 \lambda - 3 \log(\lambda)$, see Proposition~\ref{prop:twoStepGeneral}.
In fact, there are infinitely many MLEs, as follows. For any $g$ in the stabilizer of $Y$ the vector $g \cdot Y$
is also of minimal norm in the orbit. 
Then $\lambda g\T g$ is also an MLE given $Y$, where $\lambda = 2$ as before.
The stabilizer is
\[ 
    \left\lbrace \begin{bmatrix} 1 & 0 & 0 \\ 0 & 1 & 0 \\ t & -t & 1 \end{bmatrix} \colon t \in \RR \right\rbrace \quad
\text{thus} \quad
2 I_3 + 2t \begin{bmatrix} t & -t & 1 \\ -t & t & -1 \\ 1 & -1 & 0 \end{bmatrix}, \quad t \in \RR \quad \text{ are also MLEs.} \]
In fact,  we can verify that these are all  MLEs using Proposition~\ref{prop:twoStepGeneral}.
\hfill $\diamondsuit$
\end{example}

We describe the implications of the above results for undirected Gaussian graphical models, i.e. those coming from graphs with undirected edges, see \cite[Chapter 13]{Sullivant}. A Gaussian graphical model on an undirected graph $\mathcal{G}$ is given by all concentration matrices $\Psi$ such that $\psi_{ij}=0$ whenever the edge $i - j$ is missing from $\mathcal{G}$. A natural question is to determine which undirected Gaussian graphical models are Gaussian group models, i.e. of the form $\mathcal{M}_G$ for some group $G \subseteq \GL_m$. For instance, note that the undirected model corresponding to $1 - 3 - 2$ is the same as the directed model from Example \ref{ex:path1}. We argue that any undirected model that is a Gaussian group model is covered by our study of TDAGs.

We first note that the directed model of any TDAG without unshielded colliders equals the undirected model of its underlying undirected graph, see e.g. \cite[Proposition 4.1]{andersson1997markov}.
Conversely, a necessary condition for an undirected graphical model to be a Gaussian group model can be obtained from \cite[Theorem 2.2]{letac2007wishart}: an undirected Gaussian graphical model is a transformation family if and only if the graph $\mathcal{G}$ has neither $4$-cycles nor $4$-chains as induced subgraphs. There are two consequences of these conditions. One is that there is a way to \emph{direct} the edges in $\mathcal{G}$ so that there are no unshielded colliders.
The other consequence is that this can be done in such a way so that the undirected model coincides with the directed model $\mathcal{M}_\mathcal{G}^{\to}$, and the directed graph must be a TDAG, see page 7 of the supplementary material of \cite{draisma2013groups}.
 In summary, we have the following equivalence.

\begin{remark}
\label{rem:undirectedGraphs}
The undirected graphical models that are Gaussian group models are the TDAG models without unshielded colliders. 
They are exactly those models 
whose sets of tuples of $n$ samples with unbounded likelihood are Zariski closed for all $n$,
by Corollary~\ref{cor:tdagNC}.
\end{remark}

\appendix

\section{Connections to representations of quivers}\label{sec:appendix}
We explain how to deduce Theorem~\ref{thm:king} from the general setting in \cite{King} in terms of representations of quivers. We use the Kronecker quiver $Q$ with two vertices and $n$ arrows:
    \begin{center}
        \begin{tikzcd}
        1  & 2 \ar[l, shift left = 4pt, bend left] \ar[l, draw=none, "\raisebox{+0.7ex}{\vdots}" description] \ar[l, bend right, shift right = 3pt]
        \end{tikzcd}
    \end{center}
An element $Y$ in $V := (\CC^{m_1 \times m_2})^n$ is a finite dimensional representation of $Q$ with dimension vector $\alpha = (m_1,m_2)$. 
We denote such a representation by $(\CC^{m_1}, \CC^{m_2} ; Y)$.
This identifies $V$ with the space $\mathscr{R}(Q,\alpha)$ from \cite{King}. The left-right action of $G := \GL_{m_1}(\CC) \times \GL_{m_2}(\CC)$ on $V$ by $(g_1,g_2) \cdot (Y_i)_i = (g_1 Y_i g_2^{-1})_i$ is the $\GL(\alpha)$ action on $\mathscr{R}(Q,\alpha)$ from \cite{King}. The difference between this action and our left-right action (with $g_2\T$ rather than $g_2^{-1}$) preserves all stability notions.

We consider two closely related group actions. First, we restrict to $H := \SL_{m_1}(\CC) \times \SL_{m_2}(\CC)$. Second, we consider the action of $G$ on $V \times \CC$~by
    \begin{equation*}
        g \cdot (X,z) := (g \cdot X, \chi_{\theta}^{-1}(g)z), \quad\text{ where } \quad
        \chi_{\theta}^{-1}(g) = [\det(g_1)]^{-m_2} [\det(g_2)]^{m_1},
    \end{equation*}
    for $\theta := (m_2, -m_1)$.
The two actions are related as follows.

\begin{lemma}
\label{lem:RelationToKing}
Fix $Y \in V = (\CC^{m_1 \times m_2})^n$ and $z \in \CC^\times$, and set $\hat{Y} := (Y,1) \in V \times \CC$. Then
    \begin{itemize}\itemsep 3pt
        \item[(a)] $(X,z) \in G \cdot \hat{Y} \quad \Leftrightarrow \quad z^{\frac{1}{m_1 m_2}} X \in H \cdot Y$
        \item[(b)] $(X,z) \in \overline{G \cdot \hat{Y}} \quad \Leftrightarrow \quad z^{\frac{1}{m_1 m_2}} X \in \overline{H \cdot Y}$
        \item[(c)] $\left( \exists \, X \in V  \colon (X,0) \in \overline{G \cdot \hat{Y}} \right) \quad \Leftrightarrow \quad 0 \in \overline{H \cdot Y}$. 
    \end{itemize}
\end{lemma}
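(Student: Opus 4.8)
The plan is to establish part (a) by a direct normalization of group elements, and then to obtain parts (b) and (c) from (a) by limiting arguments, with the only genuine care needed around the multivaluedness of the root $z^{1/(m_1 m_2)}$. For (a), I would use that every $g = (g_1,g_2) \in G$ factors as $g_i = c_i h_i$ with $c_i \in \CC^\times$ an $m_i$-th root of $\det(g_i)$ and $h_i := c_i^{-1} g_i \in \SL_{m_i}(\CC)$. Writing $h := (h_1,h_2) \in H$ and $\zeta := c_2/c_1$, a one-line computation gives $g \cdot Y = \zeta^{-1}(h \cdot Y)$ and $\chi_\theta^{-1}(g) = \det(g_1)^{-m_2}\det(g_2)^{m_1} = c_1^{-m_1 m_2} c_2^{m_1 m_2} = \zeta^{m_1 m_2}$, since the $\SL$ factors contribute determinant $1$. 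Hence $g \cdot \hat Y = (\zeta^{-1}(h \cdot Y),\, \zeta^{m_1 m_2})$, and so $(X,z) \in G \cdot \hat Y$ is equivalent to the existence of an $(m_1 m_2)$-th root $\zeta$ of $z$ and an $h \in H$ with $\zeta X = h \cdot Y$. This is exactly the statement $z^{1/(m_1 m_2)} X \in H \cdot Y$, read existentially over the choice of root; the reverse direction simply takes $g = (h_1, \zeta h_2)$.

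For part (b) I would pass to limits in (a). If $(X,z) \in \overline{G \cdot \hat Y}$ with $z \neq 0$, choose $g^{(k)} \in G$ with $g^{(k)} \cdot \hat Y = (X^{(k)}, z^{(k)}) \to (X,z)$; part (a) yields roots $\zeta^{(k)}$ of $z^{(k)}$ with $\zeta^{(k)} X^{(k)} \in H \cdot Y$. Because $z^{(k)} \to z \neq 0$, the finitely many roots of each $z^{(k)}$ remain bounded and bounded away from $0$, so after passing to a subsequence $\zeta^{(k)} \to \zeta$ with $\zeta^{m_1 m_2} = z$, whence $\zeta X \in \overline{H \cdot Y}$. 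Conversely, given $\zeta X \in \overline{H \cdot Y}$ with $\zeta^{m_1 m_2} = z$, pick $h^{(k)} \cdot Y \to \zeta X$ and set $X^{(k)} := \zeta^{-1}(h^{(k)} \cdot Y) \to X$; by (a) each $(X^{(k)}, z) \in G \cdot \hat Y$, so $(X,z) \in \overline{G \cdot \hat Y}$.

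For part (c) the hypothesis $z \neq 0$ is gone, and this is where the argument is least automatic. In the forward direction, writing $g^{(k)} \cdot \hat Y = (X^{(k)}, z^{(k)}) \to (X,0)$ and applying (a), I get $\zeta^{(k)} X^{(k)} \in H \cdot Y$ with $|\zeta^{(k)}| = |z^{(k)}|^{1/(m_1 m_2)} \to 0$; since $\|X^{(k)}\|$ is bounded, $\zeta^{(k)} X^{(k)} \to 0$ and thus $0 \in \overline{H \cdot Y}$. In the reverse direction, given $h^{(k)} \cdot Y \to 0$, I would scale only the second factor, setting $g^{(k)} := (h_1^{(k)}, t_k h_2^{(k)})$, so that $g^{(k)} \cdot Y = t_k^{-1}(h^{(k)} \cdot Y)$ and $\chi_\theta^{-1}(g^{(k)}) = t_k^{m_1 m_2}$. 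Taking $t_k := \| h^{(k)} \cdot Y \|^{1/2}$ (or any fixed nonzero $t_k \to 0$ on the subsequence where $h^{(k)} \cdot Y = 0$) drives both coordinates to $0$, giving $(0,0) \in \overline{G \cdot \hat Y}$, so $X = 0$ witnesses the claim.

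The main obstacle I anticipate is bookkeeping the $(m_1 m_2)$-th root consistently across all three parts: making the existential reading of $z^{1/(m_1 m_2)}$ precise in (a), extracting a convergent subsequence of roots in (b) using $z \neq 0$, and calibrating the scalar $t_k$ in (c) so that the first coordinate stays controlled while the character is forced to zero. None of these steps is deep, but the equivalences read cleanly only if the root ambiguity is tracked uniformly throughout.
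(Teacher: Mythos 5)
Your proof is correct and follows essentially the same route as the paper: part (a) by normalizing determinants to pass between $G$ and $H = \SL_{m_1}\times\SL_{m_2}$, part (b) by taking limits in (a), and part (c) by rescaling a single factor so that both the orbit point and the character tend to zero (the paper instead rescales via $c_k := \|Y_k\|^{m_1m_2/2}$ and invokes (a), which amounts to the same computation). Your explicit existential reading of the $(m_1 m_2)$-th root of $z$, and the subsequence argument extracting a convergent root in (b), handle a genuine ambiguity that the paper's terser proof leaves implicit.
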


\begin{proof}
To prove (a), take $g \in G$ with $(X,z) = g \cdot \hat{Y}$. Then $[\det(g_1)]^{-m_2} [\det(g_2)]^{m_1} = z$ and $g \cdot Y = X$. Set $h := \big( \det(g_1)^{-\frac{1}{m_1}} g_1, \, \det(g_2)^{-\frac{1}{m_2}} g_2 \big) \in H$ to obtain $h \cdot Y = z^{\frac{1}{m_1 m_2}} X$. 
Conversely, given the latter for some $h = (h_1, h_2) \in H$, we define $g := \big( z^{-\frac{1}{m_1 m_2}} h_1, \, h_2 \big)$ to yield $g \cdot \hat{Y} = (X,z)$.
Part~(b) follows from applying (a) to a sequence in the respective orbit that tends to a point in the orbit closure.

For $Y=0$ we have $(0,0) \in \overline{G \cdot \hat{Y}}$ and $0 \in \overline{H \cdot Y}$. It remains to consider $Y \neq 0$. Take $X \in V$ and let $g^{(k)} \in G$ be a sequence such that $g^{(k)} \cdot \hat{Y}$ tends to $(X,0)$ as $k \to \infty$. 
Since $\chi_{\theta}^{-1}(g^{(k)}) \neq 0$ for all $k$, we apply (a) to obtain $
        Y_k := \left[ \chi_{\theta}^{-1}(g^{(k)}) \right]^{\frac{1}{m_1 m_2}} g^{(k)} \cdot Y \in H \cdot Y $
for all~$k$. With $g^{(k)} \cdot \hat{Y} \to (X,0)$ for $k \to \infty$ we conclude that the sequence $Y_k$ tends to $0 \in V$. On the other hand, assume there exist $Y_k \in H \cdot Y$ with $Y_k \to 0$ as $k \to \infty$. Since $Y \neq 0$, we have $Y_k \neq 0$ and hence $c_k := \| Y_k \|^{\frac{m_1 m_2}{2}} \neq 0$ for all $k$. Thus, setting 
    $X_k := c_k^{-\frac{1}{m_1 m_2}} Y_k$
and applying (a) gives $(X_k, c_k) \in G \cdot \hat{Y}$. The latter sequence tends to $(0,0) \in V \times \CC$ by the choice of $c_k$.
\end{proof}

With the help of Lemma~\ref{lem:RelationToKing} we prove Theorem~\ref{thm:king}.

\begin{proof}[Proof of Theorem~\ref{thm:king}]
The equivalence of (a) and (b) is Theorem~\ref{thm:MLEversusStabilityComplex}.
It remains to prove the equivalence of (b) and (c).
Recall that $\theta = (m_2, -m_1)$. By \cite[Proposition~3.1]{King} the matrix tuple $Y = (Y_1,\ldots,Y_n)$ is $\chi_\theta$-stable if and only if the representation $(\CC^{m_1}, \CC^{m_2}; Y)$ is $\theta$-stable. First, we show that the former is equivalent to being stable under the action of $H$. Then we rephrase the latter as the shrunk subspace condition~(c).

Set $\Delta := \lbrace (t I_{m_1}, t I_{m_2}) \mid t \in \CC^\times \rbrace$ and let $G_{\hat{Y}}$ denote the $G$-stabilizer of $\hat{Y} = (Y,1)$. The tuple $Y$ is $\chi_\theta$-stable if and only if the orbit $G \cdot \hat{Y}$ is closed and the group $G_{\hat{Y}}/\Delta$ is finite, by \cite[Lemma~2.2]{King}. The group $G_{\hat{Y}} / \Delta$ is finite if and only if $H_Y$ is finite, since
the group morphism
    \begin{equation*}
        \varphi \colon G_{\hat{Y}} \to H_Y, \quad
        (g_1,g_2) \mapsto \left( \det(g_1)^{-\frac{1}{m_1}} g_1, \, \det(g_2)^{-\frac{1}{m_2}} g_2 \right)
    \end{equation*}
induces an isomorphism $G_{\hat{Y}} / \Delta \cong H_Y$. For $Y \neq 0$, we show that $G \cdot \hat{Y}$ is closed if and only if $H \cdot Y$ is closed, as follows. If $G \cdot \hat{Y}$ is closed and $X \in \overline{H \cdot Y}$, then $(X,1) \in \overline{G \cdot \hat{Y}} = G \cdot \hat{Y}$ using Lemma~\ref{lem:RelationToKing}(b), and hence $X \in H \cdot Y$ by Lemma~\ref{lem:RelationToKing}(a). Conversely, if $H \cdot Y$ is closed with $Y \neq 0$ then $0 \notin \overline{H \cdot Y}$. Thus, Lemma~\ref{lem:RelationToKing}(c) yields $\overline{G \cdot \hat{Y}} \cap \big( V \times \lbrace 0 \rbrace \big) = \emptyset$. Hence any $(X,z) \in \overline{G \cdot \hat{Y}}$ must satisfy $z \in \CC^\times$ and we conclude that
$(X,z) \in G \cdot \hat{Y}$ using Lemma~\ref{lem:RelationToKing}.

For $\theta$-stability, $(\CC^{m_1}, \CC^{m_2}; Y)$ is viewed as an element of the category of finite dimensional representations of the Kronecker quiver $Q$. We note that $\langle \theta, (m_1,m_2) \rangle = 0$ is satisfied by our choice $\theta = (m_2, -m_1)$. We specialize \cite[Definition~1.1]{King} to our representation $(\CC^{m_1}, \CC^{m_2}; Y)$ of the Kronecker quiver $Q$. The representation is $\theta$-semistable if and only if
for all subrepresentations of $(\CC^{m_1}, \CC^{m_2}; Y)$, i.e. all subspaces $V_1 \subseteq \CC^{m_1}$, $V_2 \subseteq \CC^  {m_2}$ such that $Y_i V_2 \subseteq V_1$ for all $i$, we have
    \begin{equation}\label{eq:thetaStable}
        \langle \theta, (\dim V_1, \dim V_2) \rangle = m_2 \dim V_1 - m_1 \dim V_2 \geq 0.
    \end{equation}
The representation $(\CC^{m_1}, \CC^{m_2}; Y)$ is $\theta$-stable if and only if
in addition, the inequality in~\eqref{eq:thetaStable} is strict for all non-zero proper subrepresentations. Here, non-zero means $V_1 \neq 0$ or $V_2 \neq 0$, while proper means $V_1 \subsetneq \CC^{m_1}$ or $V_2 \subsetneq \CC^{m_2}$. Since $V_1 \neq 0$ and $V_2 = 0$ gives strict inequality in \eqref{eq:thetaStable}, it is enough to consider $V_2 \neq 0$. On the other hand, strict inequality in \eqref{eq:thetaStable} holds for all proper subrepresentations satisfying $V_1 \subsetneq \CC^{m_1}$ and $V_2 = \CC^{m_2}$ if and only if there is \emph{no} proper subrepresentation of this form, i.e. if and only if $\mathrm{rank}(Y_1,\ldots,Y_n) = m_1$. Hence, by requiring the latter condition we can restrict to the case $V_2 \subsetneq \CC^{m_2}$. All together, we rephrased the $\theta$-stability of $(\CC^{m_1}, \CC^{m_2}; Y)$ as in the statement.
\end{proof}

\begin{remark}
\label{rem:kingSemistable}
Proposition~3.1 in \cite{King} provides an alternative proof of the complex analog of Theorem~\ref{thm:nullconeLeftRight}, i.e. \cite[Proposition~2.1]{BurginDraisma}.
It  states that $Y$ is $\chi_\theta$-semistable if and only if $(\CC^{m_1}, \CC^{m_2}; Y)$ is $\theta$-semistable. The former holds if and only if 
    \begin{equation*}
        \big( V \times \lbrace 0 \rbrace \big) \cap \overline{G \cdot \hat{Y}} \neq \emptyset,
    \end{equation*}
i.e. if and only if $Y$ is semistable under the action of $H$, by Lemma~\ref{lem:RelationToKing}.
On the other hand, the proof of Theorem~\ref{thm:king} shows that $(\CC^{m_1}, \CC^{m_2}; Y)$ is $\theta$-semistable if and only if \eqref{eq:thetaStable} holds for all subspaces $V_1 \subseteq \CC^{m_1}$, $V_2 \subseteq \CC^{m_2}$ satisfying $Y_i V_2 \subseteq V_1$ for all $i = 1,\ldots,n$.
\end{remark}

\bigskip

{\small
\paragraph{\textbf{Acknowledgements}}
We are grateful to 
Peter B\"urgisser, Mathias Drton, Bernd Sturmfels, and Michael Walter
for fruitful discussions.
We also thank
Jan Draisma,
Visu Makam,
Nikolay Nikolov,
Panagiotis Papazoglou,
Piotr Zwiernik and the anonymous referees
for useful hints and suggestions.
CA was partially supported by the Deutsche Forschungsgemeinschaft (DFG) in the context of the Emmy Noether junior research group KR 4512/1-1.
KK was
partially supported by the Knut and Alice Wallenberg Foundation within their WASP
(Wallenberg AI, Autonomous Systems and Software Program) AI/Math initiative. 
Research of PR is funded by the European Research Council (ERC) under the European’s Horizon 2020 research and innovation programme (grant agreement no. 787840).
}

\bibliographystyle{alpha}
\bibliography{literature}

\newcommand{\etalchar}[1]{$^{#1}$}
\begin{thebibliography}{AZGL{\etalchar{+}}18}

\bibitem[AHSE95]{complexG}
H.H. Andersen, M.~Hojbjerre, D.~Sorensen, and P.S. Eriksen.
\newblock {\em Linear and graphical models: for the multivariate complex normal
  distribution}, volume 101.
\newblock Springer Science \& Business Media, 1995.

\bibitem[AKRS20]{companion}
Carlos Am\'endola, Kathl\'en Kohn, Philipp Reichenbach, and Anna Seigal.
\newblock Toric invariant theory for maximum likelihood estimation in
  log-linear models.
\newblock arXiv:2012.07793, 2020.

\bibitem[AMP97]{andersson1997markov}
S.A. Andersson, D.~Madigan, and M.D. Perlman.
\newblock On the {M}arkov equivalence of chain graphs, undirected graphs, and
  acyclic digraphs.
\newblock {\em Scand. J. Stat.}, 24(1):81--102, 1997.

\bibitem[AZGL{\etalchar{+}}18]{Allen-Zhu}
Z.~Allen-Zhu, A.~Garg, Y.~Li, R.~Oliveira, and A.~Wigderson.
\newblock Operator scaling via geodesically convex optimization, invariant
  theory and polynomial identity testing.
\newblock In {\em S{TOC}'18--{P}roceedings of the 50th {A}nnual {ACM} {SIGACT}
  {S}ymposium on {T}heory of {C}omputing}, pages 172--181. ACM, New York, 2018.

\bibitem[BD06]{BurginDraisma}
M.~B{\"u}rgin and J.~Draisma.
\newblock The {H}ilbert null-cone on tuples of matrices and bilinear forms.
\newblock {\em Math. Z.}, 254(4):785--809, 2006.

\bibitem[BFG{\etalchar{+}}19]{PeterAvi3}
P.~B\"urgisser, C.~Franks, A.~Garg, R.~Oliveira, M.~Walter, and A.~Wigderson.
\newblock Towards a theory of non-commutative optimization: geodesic first and
  second order methods for moment maps and polytopes.
\newblock arXiv:1910.12375, 2019.

\bibitem[BGO{\etalchar{+}}17]{PeterAvi1}
P.~B{\"u}rgisser, A.~Garg, R.~Oliveira, M.~Walter, and A.~Wigderson.
\newblock Alternating minimization, scaling algorithms, and the null-cone
  problem from invariant theory.
\newblock arXiv:1711.08039, 2017.

\bibitem[BL17]{RealGIT}
C.~B\"{o}hm and R.A. Lafuente.
\newblock Real geometric invariant theory.
\newblock arXiv:1701.00643, 2017.

\bibitem[BNBJJ82]{exptransfmod}
O.~Barndorff-Nielsen, P.~Blaesild, J.~Ledet Jensen, and B.~J{\o}rgensen.
\newblock Exponential transformation models.
\newblock {\em Proc. Roy. Soc. London Ser. A}, 379(1776):41--65, 1982.

\bibitem[Cra98]{GaussianIPF}
Erhard Cramer.
\newblock Conditional iterative proportional fitting for gaussian
  distributions.
\newblock {\em J. Multivariate Anal.}, 65(2):261--276, 1998.

\bibitem[DFKP19]{drton2019maximum}
M.~Drton, C.~Fox, A.~K{\"a}ufl, and G.~Pouliot.
\newblock The maximum likelihood threshold of a path diagram.
\newblock {\em Ann. Statist.}, 47(3):1536--1553, 2019.

\bibitem[DKH20]{DrtonKuriki}
M.~Drton, S.~Kuriki, and P.~Hoff.
\newblock Existence and {U}niqueness of the {K}ronecker {C}ovariance {MLE}.
\newblock arXiv:2003.06024, 2020.

\bibitem[DKZ13]{draisma2013groups}
J.~Draisma, S.~Kuhnt, and P.~Zwiernik.
\newblock Groups acting on {G}aussian graphical models.
\newblock {\em Ann. Statist.}, 41(4):1944--1969, 2013.

\bibitem[DM17]{DerksenMakam}
H.~Derksen and V.~Makam.
\newblock Polynomial degree bounds for matrix semi-invariants.
\newblock {\em Adv. Math.}, 310:44--63, 2017.

\bibitem[DM20]{derksen2020amaximum}
Harm Derksen and Visu Makam.
\newblock Maximum likelihood estimation for matrix normal models via quiver
  representations.
\newblock arXiv:2007.10206, 2020.

\bibitem[DMW20]{derksen2020bmaximum}
Harm Derksen, Visu Makam, and Michael Walter.
\newblock Maximum likelihood estimation for tensor normal models via castling
  transforms.
\newblock arXiv:2011.03849, 2020.

\bibitem[Dut99]{dutilleul1999mle}
Pierre Dutilleul.
\newblock {The MLE algorithm for the matrix normal distribution}.
\newblock {\em J. Stat. Comput. Simul.}, 64(2):105--123, 1999.

\bibitem[Fis34]{fisher1934}
Ronald~Aylmer Fisher.
\newblock Two new properties of mathematical likelihood.
\newblock {\em Proceedings of the Royal Society of London. Series A, Containing
  Papers of a Mathematical and Physical Character}, 144(852):285--307, 1934.

\bibitem[FM81]{IPFienberg}
Stephen~E Fienberg and Michael~M Meyer.
\newblock Iterative proportional fitting.
\newblock Technical report, Carnegie-Mellon University, Pittsburgh PA,Dept. of
  Statistics, 1981.

\bibitem[GGOW16]{GGOWoperatorScaling}
A.~Garg, L.~Gurvits, R.~Oliveira, and A.~Wigderson.
\newblock A deterministic polynomial time algorithm for non-commutative
  rational identity testing.
\newblock In {\em 57th {A}nnual {IEEE} {S}ymposium on {F}oundations of
  {C}omputer {S}cience---{FOCS} 2016}, pages 109--117. IEEE Computer Soc., Los
  Alamitos, CA, 2016.

\bibitem[Goo63]{goodman1963complex}
N.R. Goodman.
\newblock Statistical analysis based on a certain multivariate complex
  {G}aussian distribution ({A}n introduction).
\newblock {\em Ann. Math. Statist.}, 34(1):152--177, 1963.

\bibitem[Gur04]{GurvitsOperatorScaling}
L.~Gurvits.
\newblock Classical complexity and quantum entanglement.
\newblock {\em J. Comput. System Sci.}, 69(3):448--484, 2004.

\bibitem[Hil93]{hilbert}
D.~Hilbert.
\newblock {\"U}ber die vollen {I}nvariantensysteme.
\newblock {\em Math. Ann.}, 42(3):313--373, 1893.

\bibitem[IQS18]{IQS}
G.~Ivanyos, Y.~Qiao, and K.V. Subrahmanyam.
\newblock Constructive non-commutative rank computation is in deterministic
  polynomial time.
\newblock {\em Comput. Complexity}, 27(4):561--593, 2018.

\bibitem[Kin94]{King}
A.D. King.
\newblock Moduli of representations of finite-dimensional algebras.
\newblock {\em Quart. J. Math. Oxford Ser. (2)}, 45(180):515--530, 1994.

\bibitem[KN79]{KempfNess}
G.~Kempf and L.~Ness.
\newblock The length of vectors in representation spaces.
\newblock In {\em Algebraic geometry ({P}roc. {S}ummer {M}eeting, {U}niv.
  {C}openhagen, {C}openhagen, 1978)}, volume 732 of {\em Lecture Notes in
  Math.}, pages 233--243. Springer, Berlin, 1979.

\bibitem[Lau96]{Lauritzen}
S.~Lauritzen.
\newblock {\em Graphical models}, volume~17.
\newblock Clarendon Press, 1996.

\bibitem[LM07]{letac2007wishart}
G.~Letac and H.~Massam.
\newblock Wishart distributions for decomposable graphs.
\newblock {\em Ann. Statist.}, 35(3):1278--1323, 2007.

\bibitem[LZ05]{lu2005likelihood}
N.~Lu and D.L. Zimmerman.
\newblock The likelihood ratio test for a separable covariance matrix.
\newblock {\em Stat. Probab. Lett.}, 73(4):449--457, 2005.

\bibitem[MBT14]{MLEoptimization}
A.T. Mai, F.~Bastin, and M.~Toulouse.
\newblock On optimization algorithms for maximum likelihood estimation.
\newblock CIRRELT 2014-64, 2014.

\bibitem[Mil17]{MilneAlgGroups}
J.S. Milne.
\newblock {\em Algebraic groups}, volume 170 of {\em Cambridge Studies in
  Advanced Mathematics}.
\newblock Cambridge University Press, Cambridge, 2017.

\bibitem[Mos55]{Mostow}
G.D. Mostow.
\newblock Self-adjoint groups.
\newblock {\em Ann. of Math. (2)}, 62:44--55, 1955.

\bibitem[Myu03]{MLEtutorial}
I.J. Myung.
\newblock Tutorial on maximum likelihood estimation.
\newblock {\em J. Math. Psych.}, 47(1):90--100, 2003.

\bibitem[RS90]{RichardsonSlodowy}
R.W. Richardson and P.J. Slodowy.
\newblock Minimum vectors for real reductive algebraic groups.
\newblock {\em J. London Math. Soc. (2)}, 42(3):409--429, 1990.

\bibitem[Sin64]{sinkhorn1964}
R.~Sinkhorn.
\newblock A relationship between arbitrary positive matrices and doubly
  stochastic matrices.
\newblock {\em Ann. Math. Statist.}, 35(2):876--879, 1964.

\bibitem[Sul18]{Sullivant}
S.~Sullivant.
\newblock {\em {Algebraic Statistics}}, volume 194 of {\em Graduate Studies in
  Mathematics}.
\newblock AMS, 2018.

\bibitem[Wal17]{Wallach}
N.R. Wallach.
\newblock {\em Geometric invariant theory: Over the real and complex numbers}.
\newblock Universitext. Springer, 2017.

\bibitem[Woo56]{complexGaussian}
R.A. Wooding.
\newblock The multivariate distribution of complex normal variables.
\newblock {\em Biometrika}, 43(1/2):212--215, 1956.

\end{thebibliography}

\bigskip\bigskip

\noindent
\footnotesize {\bf Authors' addresses:}

\smallskip 

\noindent
\noindent Technische Universit\"at M\"unchen, Germany,
\hfill {\tt carlos.amendola@tum.de}

\noindent KTH Royal Institute of Technology, Sweden,
\hfill {\tt kathlen@kth.se}

\noindent  Technische Universit\"at Berlin, Germany,
\hfill {\tt reichenbach@tu-berlin.de}

\noindent  University of Oxford, United Kingdom,
\hfill {\tt seigal@maths.ox.ac.uk}

\end{document}